\definecolor{blue2}{rgb}{0.67, 0.9, 0.93}
\colorlet{siaminlinkcolor}{green!50!black}
\colorlet{siamexlinkcolor}{red!50!black}
\colorlet{siamreviewcolor}{black!50}
\numberwithin{equation}{section}
\newtheorem{theorem}{Theorem}[section]
\newtheorem{lemma}[theorem]{Lemma}
\newtheorem{proposition}[theorem]{Proposition}
\newtheorem{corollary}[theorem]{Corollary}
\newtheorem*{remark}{Remarks}
\newcommand{\R}{{\mathbb R}}
\renewcommand{\d}{{\mathrm d}}
\DeclareMathOperator{\sech}{sech}
\title{\sc Soliton Amplification in the Korteweg-de Vries Equation by Multiplicative Forcing}
\author[1]{R.W.S. Westdorp \thanks{\tt \href{mailto:r.w.s.westdorp@math.leidenuniv.nl}{r.w.s.westdorp@math.leidenuniv.nl}}}
\author[1]{H.J. Hupkes \thanks{\tt \href{mailto:hhupkes@math.leidenuniv.nl}{hhupkes@math.leidenuniv.nl}}}
\affil[1]{\small Mathematisch Instituut, Universiteit Leiden, P.O. Box 9512, 2300 RA Leiden, The Netherlands}
\begin{document}

\maketitle
%\todo[color=yellow]{Consider \textbf{Diffusion-driven} propagation reversal...}
\begin{abstract}
We study the stability and dynamics of solitons  in the Korteweg-de Vries (KdV) equation with small multiplicative forcing. Forcing breaks the conservative structure of the KdV equation, leading to substantial changes in energy over long times. We show that, for small forcing, the inserted energy is almost fully absorbed by the soliton, resulting in a drastically changed amplitude and velocity. We decompose the solution to the forced equation into a modulated soliton and an infinite dimensional perturbation. Assuming slow exponential decay of the forcing, we show that the perturbation decays at the same exponential rate in a weighted Sobolev norm centered around the soliton.
\end{abstract}

\smallskip
\noindent\textbf{Keywords:}  traveling waves; Korteweg-de Vries equation; stability; forcing.

\smallskip
\noindent\textbf{MSC 2020:} 35Q53, 35C08

%%%%%%%%%%%%%%%%%%%%%%%%%%%%%%%%%%%%%%%%%%%%%%%%%%%%%%
%                   6. BODY
%%%%%%%%%%%%%%%%%%%%%%%%%%%%%%%%%%%%%%%%%%%%%%%%%%%%%%

% Only the first word and proper nouns of section titles should be capitalized.
% The title of section 1:
\section{Introduction}

We study the forced Korteweg-de Vries equation 
\begin{align}u_t=-\partial_x^3u-2u\partial_x u+\epsilon f(\epsilon t/E) u,\label{eqn:forcedkdv}\end{align}
where $u$ is a real-valued function on $(t,x)\in(\R^+,\R)$ and $f$ is an integrable time-dependent forcing term. The small parameter $\epsilon>0$ controls the amplitude of the forcing, while the (potentially large) parameter $E > 0$ is a measure for the total supplied energy. Our main goal is to understand the effect of this forcing on the family of soliton solutions to the unforced system.

Forced KdV equations such as \eqref{eqn:forcedkdv} appear in the study of wave-phenomena subject to external disturbing mechanisms. 
Motivated by physical considerations (such as pressure
inhomogeneities or bottom topographies),
various types of forcing have been considered; see for instance
%Various types of forcing arise from a physical application (i.e. pressure inhomogeneity or bottom topography), see for instance 
\cite{pelinovsky, physics1, physics2, physics3, physics4, physics5, physics6}. The multiplicative form of the forcing term in \eqref{eqn:forcedkdv} can be thought of 
as a generic mechanism to modify the amount of energy
present in the system.
%insert energy into the system.
%
%as a model for a
%modelling a 
%generic energy inserting mechanism. 
For our purposes, \eqref{eqn:forcedkdv} constitutes a toy model that facilitates the rigorous study of perturbed waved phenomena. In particular, we view this work as a step towards establishing rigorous long-time stability of the KdV solitary waves under stochastic forcing, extending the preliminary results %as studied 
in \cite{westdorp}.

In the absence of forcing ($f(t)\equiv 0$), \eqref{eqn:forcedkdv} is the well-known KdV equation: a well-studied dispersive PDE that first appeared in the description of shallow water waves in a longitudinal canal \cite{korteweg}. Among its most notable features is the existence of soliton solutions $u(t,x)=\phi_c(x-ct)$
of the form
\begin{align}\phi_c(x) = \tfrac{3c}{2} \sech^2(\sqrt{c}x/2), \quad c > 0,\label{eqn:soliton}\end{align}
which mark a balance between dispersive and nonlinear effects. As seen in \eqref{eqn:soliton}, the solitary waves $\phi_c$ satisfy the self-similarity property 
$\phi_c(x)=c\phi_1(\sqrt{c}x)$, owing to the scaling invariance
\begin{align}
    u(t,x)\mapsto \alpha^2 u(\alpha^3 t,\alpha x)\label{eqn:symmetry}
\end{align}
of the KdV equation.

Another celebrated quality of the KdV equation is that it is completely integrable, which means that it enjoys an infinite amount of conserved quantities. In particular, the KdV flow conserves the $L^2$-norm  
\[\mathcal{N}[u]=\int_{\R}u^2\d x\]
and the Hamiltonian
\[\mathcal{H}[u]=\int_{\R}\tfrac{1}{2}(\partial_x u)^2-\tfrac{1}{3}u^3 \d x.\]
Introducing the forcing term in \eqref{eqn:forcedkdv} breaks this conservative structure. The $L^2$-norm, for instance, evolves as
\begin{align}\mathcal{N}[u(t)]=\mathcal{N}[u(0)]e^{2E\int_0^{\epsilon t/E}f(s)\d s}.\label{eqn:energyevo}\end{align}
With $\mathcal{N}$, $\mathcal{H}$, and other KdV-invariants undergoing slow (but eventually large) changes, we may expect significant consequences for the propagation of the solitons \eqref{eqn:soliton}. Using \eqref{eqn:energyevo} and the relation $\mathcal{N}[\phi_c]=6c^{3/2}$, we can heuristically predict that the amplitude of a soliton starting at $c(0)>0$ will approximately evolve according to
%\[c^{3/2}(t)\approx c_*^{3/2}e^{2E\int_0^{\epsilon t/E} f(s)\d s}. \]
\begin{align}c_{\mathrm{ap}}(t)=c(0)e^{\frac{4}{3}E\int_0^{\epsilon t/E} f(s) \d s}.\label{eqn:leadingorderc}\end{align}
We will show that this description is valid to leading-order in the small parameter $\epsilon$. If, for instance, $f(t)=e^{- t}$ and $E = \tfrac{3}{4}\ln 2$, then the soliton amplitude roughly doubles in size over time. Letting $c(t)$ denote the evolution of the soliton amplitude over time, we furthermore derive that the soliton phase, starting at a position $\xi(0)\in \R$, evolves according to
 \begin{align} \xi_{\mathrm{ap}}(t)=\xi(0)+\int_0^t c(s) \d s+\tfrac{2}{3}\epsilon \int_0^t \frac{f(\epsilon s/E)}{c^{1/2}(s)} \d s,\label{eqn:leadingorderxi}\end{align}
to leading-order in $\epsilon$.

In this work, we establish the orbital stability of the traveling-wave family \eqref{eqn:soliton} under the influence of multiplicative forcing. We show that solitons evolving via \eqref{eqn:forcedkdv} remain close to the family \eqref{eqn:soliton} in the $H^1$-norm, while undergoing a potentially large change in amplitude. In particular, we supply \eqref{eqn:forcedkdv} with the initial condition 
\begin{align}u(0,x)=\phi_{c_*}(x)+\overline{v}_*(x) \label{eqn:initial}\end{align} 
for some $c_*>0$, where $\overline{v}_*\in H^2$ and $e^{w x}\overline{v}_*\in H^1$ are suitably small  for some weight $w\in(0,\sqrt{c_*}/3)$. If the forcing term $f$ is assumed to be exponentially decaying, then the solution actually converges to a limiting wave profile in $H^1$ with an exponential weight centered around the soliton. Our main interest in pursuing this line of work is to open up rigorous stability results for solitons undergoing large amplitude changes. As such, we view this work as a step towards understanding the stability of solitons under more general perturbations, such as stochastic forcing \cite{westdorp}, as well as the stability of KdV-like quasi-solitons such as micropterons and nanopterons in systems/lattices with periodic structure \cite{faver, mcginnis, faverripples, hoffman, faverwright}. Indeed, these gradually decrease in amplitude over time when perturbed due to the presence of small oscillatory tails that interact with the perturbation.

\subsection*{Soliton stability}

Stability of the soliton family \eqref{eqn:soliton} under small initial perturbations in the KdV equation has long been established in various forms  \cite{bona, pegoweinstein, merle}. The pioneering work \cite{bona} by Bona, Souganidis and Strauss proves that the soliton family \eqref{eqn:soliton} is orbitally stable in $H^1$ via energy methods. Pego and Weinstein expand on this result in \cite{pegoweinstein} by showing that, up to a small change in the speed and the phase of the soliton, small perturbations decay when measured in the exponentially weighted spaces
\[L^2_{w}(\R)=\{g: e^{wx}g\in L^2(\R) \} \quad \text{with} \quad \|g\|_{L^2_w}=\|e^{wx}g\|_{L^2}\]
and
\begin{align}
    H^1_{w}(\R)=\{g: e^{wx}g\in H^1(\R) \} \quad \text{with} \quad \|g\|_{H^1_w}=\|e^{wx}g\|_{H^1}\label{eqn:H1a}\end{align}
for $w\in (0,\sqrt{c_*/3})$. 

Our main theorem generalizes this classic stability result to the setting of \eqref{eqn:forcedkdv}. The main new feature is that we are able to track the amplitude and phase changes introduced by the forcing, which can be of arbitrary size.

\begin{theorem}[See \Cref{sec:proofmain}]\label{thm:bigthm}
Pick $c_*,E_{\mathrm{max}}>0$, $w\in(0,\sqrt{c_*}/3)$, and $p\in[0,\tfrac{1}{4})$. There exist a weight $w_{\infty}\in(0,w)$  and constants $\delta_1,C_1,c_{\mathrm{min}},c_{\mathrm{max}}>0$ with $c_{\mathrm{min}}<c_*<c_{\mathrm{max}}$ such that the following holds true. For each $ E\in (0, E_{\mathrm{max}}]$, each $\epsilon\in (0,1]$ that satisfies
\[ \epsilon^{p}+\epsilon^{1-4p}\leq \delta_1/E, \quad \epsilon^{1-p}\leq E , \quad \epsilon\leq \delta_1 E,\]
each $\overline{v}_*\in H^2\cap H^1_w$ for which $
\|\overline{v}_*\|^2_{H^1}+\|\overline{v}_*\|^2_{H^1_w}\leq \delta_1\epsilon/E$,
and each continuous function $f:\R^+\to \R$ that satisfies the bound
\begin{align}
|f(t)|\leq & e^{- t}, \quad t\geq 0,\label{eqn:fassumption}\end{align}
there exist modulation functions $c,\xi \in C^1(\R^+;\R)$ associated to the solution $u$ of \eqref{eqn:forcedkdv} with \eqref{eqn:initial} that satisfy the following properties:
\begin{enumerate}
\item In the weighted space $H^1_{w_\infty}$, we have the exponential decay
%We have the bound 
\[\sup_{t \geq 0}e^{\epsilon t/E
}\|u(t,\cdot+\xi(t))-\phi_{c(t)}\|_{H^1_{w_\infty}}\leq C_1 \Big(\epsilon^{1-2p}+\|\overline{v}_*\|_{H^1}+\|\overline{v}_*\|_{H^1_w}\Big).\]
    \item In the unweighted space $H^1$, we have the stability bound
    %It holds that 
    \begin{align*}\sup_{t \geq 0}\|u(t,\cdot+\xi(t))-\phi_{c(t)}\|^2_{H^1}\leq & C_1 E\Big(\epsilon^{1-4p}+\|\overline{v}_*\|_{H^1}+\|\overline{v}_*\|_{H^1_w}\Big)\\
    &+C_1\frac{E}{\epsilon}\big(\|\overline{v}_*\|^2_{H^1}+\|\overline{v}_*\|^2_{H^1_w}\big).\end{align*}
    \item The amplitude function $c(t)$ takes values in $[c_{\mathrm{min}},c_{\mathrm{max}}]$, and can be approximated by
    \begin{align*}\sup_{t \geq 0}\big|c(t)\!-\!c_{\mathrm{ap}}(t)\big|
    \leq  C_1\|\overline{v}_*\|_{H^1_w}\!+\!C_1E\big(\epsilon^{1-4p}\!+\!\|\overline{v}_*\|_{H^1}\big)
    \!+\!C_1\frac{E}{\epsilon}\big(\|\overline{v}_*\|^2_{H^1}+\|\overline{v}_*\|^2_{H^1_w}\big),\end{align*}
    where $c_{\mathrm{ap}}(t)$ is defined in \eqref{eqn:leadingorderc}. Moreover, $c(t)$ converges as $t\to \infty$.
    \item The position function $\xi(t)$ satisfies \begin{align*}\sup_{t \geq 0}\left|\xi(t)\!-\!\xi_{\mathrm{ap}}(t)\right|
    \leq  C_1\|\overline{v}_*\|_{H^1_w}\!+\!C_1E\big(\epsilon^{1-4p}\!+\!\|\overline{v}_*\|_{H^1}\big)
    \!+\!C_1\frac{E}{\epsilon}\big(\|\overline{v}_*\|^2_{H^1}+\|\overline{v}_*\|^2_{H^1_w}\big),\end{align*}
    where $\xi_{\mathrm{ap}}(t)$ is defined in \eqref{eqn:leadingorderxi}.
\end{enumerate} 

\end{theorem}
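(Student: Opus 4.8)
The plan is to combine a modulational decomposition with two complementary a~priori estimates — a coercive Lyapunov functional in the unweighted space $H^1$ in the spirit of Bona, Souganidis and Strauss, and a linearized‑semigroup estimate in the exponentially weighted space $H^1_{w_\infty}$ in the spirit of Pego and Weinstein — while tracking the forcing‑induced drift in the conserved quantities $\mathcal{N}$ and $\mathcal{H}$ explicitly throughout. \textbf{First}, I would write $u(t,x) = \phi_{c(t)}(x-\xi(t)) + v(t,x-\xi(t))$ and fix $c(t),\xi(t)$ by imposing two orthogonality conditions on $v$ (against $\partial_y\phi_{c(t)}$ and $\partial_c\phi_{c(t)}$, or their adjoint‑eigenfunction counterparts adapted to the weight), so that $v$ lives in the spectral complement of the generalized kernel of the linearization. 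An implicit‑function‑theorem argument makes $c,\xi\in C^1$ well defined as long as $\|v(t)\|_{H^1}$ stays below a fixed threshold and $c(t)$ stays in a fixed compact interval; this is where $c_{\mathrm{min}},c_{\mathrm{max}}$ and the admissibility of a single weight $w_\infty<\sqrt{c_{\mathrm{min}}}/3$ for the whole amplitude range enter.

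\textbf{Next}, substituting the ansatz into \eqref{eqn:forcedkdv} and passing to the comoving frame $y=x-\xi(t)$ yields $\partial_t v = \mathcal{A}_{c(t)} v + \mathcal{Q}(v) + \epsilon f(\epsilon t/E)v + \epsilon f(\epsilon t/E)\Pi_{c(t)}^\perp\phi_{c(t)} + (\text{modulation remainders})$, where $\mathcal{A}_c = -\partial_y^3 - 2\partial_y(\phi_c\,\cdot) + c\partial_y$ is the linearization, $\mathcal{Q}(v) = -2v\partial_y v$ is quadratic, and $\Pi_c^\perp$ projects off the generalized kernel. Simultaneously one reads off the modulation ODEs, which take the form $\dot c = \tfrac{4}{3}\,\epsilon f(\epsilon t/E)\,c + \langle\mathcal{Q}(v)+\epsilon f v,\psi_1\rangle$ and $\dot\xi = c + \tfrac{2}{3}\,\epsilon f(\epsilon t/E)\,c^{-1/2} + \langle\mathcal{Q}(v)+\epsilon f v,\psi_2\rangle$ for exponentially localized $\psi_1,\psi_2$; the leading terms are exactly those that integrate to $c_{\mathrm{ap}}$ and $\xi_{\mathrm{ap}}$ in \eqref{eqn:leadingorderc}--\eqref{eqn:leadingorderxi}.

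\textbf{The core of the argument} is a continuation/bootstrap that closes the two a~priori estimates simultaneously. For the unweighted bound I would use $\mathcal{E}_c[v] = \mathcal{H}[\phi_c+v]-\mathcal{H}[\phi_c] + \tfrac{c}{2}(\mathcal{N}[\phi_c+v]-\mathcal{N}[\phi_c]) \gtrsim \|v\|_{H^1}^2$ on the orthogonal complement; differentiating along the flow, the KdV part contributes nothing, the forcing contributes $\epsilon|f(\epsilon t/E)|$ times a linear functional of $v$ with exponentially localized kernel plus quadratic remainders, and $\dot c$ contributes $\dot c\,\partial_c\mathcal{E}_c[v]$, so that integrating in time and using $\int_0^\infty|f(\epsilon s/E)|\,ds\le E/\epsilon$ produces the stated $H^1$ bound. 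For the weighted bound I would invoke the exponential decay $\|e^{t\mathcal{A}_c}\Pi_c^\perp\|_{H^1_{w_\infty}\to H^1_{w_\infty}}\lesssim e^{-bt}$, uniform for $c\in[c_{\mathrm{min}},c_{\mathrm{max}}]$ (the heart being that conjugation by $e^{w_\infty y}$ pushes the essential spectrum of $\mathcal{A}_c$ strictly into $\mathrm{Re}<0$ for $w_\infty<\sqrt{c_{\mathrm{min}}}/3$, and the discrete spectrum is killed by $\Pi_c^\perp$), handle the slow time‑dependence of $c(t)$ by a frozen‑coefficient Duhamel argument, and estimate the sources $\|\epsilon f\,\Pi_c^\perp\phi_c\|_{H^1_{w_\infty}}\lesssim \epsilon e^{-\epsilon t/E}$, $\|\mathcal{Q}(v)\|_{H^1_{w_\infty}}\lesssim \|v\|_{H^1}\|v\|_{H^1_{w_\infty}}$, and the modulation remainders. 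Since $b\gg\epsilon/E$, the convolution $\int_0^t e^{-b(t-s)}\epsilon e^{-\epsilon s/E}\,ds\lesssim\tfrac{\epsilon}{b}e^{-\epsilon t/E}$ transfers the slow forcing decay onto $v$, giving $\sup_{t\ge0}e^{\epsilon t/E}\|v(t)\|_{H^1_{w_\infty}}\lesssim \epsilon^{1-2p}+\|\overline v_*\|_{H^1}+\|\overline v_*\|_{H^1_w}$ after absorbing the feedback. The two estimates are coupled — the weighted nonlinear estimate needs the $L^\infty\hookleftarrow H^1$ control from the unweighted bound — so I would assume both bounds with enlarged constants and $c(t)\in[c_{\mathrm{min}},c_{\mathrm{max}}]$ on a maximal interval $[0,T)$, re‑derive them with the original constants, and conclude $T=\infty$; the smallness hypotheses on $\epsilon,\|\overline v_*\|,E$ are precisely what is needed to absorb the feedback terms.

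\textbf{Finally}, with $v$ controlled I would integrate the modulation ODEs: $c(t)-c_{\mathrm{ap}}(t)$ is the time integral of $\langle\mathcal{Q}(v)+\epsilon f v,\psi_1\rangle$ plus the discrepancy from replacing $c$ by $c_{\mathrm{ap}}$ in the leading term, and bounding these by the norms above (using $\langle\mathcal{Q}(v),\psi_1\rangle\lesssim\|v\|_{L^\infty}\|v\|_{L^2_{w_\infty}}$, which decays like $e^{-\epsilon t/E}$ and is hence integrable in $t$) gives the stated estimate and simultaneously shows $\dot c\in L^1(\R^+)$, so $c(t)$ converges; the phase estimate is analogous. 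I expect the main obstacle to be that the amplitude genuinely traverses an $O(1)$ range while the perturbation remains small, so every spectral quantity — the weight admissibility $w_\infty<\sqrt{c(t)}/3$, the decay rate $b$, the coercivity constant of $\mathcal{E}_{c(t)}$, the modulation chart — must be controlled uniformly in $c$ along a trajectory known only a~posteriori; keeping the time‑dependence of $\mathcal{A}_{c(t)}$ under control in the Duhamel estimate, and tracking how the forcing‑driven drift in $\mathcal{N},\mathcal{H}$ propagates through the energy estimate with the sharp powers of $\epsilon$ and $E$, is where most of the work lies.
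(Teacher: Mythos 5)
Your overall architecture --- modulation ansatz with orthogonality conditions, an unweighted $H^1$ bound from the near-conserved functional $\mathcal{E}_{c}$, a weighted decay estimate from the linearized semigroup, and a bootstrap closing both simultaneously --- matches the paper's: your energy step is essentially its Section 6, your integration of the modulation ODEs for items 3 and 4 is its Lemma 7.1, and the continuation argument is its Section 7. The gap is concentrated in the one clause you pass over most quickly: ``handle the slow time-dependence of $c(t)$ by a frozen-coefficient Duhamel argument.'' As written, your weighted estimate is a single global Duhamel formula with one frozen generator $\mathcal{A}_{c(t_0)}$, and its feedback term is $\int_0^t e^{(t-s)\mathcal{A}_{c(t_0)}}\big(\mathcal{A}_{c(s)}-\mathcal{A}_{c(t_0)}\big)v(s)\,\d s$, where $\mathcal{A}_{c(s)}-\mathcal{A}_{c(t_0)}=(c(s)-c(t_0))\partial_y-2\partial_y\big((\phi_{c(s)}-\phi_{c(t_0)})\,\cdot\,\big)$ carries the prefactor $|c(s)-c(t_0)|$. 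The entire point of the theorem is that $c(t)$ traverses an $O(1)$ range (e.g.\ the amplitude doubles), so this prefactor is $O(1)$ uniformly in the smallness parameters, the feedback term is comparable to $\sup_s e^{\gamma s}\|v(s)\|_{H^1_{w_\infty}}$ itself, and it cannot be absorbed. This is exactly the obstruction the paper identifies as the reason the Pego--Weinstein linearization cannot be used around a soliton of (even locally) fixed amplitude over long times; a frozen-coefficient argument is only viable on windows over which $|c(s)-c(t_0)|$ is small, and you would then have to iterate over infinitely many windows while beating the constant $M>1$ in the semigroup bound and re-orthogonalizing at each step --- none of which your proposal addresses.

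The paper's resolution is structurally different and is the main novelty you are missing. It does not linearize around $\phi_{c(t)}$ at all: it rescales space and amplitude via $v(t,x)=\alpha^2(t)u(t,\alpha(t)x+\xi(t))-\phi_{c_0}(x)$ with $\alpha=\sqrt{c_0/c(t)}$, so that the linear part is exactly $\alpha^{-3}(t)\mathcal{L}_{c_0}$ --- a fixed generator up to a time reparametrization --- and there is no frozen-coefficient error whatsoever. The price is the dilation term $\frac{\alpha_t}{\alpha}\,x\partial_x v$ in \eqref{eqn:Rdef}, which is unbounded on any single exponentially weighted space; it is tamed by working in asymmetrically weighted spaces $L^2_{\mathbf{w}}$ and estimating $\|xg\|_{L^2_{\mathbf{w}}}\lesssim (b_+-w_+)^{-1}\|g\|_{L^2_{\mathbf{b}}}$ between \emph{different} weights, with weights $w_\pm(t)$ that drift monotonically in time toward a common limit $w_\infty$ (this is also why the theorem delivers decay only in $H^1_{w_\infty}$ with $w_\infty<w$). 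Since that estimate degenerates as the weight gap closes, a separate short-time argument in the unscaled frame (your frozen-coefficient idea, but confined to windows of length $\epsilon^{-p}$ over which $|c(t+s)-c(t)|=O(\epsilon^{1-p})$) bridges the small gaps. Without the rescaling, or without a genuinely worked-out iteration scheme replacing it, the weighted decay estimate --- and hence item 1 and the closure of the bootstrap --- does not follow from what you have written.
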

% It also holds that
% \begin{align*}
% |{c}(t)-c_{\infty}|\lesssim & \epsilon^{1-4p} e^{-b\epsilon t}
% \end{align*}
% with 
% \[\tfrac{c_\infty}{c_0}=e^{\frac{4}{3}\int_0^\infty f(s)\d s}+O(\epsilon^{1-4p}).\] 
\begin{remark}
\begin{enumerate}
    \item The classic result in \cite{pegoweinstein} can be retrieved by letting $\epsilon=\delta_1 E \to 0$, up to a small loss in the weight 
    (since  $w_\infty < w$), which is further discussed in \Cref{sec:longtime}. In this case, it is required that $p=0$ through the assumption $ \epsilon^p\leq \delta_1/E$. The case of large amplitude modulation $(E \gg 0)$ requires that $p>0$.
    \item In case we replace the assumption  on $\overline{v}_*$ by the stronger assumption $
\|\overline{v}_*\|_{H^1}+\|\overline{v}_*\|_{H^1_w}\leq \delta_1\epsilon/E$, the bounds in items 2-4 simplify to
\begin{align*}\sup_{t \geq 0}\|u(t,\cdot+\xi(t))-\phi_{c(t)}\|^2_{H^1}\leq & C_1 E\epsilon^{1-4p}+C_1(E+\delta_1)\Big(\|\overline{v}_*\|_{H^1}+\|\overline{v}_*\|_{H^1_w}\Big),\end{align*}
    and
    \begin{align*}\sup_{t \geq 0}\big|c(t)-c_{\mathrm{ap}}(t)\big|+\sup_{t \geq 0}\left|\xi(t)-\xi_{\mathrm{ap}}(t)\right|
    \leq & 2C_1E\epsilon^{1-4p}+3C_1\|\overline{v}_*\|_{H^1_w}\\
    &+2C_1(E+\delta_1)\|\overline{v}_*\|_{H^1}.\end{align*}
\item Property \eqref{eqn:fassumption} is needed to obtain exponential decay of $\|v(t)\|_{H^1_{w_\infty}}$. We believe that this condition can be relaxed to $f\in L^1(0,\infty)\cap L^\infty(0,\infty)$. In this case, exponential decay over time of the weighted norm can not be expected.
\item The integrability of $f$ ensures that $c(t)$ and $c^{-1}(t)$ remain bounded, which prevents technical complications. First, it allows us to construct bounds that do not depend on $c(t)$. Second, it guarantees that we can apply a minimal exponential weight on the modulated soliton $\phi_{c(t)}$.
\item  The assumption $\overline{v}_*\in H^2\cap H^1_w$ fits the well-posedness results established in \cite[Appendix A]{pegoweinstein}. It would be interesting to see if this assumption can be relaxed to $\overline{v}_*\in L^2$ by following the arguments of \cite{mizumachi}.
    % We remark that well-posedness in the unweighted spaces $H^s$ has been established for $s>-3/4$ by Bourgain \cite{bourgain}. 
    \item  The assumption $w<\tfrac{1}{3}\sqrt{c_*}$ is slightly stricter than the common assumption $w<\sqrt{c_*/3}$. We require this stricter bound at various points to establish sharper bounds than previous works.
    \end{enumerate}
\end{remark}
Two related results for different forcing types are available in  \cite{holmer, zhong}. The result in \cite{holmer} deals with finite time stability, and in \cite{zhong}, the authors use a factorization technique that is not available in the current setting.

\subsection*{Approach}

Our approach is based on the stability theory of the solitons $\phi_c$ in the exponentially weighted spaces \eqref{eqn:H1a} developed in \cite{pegoweinstein}. The exponential weight facilitates stronger stability properties of the operator 
\begin{align}\mathcal{L}_c = -\partial_{x}^3+(c-2\phi_c) \partial_{x}-2\partial_x \phi_c = -\partial_{x}^3+c\partial_x -2\partial_x(\phi_c \cdot),\label{eqn:linearop}
\end{align}
which is associated with the linearization of the KdV equation around the soliton $\phi_c$.  Pego and Weinstein established that $\mathcal{L}_c$ generates a $C_0$-semigroup $\{e^{\mathcal{L}_{c}t}\}_{t\geq 0}$ on $L^2_w$ which is exponentially stable on the subspace of functions $v\in L^2_w$ that satisfy the orthogonality conditions
\begin{align}\label{eqn:orthogonalityct}
\langle v, \zeta_c \rangle_{L^2}= \langle v, \phi_c \rangle_{L^2}=0,
\end{align} 
where $\zeta_c$ is the primitive
\begin{align*}\zeta_c (x)=\int_{-\infty}^x \partial_c \phi_c(y)\d y \in L^2_{-w};\end{align*}
see \Cref{app:old} for further details. The conditions \eqref{eqn:orthogonalityct} play a central role in our approach. 

The main obstacle in proving \Cref{thm:bigthm} using the linear stability tools developed in \cite{pegoweinstein} is that, due to the large changes in $c(t)$, it is not feasible to linearize \eqref{eqn:forcedkdv} around a soliton with fixed amplitude. Instead, we move to a co-moving frame where the solution is not only translated, but also rescaled according to the natural scaling of the soliton family \eqref{eqn:soliton}. More precisely, we introduce the remainder
\begin{align}v(t,x)=\alpha^2(t)u\bigl(t,\alpha(t) x+\xi(t)\bigr)-\phi_{c_0}(x), \label{eqn:decomposition}\end{align}
in reference to a soliton $\phi_{c_0}$ with fixed amplitude and position. The remainder $v$ then follows an evolution equation of the form
\[v_t= \alpha^{-3}\mathcal{L}_{c_0}v+\frac{\alpha_t}{\alpha}x\partial_x v+O(\epsilon+v^2),\]
which allows us to leverage the linear stability properties of $\mathcal{L}_{c_0}$ on exponentially weighted spaces.
The term $x\partial_x v$ arising from the dilation of $u$ in \eqref{eqn:decomposition} causes significant technical complications. In order to estimate $x\partial_x v$ in terms of $v$, one needs to obtain some extra control at $|x|\to \infty$. 
For $v$ supported on $[0,\infty)$, we do so by estimating 
\[\|x\partial_x v\|_{L^2_w}\leq C_\beta\|\partial_x v\|_{L^2_{w+\beta}},\]
for some small $\beta>0$, which establishes control of the troublesome term viewed as an operator between \textit{different} exponentially weighted spaces. We arrive at a consistent argument by continuously decreasing the exponential weight over time. This, however, presents another difficulty on short-time scales, since the constant $C_\beta$ blows up as $\beta\downarrow 0$. We remedy this problem by employing the classical stability argument of \cite{pegoweinstein} on short time-scales where $c(t)$ only undergoes small fluctuations.

As has been argued by Pego \& Weinstein, stability in exponentially weighted spaces still requires control of the \textit{unweighted} $H^1$-norm of the perturbation, due to the nonlinearity in \eqref{eqn:forcedkdv} which would otherwise require double the exponential weight. This is established in \cite{pegoweinstein} by exploiting the fact that $\phi_c$ is a critical point of the conserved functional 
\begin{align}\mathcal{E}_{c}[u]=\mathcal{H}[u]+\tfrac{1}{2}c\mathcal{N}[u].\end{align}
We generalize this argument to accommodate for large fluctuations in $c(t)$ by analyzing the evolution of $\mathcal{E}_{c(t)}[u(t)]$ over time. We compute that
\[\partial_t\big(\mathcal{E}_{c(t)}[u(t)]-\mathcal{E}_{c(t)}[\phi_{c(t)}]\big)=O(\epsilon v+v^2),\]
which allows us to control the $H^1$ norm of the perturbation in terms of itself and the weighted norm.  

Our combined argument lifts the restrictions on the size of $|c_*-c(t)|$ inherent to previous approaches to establish stability. This is particularly useful for studying the KdV soliton in settings where $c(t)$ naturally undergoes large fluctuations on short time-scales. Indeed, we are pursuing the techniques developed in his paper in order to achieve stability on long time-scales in the setting of stochastic multiplicative forcing \cite{westdorp}.

% The outline is not required, but we show an example here.
\subsection*{Outline}

The paper is organized as follows. In \Cref{sec:modulation}, we derive a system of modulation equations that governs the behavior of the soliton amplitude $c(t)$, the position $\xi(t)$, and remainder $v(t)$. Then, in \Cref{sec:linearstability} we introduce the function spaces that are central to our stability argument, and assert stability and smoothing properties of the operator $\mathcal{L}_c$ on these spaces. We proceed by establishing control of the remainder $v$ over short-timescales by adapting a Duhamel argument of \cite{pegoweinstein}, and introduce the notion of time-varying weights. Thereafter, in \Cref{sec:longtime} we show that the remainder $v$ can be controlled over long time-scales in a weighted norm. The evolution of unweighted norms of $v$ is then analyzed in \Cref{sec:energy}. We finally provide the proof of \Cref{thm:bigthm} in \Cref{sec:proofmain}.

\section{Modulation system}
\label{sec:modulation}

In this section, we introduce our decomposition of solutions to \eqref{eqn:forcedkdv}, which forms the basis for our arguments. For convenience and brevity, we introduce the parameter $\gamma=\epsilon/E$
and recast \eqref{eqn:forcedkdv} in the form
\begin{align}\label{eqn:gammaversion}
    u_t=-\partial_x^3u-2u\partial_x u+\epsilon f(\gamma t) u.
\end{align}
In order to track how constants depend on the system parameters and the various choices that we make, we collect the various assumptions that we make throughout the paper in a number of labeled `settings'. The first of these relates to the global parameters $(c_*,E_{\mathrm{max}},w)$ and the initial condition for \eqref{eqn:gammaversion}.
% \begin{setting}\label{hyp:initial}
%     We have $c_*>0$ together with $E_{\mathrm{max}}>0$ and $ w\in(0,\sqrt{c_*}/3)$. The initial condition for \eqref{eqn:gammaversion} satisfies
% \begin{align}u(0,x)=\phi_{c_*}(x)+\overline{v}_*(x), \quad  \overline{v}_*\in H^2\cap H^1_w,\label{eqn:initial}\end{align}
% and the forcing term $f$ lies in the space $L^\infty(0,\infty)$.
% \end{setting}
\begin{itemize}
\item[\textbf{S1}]{
  \phantomsection\hypertarget{hyp:initial}{}\textit{
  We have $c_*>0$ together with $E_{\mathrm{max}}>0$ and $ w\in(0,\sqrt{c_*}/3)$. The initial condition for \eqref{eqn:gammaversion} satisfies
\[u(0,x)=\phi_{c_*}(x)+\overline{v}_*(x), \quad  \overline{v}_*\in H^2\cap H^1_w,\]
and the forcing term $f$ is continuous and lies in the space $L^\infty(0,\infty)$.}}
\end{itemize}
We now start by making an observation regarding the regularity of solutions to \eqref{eqn:gammaversion}.
\begin{lemma}\label{lem:regularity}
Assuming \hyperlink{hyp:initial}{S1} and letting $\epsilon, \gamma > 0$, the solution $u$ to \eqref{eqn:gammaversion} has regularity
\begin{align}u &\in C([0, T],H^2) \cap C^1([0, T],H^{-1}),\label{eqn:regularity}\\
e^{wx}u &\in C([0, T],H^1) \cap C^1([0, T],H^{-3}), \label{eqn:weightedregularity}\end{align}
for any $T > 0$. 
\end{lemma}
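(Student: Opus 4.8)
The plan is to invoke the well-posedness theory for the KdV equation in the weighted spaces $H^2 \cap H^1_w$, which is exactly the setting developed in \cite[Appendix A]{pegoweinstein}, and then treat the multiplicative forcing term $\epsilon f(\gamma t)u$ as a mild, time-dependent bounded perturbation of the KdV flow. First I would recall that the unforced KdV equation is globally well-posed in $H^2$ (classical results; see also the remarks after \Cref{thm:bigthm}), so that $u \in C([0,T],H^2) \cap C^1([0,T],H^{-1})$ already holds for the unforced flow — here the $H^{-1}$ time-regularity follows by reading off $u_t = -\partial_x^3 u - 2u\partial_x u$ and noting that $\partial_x^3 u \in C([0,T],H^{-1})$ and $u\partial_x u \in C([0,T],H^1) \subset C([0,T],H^{-1})$ by the algebra property of $H^1$. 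The weighted statement $e^{wx}u \in C([0,T],H^1)\cap C^1([0,T],H^{-3})$ is precisely what \cite[Appendix A]{pegoweinstein} establishes for initial data in $H^2\cap H^1_w$, via a fixed-point argument for the conjugated equation satisfied by $p(t,x) = e^{wx}u(t,x)$, namely $p_t = -(\partial_x - w)^3 p - 2 e^{-wx} (e^{wx}(\partial_x-w)^{-1}\!\cdots)$ — more cleanly, $p$ solves a KdV-type equation with a lower-order linear term $3w\partial_x^2 p - 3w^2\partial_x p + w^3 p$ and the nonlinearity $-2e^{-wx}u\partial_x(e^{wx}\cdot)$ rewritten in terms of $p$; the point is only that this is the equation handled in \cite{pegoweinstein} and it yields the claimed regularity.

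Next I would incorporate the forcing. The key observation is that, under \hyperlink{hyp:initial}{S1}, $f$ is continuous and bounded, so $t \mapsto \epsilon f(\gamma t)$ is a continuous, uniformly bounded scalar multiplier. Adding the term $\epsilon f(\gamma t) u$ to the right-hand side does not change the principal part of the equation; it is a linear, bounded-in-time zeroth-order term, which the standard Picard/Duhamel iteration absorbs without loss of regularity on any finite interval $[0,T]$ (one simply picks up an extra $e^{C\epsilon\|f\|_\infty T}$ factor in the a priori bounds, so no blow-up is introduced and the solution remains global). Concretely, one can either (i) rerun the fixed-point arguments of \cite[Appendix A]{pegoweinstein} with the extra term included — the contraction estimates go through verbatim since $\|\epsilon f(\gamma t)u\|_{H^2} \le \epsilon\|f\|_\infty\|u\|_{H^2}$ and similarly in $H^1_w$ — or (ii) use a change of unknown that removes the forcing entirely: set $u(t,x) = \lambda(t)\, \tilde u(t,x)$ with $\lambda(t) = \exp\!\big(\epsilon\int_0^t f(\gamma s)\,ds\big)$, so that $\tilde u$ solves $\tilde u_t = -\partial_x^3\tilde u - 2\lambda(t)\tilde u\partial_x\tilde u$, a KdV equation with a smooth positive time-dependent coefficient in front of the nonlinearity, to which the unforced theory applies directly after a further time reparametrization or a trivial modification of the energy estimates. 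Either route gives \eqref{eqn:regularity}–\eqref{eqn:weightedregularity}.

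I would then read off the time-regularity claims from the equation itself, now including the forcing: in $H^2$-valued terms, $u_t = -\partial_x^3 u - 2u\partial_x u + \epsilon f(\gamma t)u$, and each summand on the right lies in $C([0,T],H^{-1})$ (the first by $u\in C([0,T],H^2)$, the second by the $H^1$-algebra property, the third since $u \in C([0,T],H^2) \hookrightarrow C([0,T],H^{-1})$ and $f$ is continuous), giving $u \in C^1([0,T],H^{-1})$; likewise, writing $p = e^{wx}u$, the conjugated equation expresses $p_t$ as $-(\partial_x-w)^3 p$ plus lower-order and nonlinear terms plus $\epsilon f(\gamma t)p$, all of which lie in $C([0,T],H^{-3})$ given $p \in C([0,T],H^1)$, so $e^{wx}u \in C^1([0,T],H^{-3})$.

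The only genuine obstacle — and it is minor — is bookkeeping: making sure the forcing term is controlled in the \emph{same} norms used in the \cite{pegoweinstein} fixed-point scheme, in particular in the weighted space, and that the extra exponential factor in the a priori bounds does not interfere with globality. Since $|f|$ is bounded and $\epsilon$ finite, this factor is finite on every $[0,T]$, so no new smallness condition on $\epsilon$ or $E$ is needed here; the hypotheses of \Cref{thm:bigthm} are not invoked for this lemma, only \hyperlink{hyp:initial}{S1}. I would state explicitly that the argument is a routine adaptation of \cite[Appendix A]{pegoweinstein} and relegate the verbatim estimates to a citation rather than reproducing them.
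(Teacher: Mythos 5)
Your overall strategy coincides with the paper's: cite \cite[Appendix A]{pegoweinstein} for the unforced problem, and observe that the substitution $u(t)=\lambda(t)\tilde u(t)$ with $\lambda(t)=\exp\big(\epsilon\int_0^t f(\gamma s)\,\d s\big)$ turns \eqref{eqn:gammaversion} into a KdV equation with a time-dependent coefficient on the nonlinearity --- this is exactly the paper's change of unknown $z=e^{-(\epsilon/\gamma)\int_0^{\gamma t}f(s)\d s}u$, and it settles local well-posedness in both the unweighted and weighted spaces.

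The soft spot is globality. The entire content of the paper's proof is the a priori $H^2$ bound on $[0,T]$, which you assert ("one simply picks up an extra $e^{C\epsilon\|f\|_\infty T}$ factor") rather than derive. The point is that the quantities controlling $\|u\|_{H^2}$ are the KdV invariants $\mathcal{N}$, $\mathcal{H}$, and $\mathcal{E}_2$, which are no longer conserved; one must check that their time derivatives under the forced flow are $\epsilon f(\gamma t)$ times quantities already under control, and this requires an extra step (a Gagliardo--Nirenberg interpolation turning the inequality $\|\partial_x u\|_{L^2}^2\leq 2|\mathcal{H}[u]|+C\|u\|_{L^2}^{5/2}\|\partial_x u\|_{L^2}^{1/2}$ into a closed bound) before Gr\"onwall applies; an analogous computation is needed for $\mathcal{E}_2$ to reach $H^2$. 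Your Gr\"onwall intuition is correct and this is what the paper carries out, but it is the part that should be written down. Separately, the parenthetical suggestion that the transformed equation $\tilde u_t=-\partial_x^3\tilde u-2\lambda(t)\tilde u\partial_x\tilde u$ reduces to standard KdV "after a further time reparametrization" does not work: reparametrizing time by $\tau=\int_0^t\lambda$ moves the time dependence onto the dispersive term $\lambda^{-1}\partial_x^3$ rather than eliminating it, so one is forced back to your other alternative, namely redoing the energy estimates with the time-dependent coefficient present.
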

\begin{proof}
This result is established in \cite[Appendix A]{pegoweinstein} for $f(t)\equiv 0$ by modifying a well-posedness result of Kato \cite{kato}. 
To see that the arguments for local well-posedness (i.e. small $T>0$)
remain valid
%
%The regularity result remains valid 
upon including the forcing term $\epsilon f(\gamma t)$, 
%since 
we note that 
\eqref{eqn:gammaversion} is equivalent to a time-dependent KdV 
equation. Indeed, $u(t)$ solves \eqref{eqn:gammaversion} if and only if $z(t)=e^{-(\epsilon/\gamma)\int_0^{\gamma t} f(s) \d s}u(t)$ solves 
\begin{align*}
 z_t=-\partial_x^3z-2e^{(\epsilon/\gamma)\int_0^{\gamma t} f(s) \d s}z\partial_x z.
 \end{align*}

The arguments for global well-posedness (i.e. arbitrary $T >0$) rely on an a priori bound for the $H^2$-norm. We show here that 
%an a priori bound 
such a bound remains available upon including the forcing term $\epsilon f(\gamma t)$. Indeed, writing $u(t)=u$ for some $t\geq 0$ and using \eqref{eqn:gammaversion}, we compute that 
\begin{align*}
    \partial_t \|u\|^2_{L^2} \stackrel{\eqref{eqn:gammaversion}}{=}& -2\langle u, \partial_x^3 u+2 u \partial_x u\rangle +2\epsilon f(\gamma t) \langle u,u\rangle  = 2\epsilon f(\gamma t) \|u\|^2_{L^2},
\end{align*}
leading to the identity
\[\|u(t)\|^2_{L^2}=e^{2(\epsilon/\gamma)\int_0^{\gamma t} f(s) \d s}\|u(0)\|^2_{L^2}.\]
Moving on to the first derivative, an application of the Gagliardo-Nirenberg inequality yields
\[\|\partial_x u\|_{L^2}^2=2\mathcal{H}[u]+\tfrac{2}{3}\int_{\R}u^3 \d x\leq 2\Big|\mathcal{H}[u]\Big|+C\|u\|_{L^2}^{5/2}\|\partial_x u\|_{L^2}^{1/2}.\]
This inequality is of the form $x^4\leq A + B |x|$ with $A,B\geq 0$, from which we may conclude  $x^4\leq 2(A+B^{4/3})$, i.e.
\begin{align}\|\partial_x u\|_{L^2}^2 \leq 4\Big|\mathcal{H}[u]\Big|+2C^{4/3}\|u\|_{L^2}^{10/3}.\label{eqn:quartic}\end{align}
We then compute
\begin{align*}
    \partial_t \mathcal{H}[u]= \ &\partial_t\int_{\R}\tfrac{1}{2}(u_x)^2-\tfrac{1}{3}u^3 \d x=\int_{\R}u_x u_{xt}-u^2u_t \d x=-\int_{\R}(u_{xx} +u^2)u_t \d x\\
    \stackrel{\eqref{eqn:gammaversion}}{=}&\big\langle u_{xx} +u^2,u_{xxx}+(u^2)_x-\epsilon f(\gamma t)  u\big\rangle =\epsilon f(\gamma t) \|\partial_x u\|^2 -\epsilon f(\gamma t)  \int_{\R} u^3 \d x\\
    = \ & \epsilon f(\gamma t) (3\mathcal{H}[u]-\tfrac{1}{2}\|\partial_x u\|_{L^2}^2),
\end{align*}
which leads to 
\[\Big|\partial_t \mathcal{H}[u]\Big|\leq  5\epsilon |f(\gamma t)|\Big|\mathcal{H}[u]\Big|+\epsilon |f(\gamma t)|C^{4/3}\|u\|_{L^2}^{10/3}\]
and, via an application of Gr\"onwall's inequality, to the a priori bound
\[\Big|\mathcal{H}[u(t)]\Big|\leq \Big(\mathcal{H}[u(0)]+\frac{\epsilon}{\gamma} C^{4/3}\int_0^{\gamma t}|f( s)|\d s\sup_{0\leq s \leq t}\|u(s)\|_{L^2}^{10/3}\Big)e^{5\epsilon/\gamma  \int_0^{\gamma t} |f( s)| \d s}.\]
Via \eqref{eqn:quartic}, this provides an a priori bound on $\|\partial_x u(t)\|_{L^2}^2$.
Using the fact that the integral \[\mathcal{E}_2[u]=\int_\R (\partial_x^2 u)^2-\tfrac{10}{3} u (\partial_x u)^2 +\tfrac{5}{9}u^4 \d x\]
is conserved for the unperturbed KdV flow, we may use the bound $\|u\|_{L^4}^4\leq 4\|u\|_{H^1}^4$ to estimate
\begin{align*} \|\partial_x^2u\|_{L^2}^2=\mathcal{E}_2[u]+\tfrac{10}{3}\int_{\R} u(\partial_x u)^2 \d x-\tfrac{5}{9}\int_{\R}u^4 \d x\leq \Big|\mathcal{E}_2[u]\Big|+\tfrac{10}{3}\|u\|_{H^1}^3+\tfrac{20}{9}\|u\|_{H^1}^4.
\end{align*}
One may furthermore verify that
\begin{align*}
    \partial_t\mathcal{E}_2[u]
    =&2\epsilon f(\gamma t)\mathcal{E}_2[u]+\epsilon f(\gamma t)  \int_{\R} -\tfrac{10}{3} u(\partial_x u)^2+\tfrac{10}{9}  u^4 \ \d x,
\end{align*}
so that
\begin{align*}
    \Big|\partial_t\mathcal{E}_2[u]\Big|
    \leq &2\epsilon |f(\gamma t)|\Big|\mathcal{E}_2[u]\Big|+\epsilon |f(\gamma t)|\Big(  \tfrac{10}{3}\|u\|_{H^1}^3+\tfrac{10}{9}\|u\|_{H^1}^4\Big),
\end{align*}
through which one arrives at an a priori bound on $\mathcal{E}_2[u(t)]$, and hence $\|u(t)\|_{H^2}$.
\end{proof}
With these preliminaries in place, let us introduce our decomposition of solutions to \eqref{eqn:gammaversion}, which is based on \Cref{lem:implicit}. Provided that $\|\overline{v}_*\|_{L^2_w}$ is small enough, there exist unique parameters $\xi_0\in \R$ and $c_0>0$ that allow for the \textit{orthogonal} decomposition
\[u(0,x+\xi_0)=\phi_{c_0}(x)+\overline{v}_0(x) \quad \text{with} \quad \langle\overline{v}_0,\phi_{c_0}\rangle =\langle\overline{v}_0,\zeta_{c_0}\rangle=0,\]
where $\overline{v}_0\in H^2\cap H^1_w$. From there, we decompose the solution $u(t,x)$ to \eqref{eqn:gammaversion} for $t\geq0$ via
\begin{align}\overline{v}(t,x)=u(t,x+\xi(t))-\phi_{c(t)}(x),\label{eqn:unscaled}\end{align}
where the perturbation $\overline{v}$ satisfies
\begin{align}\langle \overline{v}(t,\cdot),\phi_{c(t)}\rangle=\langle \overline{v}(t,\cdot),\zeta_{c(t)}\rangle=0,\label{eqn:orthogonality}\end{align}
and $\xi, c$ are time-dependent modulation parameters. The existence, uniqueness, and continuous time-dependence of this decomposition is guaranteed by \Cref{lem:implicit} as long as $\|\overline{v}(t)\|_{L^2_w}$ is kept below some constant $\delta_2>0$. Based on this decomposition, we introduce a phase-shift parameter $\Omega$ through
\[{\xi}(t)=\xi_0+\int_0^t {c}(s)\ \d s+{\Omega}(t).\]
Lastly, we introduce a scaling parameter $\alpha$ and a \textit{rescaled} perturbation $v$ through
\begin{align}v(t,x)=\alpha^2(t)u\bigl(t,\alpha(t) x+\xi(t)\bigr)-\phi_{c_0}(x)\quad \text{with} \quad c(t)=c_0\alpha^{-2}(t),\label{eqn:valphadef}\end{align}
in which $u$ is rescaled in accordance with the scaling symmetry \eqref{eqn:symmetry}. Below, we collect various properties of $v$ and $\overline{v}$,
including the relation between their (distinct!) weighted norms.
\begin{lemma}\label{lem:scaling}
    Assuming \hyperlink{hyp:initial}{S1}, let $\overline{v}(t)\in H^2\cap H^1_b$ for some $t\geq 0$ and $b>0$. Furthermore, let $v(t)$ and $\alpha(t)$ be defined through \eqref{eqn:valphadef}. It then holds that 
    \begin{enumerate}
        \item $v(t,x)=\alpha^2(t)\overline{v}(t,\alpha(t)x)$;
        \item $\langle v(t,\cdot),\phi_{c_0}\rangle=\langle v(t,\cdot),\zeta_{c_0}\rangle=0$;
        \item ${v}(t)\in  H^2\cap H^1_{\alpha(t)b}$ with \[\|v(t)\|_{L^2_{\alpha(t)b}}=\alpha^{3/2}(t)\|\overline{v}(t)\|_{L^2_b}\quad \text{and} \quad  \|\partial_xv(t)\|_{L^2_{\alpha(t)b}}=\alpha^{5/2}(t)\|\partial_x\overline{v}(t)\|_{L^2_b}.\]
        
    \end{enumerate}
%     , and we have the identities 
%     \begin{align}\label{eqn:scaling}
% v(t,x)=\alpha^2(t)\overline{v}(t,\alpha(t)x)
% \end{align}
% and
% \begin{align}
%     \langle v(t,\cdot),\phi_{c_0}\rangle=\langle v(t,\cdot),\zeta_{c_0}\rangle=0\label{eqn:rescaled} .
% \end{align}
\end{lemma}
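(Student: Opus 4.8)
The plan is to verify the three claims of \Cref{lem:scaling} by direct computation, exploiting the change of variables implicit in \eqref{eqn:valphadef}. The essential observation is that all three items follow once we establish item 1, namely $v(t,x)=\alpha^2(t)\overline{v}(t,\alpha(t)x)$, so I would prove that first.

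For item 1: starting from the defining relation \eqref{eqn:valphadef}, I would substitute the unscaled decomposition \eqref{eqn:unscaled}, which gives $u(t,\alpha(t)x+\xi(t))=\overline{v}(t,\alpha(t)x)+\phi_{c(t)}(\alpha(t)x)$. Multiplying by $\alpha^2(t)$ and using the self-similarity property $\phi_c(y)=c\,\phi_1(\sqrt{c}\,y)$ of the solitons together with the relation $c(t)=c_0\alpha^{-2}(t)$ from \eqref{eqn:valphadef}, one checks that $\alpha^2(t)\phi_{c(t)}(\alpha(t)x)=\phi_{c_0}(x)$; indeed $\alpha^2 \phi_{c_0\alpha^{-2}}(\alpha x)=\alpha^2 (c_0\alpha^{-2})\phi_1(\sqrt{c_0}\,\alpha^{-1}\alpha x)=c_0\phi_1(\sqrt{c_0}\,x)=\phi_{c_0}(x)$. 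Subtracting $\phi_{c_0}(x)$ from both sides of the rescaled expression for $\alpha^2 u$ then yields exactly $v(t,x)=\alpha^2(t)\overline{v}(t,\alpha(t)x)$.

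For item 2, the orthogonality conditions: I would apply the change of variables $y=\alpha(t)x$ in the inner products $\langle v(t,\cdot),\phi_{c_0}\rangle$ and $\langle v(t,\cdot),\zeta_{c_0}\rangle$, using item 1. This produces $\alpha^2(t)\int \overline{v}(t,\alpha x)\phi_{c_0}(x)\,dx = \alpha(t)\int \overline{v}(t,y)\phi_{c_0}(y/\alpha)\,dy$, and here I want to rewrite $\phi_{c_0}(y/\alpha)$ in terms of $\phi_{c(t)}(y)$; again by self-similarity $\phi_{c_0}(y/\alpha)=\alpha^2\phi_{c(t)}(y)$ (the inverse of the identity used in item 1), so the integral becomes a multiple of $\langle \overline{v}(t,\cdot),\phi_{c(t)}\rangle$, which vanishes by \eqref{eqn:orthogonality}. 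The same manipulation works for $\zeta_{c_0}$ once one knows the analogous scaling relation $\zeta_{c_0}(y/\alpha)$ in terms of $\zeta_{c(t)}(y)$ --- this follows from the definition of $\zeta_c$ as the primitive of $\partial_c\phi_c$ and differentiating the soliton scaling identity in $c$, or equivalently from the change of variables inside the defining integral for $\zeta_c$. For item 3, the weighted-norm identities: substituting item 1 into $\|v(t)\|_{L^2_{\alpha b}}^2 = \int e^{2\alpha b x}|v(t,x)|^2\,dx = \alpha^4\int e^{2\alpha b x}|\overline{v}(t,\alpha x)|^2\,dx$ and setting $y=\alpha x$ gives $\alpha^3\int e^{2by}|\overline{v}(t,y)|^2\,dy = \alpha^3\|\overline{v}(t)\|_{L^2_b}^2$, hence $\|v(t)\|_{L^2_{\alpha b}}=\alpha^{3/2}\|\overline{v}(t)\|_{L^2_b}$; the derivative identity is identical after noting $\partial_x[\overline{v}(t,\alpha x)]=\alpha(\partial_x\overline{v})(t,\alpha x)$, which contributes an extra factor $\alpha^2$ under the square, i.e. $\alpha^{5/2}$ after the $\alpha^{1/2}$ from the substitution. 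The membership $v(t)\in H^2\cap H^1_{\alpha b}$ is immediate since $\alpha(t)>0$ is a fixed positive number at each $t$ and composition with a nonzero dilation preserves all these spaces.

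I do not expect a genuine obstacle here; the only point requiring a little care is the scaling behavior of $\zeta_c$ in item 2, since $\zeta_c$ is defined by an integral rather than pointwise like $\phi_c$, so one must justify interchanging the dilation with the primitive (equivalently, track the constant of integration, which is harmless because $\zeta_c$ is anchored at $-\infty$). One should also be mildly attentive to the fact that $\overline{v}(t)\in L^2_b$ only controls the tail as $x\to+\infty$, so the inner products with $\phi_{c_0}$ and $\zeta_{c_0}$ (which decays like $e^{-\sqrt{c_0}|x|}$ as $x\to-\infty$ and tends to a constant... wait, $\zeta_{c_0}\in L^2_{-w}$, so it grows at $-\infty$) are genuinely convergent; this is exactly the pairing already shown to be well-defined in \cite{pegoweinstein} and recalled in \Cref{app:old}, so no new integrability argument is needed.
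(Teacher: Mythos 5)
Your proposal is correct and follows essentially the same route as the paper: item 1 via the soliton self-similarity $\phi_c(x)=c\,\phi_1(\sqrt{c}\,x)$ combined with $c(t)=c_0\alpha^{-2}(t)$, item 2 by the change of variables $y=\alpha(t)x$ in the inner products (the paper leaves the scaling relations for $\phi_{c_0}$ and $\zeta_{c_0}$ implicit, which you correctly work out, including $\zeta_{c_0}(y/\alpha)=\alpha^{-1}\zeta_{c(t)}(y)$), and item 3 by the same direct substitution in the weighted integrals. No gaps.
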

\begin{proof}
Item 1 follows from \eqref{eqn:unscaled} by substituting $y=\alpha(t)x=\sqrt{c_0/c(t)}x$. In the same way, item 2 follows from \eqref{eqn:orthogonality}. Finally, we compute the norms
\begin{align*}
    \|v(t)\|_{L^2_{\alpha(t)b}}^2=&\int_\R v^2(t,x)e^{2\alpha(t)bx}\d x=\alpha^4(t)\int_\R \overline{v}^2(t,\alpha(t)x)e^{2\alpha(t)bx}\d x\\
    =&\alpha^3(t)\int_\R \overline{v}^2(t,y)e^{2by}\d x=\alpha^3(t)\|\overline{v}(t)\|_{L^2_b}^2
\end{align*}
and
\begin{align*}
    \|\partial_xv(t)\|_{L^2_{\alpha(t)b}}^2=&\int_\R (\partial_x v)^2(t,x)e^{2\alpha(t)bx}\d x=\alpha^6(t)\int_\R (\partial_x\overline{v})^2(t,\alpha(t)x)e^{2\alpha(t)bx}\d x\\
    =&\alpha^5(t)\int_\R (\partial_x\overline{v})^2(t,y)e^{2by}\d x=\alpha^5(t)\|\partial_x\overline{v}(t)\|_{L^2_b}^2,
\end{align*}
which yield item 3.
\end{proof}

\subsection*{Modulation equations}
Below, we derive a system of evolution equations that governs the behavior of the modulation parameters $v(t),\alpha(t)$ and $\Omega(t)$. An application of the chain rule to \eqref{eqn:valphadef} yields
\begin{align}\label{eqn:vmod}
    v_t=\alpha^{-3}\big(\mathcal{L}_{c_0}v+N(v)\big)+R(t,v;\alpha,\Omega),
\end{align}
where $N$ is the KdV nonlinearity $N(v)=-2v\partial_x v$ and
\begin{align}\label{eqn:Rdef}
    R(t,v;\alpha,\Omega)=\frac{\alpha_t}{\alpha}(2+x\partial_x)(\phi_{c_0}+v)+\frac{\Omega_t}{\alpha}\partial_x(\phi_{c_0}+v)+\epsilon f(\gamma t) (\phi_{c_0}+v).
\end{align}
We claim that the modulation parameters $\alpha$ and $\Omega$ follow the system of equations
\begin{align}\label{eqn:modulation}
    \begin{bmatrix}
        {\alpha}_t\\
        {\Omega}_t
    \end{bmatrix}=-{\alpha}\epsilon{f}(\gamma t)K^{-1}({v})\begin{bmatrix}
        \langle \phi_{c_0}+{v},\phi_{c_0}\rangle\\
        \langle \phi_{c_0}+{v},\zeta_{c_0}\rangle 
    \end{bmatrix}-{\alpha}^{-2}K^{-1}({v})\begin{bmatrix}
        \langle N({v}),\phi_{c_0}\rangle\\
        \langle N({v}),\zeta_{c_0}\rangle 
    \end{bmatrix}
\end{align}
% \begin{align}
%     \begin{bmatrix}
%         \alpha_t\\
%         \tilde{\Omega}_t
%     \end{bmatrix}=-\alpha^4\tilde{f}K^{-1}(\tilde{v})\begin{bmatrix}
%         \langle \phi_{c_0}+\tilde{v},\phi_{c_0}\rangle\\
%         \langle \phi_{c_0}+\tilde{v},\zeta_{c_0}\rangle 
%     \end{bmatrix}-\alphaK^{-1}(\tilde{v})\begin{bmatrix}
%         \langle N(\tilde{v}),\phi_{c_0}\rangle\\
%         \langle N(\tilde{v}),\zeta_{c_0}\rangle 
%     \end{bmatrix}
% \end{align}
where
\begin{align}
    K(v)=\begin{bmatrix}
\bigl\langle (x\partial_x+2)(\phi_{c_0}+{v}),\phi_{c_0}\bigr\rangle& \bigl\langle  \partial_x {v}, \phi_{c_0}\bigr\rangle\\
\bigl\langle (x\partial_x+2)(\phi_{c_0}+{v}),\zeta_{c_0}\bigr\rangle &\bigl\langle \partial_x (\phi_{c_0}+{v}),\zeta_{c_0}\bigr\rangle
\end{bmatrix}\label{eqn:kmatrix}
\end{align}
and \[  {\alpha}(0)=1, \quad
        {\Omega}(0)=0.\] 
Indeed, this implies that 
\begin{align}\big\langle \alpha^{-3}N(v)+R(t,v;\alpha,\Omega),\phi_{c_0}\big\rangle=\big\langle \alpha^{-3}N({v})+R(t,v;\alpha,\Omega),\zeta_{c_0}\big\rangle=0, \label{eqn:vtorthogonal}\end{align}
which is necessary to ensure that
$\langle v,\phi_{c_0}\rangle=\langle v,\zeta_{c_0}\rangle=0$, and equivalently $\langle \overline{v},\phi_{c(t)}\rangle=\langle \overline{v},\zeta_{c(t)}\rangle=0$. The matrix $K(v)$ is invertible in case $\|v\|_{L^2_w}$ is suitably small, since $K(0)$ is invertible and $v\mapsto \det K(v)$ is continuous from $L^2_w$ to $\R$. Consequently, the system \eqref{eqn:vmod}, \eqref{eqn:modulation} is well-defined as long as $\|v(t)\|_{L^2_w}$ remains suitably bounded.

Setting $v=0$ reduces \eqref{eqn:modulation} to
\begin{align*}
    \begin{bmatrix}
        {\alpha}_t\\
        {\Omega}_t
    \end{bmatrix}=-{\alpha}\epsilon{f}(\gamma t)\frac{1}{9}\begin{bmatrix}
c_0^{-3/2}& 0\\
2c_0^{-2} &-2c_0^{-1/2}
\end{bmatrix}\begin{bmatrix}
        6c_0^{3/2}\\
        9
    \end{bmatrix}=-{\alpha}\epsilon{f}(\gamma t)\begin{bmatrix}
        \frac{2}{3}\\
        -\frac{2}{3}c_0^{-1/2}
    \end{bmatrix}
\end{align*}
and gives rise to the leading-order approximations $c_{\mathrm{ap}}(t)$ and $\xi_{\mathrm{ap}}(t)$ defined in \eqref{eqn:leadingorderc} and \eqref{eqn:leadingorderxi}.

We conclude this section by noting that,
as a result of \Cref{lem:regularity}, the evolution equation \eqref{eqn:vmod} is initially well-posed in $H^{-3}_b$ on $[0,T]$ for some $T>0$ and any 
\[b\in \big(0,w\min_{t\in[0,T]}\alpha(t)\big).\] 
In particular, the term $x\partial_x [\phi_{c_0}+v]$ in \eqref{eqn:Rdef} lies in  $L_b^2$ since there exists a constant $C >0$, for which we have
\begin{align*}\|x\partial_x(\phi_{c_0}+v)\|_{L_b^2}\stackrel{\eqref{eqn:valphadef}}{=}&\|x\partial_x \alpha^2u(t,\alpha\cdot +\xi)\|_{L_b^2}=\alpha^{3/2}\|x\partial_x u\|_{L_{b/\alpha}^2} \\
\leq \ & C \big((b/\alpha)^{-1}\|\partial_x u\|_{L^2}+(w-b/\alpha)^{-1}\|\partial_x u\|_{L_w^2}\big);
\end{align*}
see also \Cref{lem:observations} below.

\section{Linear stability on weighted spaces}\label{sec:linearstability}

The orbital stability proof of \cite{pegoweinstein} relies on stability and smoothing properties of the evolution generated by the linear operator $\mathcal{L}_c$ defined in \eqref{eqn:linearop}; see \Cref{thm:linearstability}. These properties hold on exponentially weighted spaces after applying the projection $Q_c=I-P_c$, where
$P_c$ is the spectral projection corresponding to the 0-eigenvalue of $\mathcal{L}_c$, given by 
\[P_{c} f=\langle f, \eta_c^1 \rangle \partial_x \phi_{c} +\langle f, \eta_c^2\rangle \partial_c \phi_c.\]
Here, $\eta_c^1 $ and $\eta_c^2$ are linear combinations of $\phi_c$ and $\zeta_c$ that are defined in \eqref{eqn:linearcomb}. As such, the subspace of $L^2_w$ characterized by \eqref{eqn:orthogonalityct} corresponds to $\ker P_c \subseteq L^2_w$. We review this classical result in \Cref{app:old}, where it is stated as \Cref{thm:linearstability}. 

The spaces $L^2_w$, however, are unsuitable for controlling the term $x\partial_x v$ present in \eqref{eqn:Rdef}. One can for instance not expect that
\[\|x\partial_x v\|_{L^2_w}\leq C \|v\|_{H^1_w}\]
for a constant $C$ and all $v\in L^2_w$, due to the unbounded and non-integrable factor $x$. It is, however, true that
\[\|x g\|^2_{L^2_w}=\int_{-\infty}^\infty e^{2wx} x^2 g^2(x) \d x \leq C\int_{-\infty}^0 e^{2b_-x} g^2(x) \d x+C\int_0^\infty e^{2b_+x} g^2(x) \d x \]
for $b_-< w<  b_+$, some constant $C>0$, and functions $g$ for which the above quantity is well-defined. This leads us to introduce the notion of \textit{asymmetrically-weighted spaces}. For every $\mathbf{w}=(w_-,w_+)\in \R^2$, we introduce the weighted space
\[L^2_{\mathbf{w}}=\big\{g: e^{w_- x}g\in L^2(-\infty,0) \quad \text{and} \quad e^{w_+ x}g\in L^2(0,\infty)\big\}\]
with norm
\begin{align}\|g\|^2_{L^2_{\mathbf{w}}}:=\int_{-\infty}^0 e^{2w_- x}g^2(x)\d x+\int_{0}^\infty e^{2w_+ x}g^2(x)\d x.\label{eqn:norm}\end{align}
Writing $g_+(x)=g(x)\chi_{x\geq 0}(x)$ and $g_-(x)=g(x)\chi_{x\leq 0}(x)$, we have
\begin{align}\|g\|^2_{L^2_{\mathbf{w}}} =\|g_-\|^2_{L^2_{w_-}}+\|g_+\|^2_{L^2_{w_+}}.\label{eqn:normrewrite}\end{align}
The following result asserts that the stability and smoothing properties of $\{e^{\mathcal{L}_ct}\}_{t\geq 0}$ provided in \Cref{thm:linearstability} extend to asymmetrically-weighted spaces.
\begin{proposition}\label{prop:smoothing}
    Let $c>0$ and $w_-,w_+\in (0,\sqrt{c})$. For all $\beta>0$ that satisfy \[\beta<\min\{w_-(c-w_-^2),w_+(c-w_+^2)\}, \] 
    there exists a constant $M>0$ such that for all $g\in L_{\mathbf{w}}^2$, $t>0$, and $k\in\{0,1\}$, we have
    \begin{align}
        \|\partial_x^k  e^{\mathcal{L}_{c}t}Q_c g\|_{L^2_{\mathbf{w}}}\leq M t^{-k/2}e^{-\beta t}\|g\|_{L^2_{\mathbf{w}}}.\label{eqn:stabilityandsmoothing}
    \end{align}
\end{proposition}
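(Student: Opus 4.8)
The proposition asserts that the exponentially-weighted stability/smoothing estimate for $\{e^{\mathcal{L}_c t}\}$ known on symmetric spaces $L^2_w$ (stated as Theorem~\ref{thm:linearstability} in the appendix) transfers to the asymmetrically-weighted spaces $L^2_{\mathbf{w}}$. The proof I would write is essentially an interpolation/splitting argument. First, the decomposition \eqref{eqn:normrewrite} says $\|g\|^2_{L^2_{\mathbf{w}}}=\|g_-\|^2_{L^2_{w_-}}+\|g_+\|^2_{L^2_{w_+}}$, so to control the $L^2_{\mathbf w}$-norm of $\partial_x^k e^{\mathcal L_c t}Q_c g$ it suffices to control it separately in $L^2_{w_-}(-\infty,0)$ and $L^2_{w_+}(0,\infty)$, each of which is dominated by the corresponding \emph{full}-line symmetric norm $L^2_{w_\pm}(\R)$. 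Thus the real task is: bound $\|\partial_x^k e^{\mathcal L_c t}Q_c g\|_{L^2_{w_\pm}(\R)}$ by $\|g\|_{L^2_{\mathbf w}}$ — but the input $g$ only lives in $L^2_{\mathbf w}$, not in either $L^2_{w_-}$ or $L^2_{w_+}$.

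**Key steps.**

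I would split $g = g_- + g_+$ as in \eqref{eqn:normrewrite}. Since $w_-\le w_+$ on, say, the half line $x\ge 0$ the heavier-decaying of the two weights controls things, while on $x\le 0$ the other does; concretely $g_+\in L^2_{w_+}(\R)$ with $\|g_+\|_{L^2_{w_+}(\R)}=\|g_+\|_{L^2_{w_+}(0,\infty)}\le\|g\|_{L^2_{\mathbf w}}$, and similarly $g_-\in L^2_{w_-}(\R)$. Now apply Theorem~\ref{thm:linearstability} (the symmetric-weight statement) to each piece: for $g_+$ I get $\|\partial_x^k e^{\mathcal L_c t}Q_c g_+\|_{L^2_{w_+}(\R)}\le M_+ t^{-k/2}e^{-\beta_+ t}\|g_+\|_{L^2_{w_+}(\R)}$ for any admissible decay rate $\beta_+ < w_+(c-w_+^2)$, and likewise for $g_-$ with weight $w_-$. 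The decomposition $e^{\mathcal L_c t}Q_c g = e^{\mathcal L_c t}Q_c g_- + e^{\mathcal L_c t}Q_c g_+$ then gives, restricting to $x\ge0$,
\[
\|(\partial_x^k e^{\mathcal L_c t}Q_c g)_+\|_{L^2_{w_+}(0,\infty)} \le \|\partial_x^k e^{\mathcal L_c t}Q_c g_+\|_{L^2_{w_+}(\R)} + \|\partial_x^k e^{\mathcal L_c t}Q_c g_-\|_{L^2_{w_+}(0,\infty)},
\]
and the second term is controlled because on $x\ge 0$ the weight $e^{w_+ x}\ge e^{w_- x}$ is \emph{larger} — wait, that is the wrong direction, so this is exactly the obstacle (see below). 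The symmetric case handled by Theorem~\ref{thm:linearstability} with one weight must be leveraged against the decay margin: I would instead re-derive the full estimate by the Fourier/Laplace-transform method underlying \cite{pegoweinstein}, where the semigroup generated by the shifted operator $e^{w x}\mathcal L_c e^{-wx}$ has resolvent set controlled by a parabola $\mathrm{Re}\,\lambda < w(c-w^2)$; the asymmetric weight corresponds to conjugating by $e^{\mathbf w\cdot x}$ (piecewise) and one checks the essential spectrum and the smoothing bound are governed by $\min_\pm w_\pm(c-w_\pm^2)$, which is precisely the hypothesis on $\beta$. Finally, combine the $\pm$ contributions via \eqref{eqn:normrewrite}, take $M$ to be the maximum of the constants arising, and note $t^{-k/2}$ and $e^{-\beta t}$ factors are common to both pieces.

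**Main obstacle.**

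The genuine difficulty is the cross term: a piece of data supported on $x\le 0$ (hence only in $L^2_{w_-}$) evolves under $e^{\mathcal L_c t}$ into a function that we need to measure in $L^2_{w_+}$ on $x\ge 0$, and since $w_+$ could exceed $w_-$, the weight $e^{w_+ x}$ grows faster than the available decay of $g_-$. The resolution must use that the semigroup has \emph{finite speed of leftward propagation} in the weighted norm (the soliton moves right, the essential spectrum lies in a left half-plane for either weight), so mass does not leak from the left half-line into regions where the heavier weight is ruinous; quantitatively this is exactly what the conjugation argument encodes — conjugating $\mathcal L_c$ by the piecewise weight $e^{w_- x\wedge 0 + w_+ x\vee 0}$ produces a bounded perturbation of the symmetric cases (the weight is Lipschitz, so $[\partial_x, e^{\rho(x)}]$ is a bounded multiplier), and the numerical range / resolvent bounds degrade only by the $\beta$-margin allowed in the hypothesis. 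I expect to spend most of the write-up verifying that this piecewise conjugation is an admissible bounded perturbation and that the resulting generator still satisfies the resolvent estimates needed to rerun the Gearhart–Prüss / Laplace-inversion argument of \cite{pegoweinstein}; the smoothing exponent $t^{-k/2}$ for $k=1$ comes along for free from the same parabolic resolvent bound as in the symmetric case.
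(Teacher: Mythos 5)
Your first instinct --- split $g=g_-+g_+$ as in \eqref{eqn:normrewrite} and feed each piece into \Cref{thm:linearstability} --- is exactly the paper's proof, but you abandon it because of an obstacle that is not actually there. You worry that the ``cross term'' $\|\partial_x^k e^{\mathcal{L}_c t}Q_c g_-\|_{L^2_{w_+}(0,\infty)}$ is out of reach because $g_-$ lies ``only in $L^2_{w_-}$'' and $e^{w_+x}$ grows faster than the available decay of $g_-$. This has the inequality backwards: $g_-$ is supported on $x\le 0$, where (for $w_-\le w_+$, the case relevant to the paper) one has $w_+x\le w_-x$ and hence $e^{w_+x}\le e^{w_-x}$, so that
\begin{align*}
\|g_-\|_{L^2_{w_+}(\R)}=\|g_-\|_{L^2_{w_+}(-\infty,0)}\le\|g_-\|_{L^2_{w_-}(-\infty,0)}\le\|g\|_{L^2_{\mathbf{w}}}.
\end{align*}
A function supported on the left half-line automatically belongs to \emph{both} full-line spaces $L^2_{w_-}(\R)$ and $L^2_{w_+}(\R)$, and symmetrically for $g_+$; the decay of $g_-$ at $-\infty$ is irrelevant to the weight $e^{w_+x}$, which itself decays there. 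One therefore applies \Cref{thm:linearstability} to $g_-$ with the symmetric weight $w_+$ directly, getting $\|\partial_x^k e^{\mathcal{L}_c t}Q_c g_-\|_{L^2_{w_+}(\R)}\le Mt^{-k/2}e^{-\beta t}\|g_-\|_{L^2_{w_-}}$, where the hypothesis $\beta<\min\{w_-(c-w_-^2),w_+(c-w_+^2)\}$ ensures the rate $\beta$ is admissible for both weights; the four resulting terms recombine via item 1 of \Cref{lem:observations}. That five-line computation is the paper's entire proof.

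The replacement you propose --- conjugating $\mathcal{L}_c$ by the piecewise weight $e^{w_-(x\wedge 0)+w_+(x\vee 0)}$ and rerunning the resolvent/Gearhart--Pr\"uss machinery of \cite{pegoweinstein} --- is not carried out in the proposal, and would be considerably more delicate than you suggest: conjugating $\partial_x^3$ by $e^{\rho}$ with $\rho'$ a step function produces distributional derivatives of $\rho'$ concentrated at $x=0$, not a bounded multiplier, so the essential-spectrum and smoothing analysis would have to be redone from scratch rather than absorbed as a ``bounded perturbation.'' None of this is needed. As written, the proposal does not contain a complete proof; the correct and complete argument is the elementary one you started with and then discarded.
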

\Cref{prop:smoothing} is easily proved using some elementary observations regarding the norm \eqref{eqn:norm}. Items 2 and 3 will be used in later sections.
\begin{lemma}\label{lem:observations}
    If $\mathbf{w},\mathbf{b}\!\!\in\! \R^2$ satisfy $b_-\!<\! w_-$, $b_+\!>\! w_+$ and $g\!\in\! L^2_{\mathbf{b}}$, then $g, xg \!\in\! L^2_{\mathbf{w}}$, and
    \begin{enumerate}
        \item $\|g\|_{L^2_{\mathbf{w}}}\leq \|g\|_{L^2_{w_+}}+\|g\|_{L^2_{w_-}}$;
        \item $\|g\|_{L^2_{\mathbf{w} }}
     \leq\|g\|_{L^2_{\mathbf{b}}} $;
     \item $\|x g\|^2_{L^2_{\mathbf{w} }}
     \leq e^{-2}(b_--w_-)^{-2}\|g_-\|^2_{L^2_{b_-}}+ e^{-2}(b_+-w_+)^{-2}\|g_+\|^2_{L^2_{b_+ }}$.
    \end{enumerate}
\end{lemma}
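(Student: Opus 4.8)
The plan is to work directly from the definition of the norm $\|\cdot\|_{L^2_{\mathbf{w}}}$ in \eqref{eqn:norm}, splitting every integral at $x=0$ and estimating the two half-line pieces separately. Item~1 is immediate: by \eqref{eqn:normrewrite} we have $\|g\|^2_{L^2_{\mathbf{w}}}=\|g_-\|^2_{L^2_{w_-}}+\|g_+\|^2_{L^2_{w_+}}\leq \|g\|^2_{L^2_{w_-}}+\|g\|^2_{L^2_{w_+}}$ (extending each half-line integral to all of $\R$ only adds a nonnegative quantity), and then $\sqrt{a+b}\leq\sqrt a+\sqrt b$ gives the stated sum form. Item~2 is equally short: on $(-\infty,0)$ we have $x\leq 0$, so $b_-<w_-$ forces $e^{2w_-x}\leq e^{2b_-x}$, and on $(0,\infty)$ we have $x\geq 0$, so $b_+>w_+$ forces $e^{2w_+x}\leq e^{2b_+x}$; adding the two integrals yields $\|g\|^2_{L^2_{\mathbf{w}}}\leq\|g\|^2_{L^2_{\mathbf{b}}}$, which also shows $g\in L^2_{\mathbf{w}}$ whenever $g\in L^2_{\mathbf{b}}$.

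Item~3 is the one requiring an actual computation, and it is the crux of the lemma. Again split at $0$: $\|xg\|^2_{L^2_{\mathbf{w}}}=\int_{-\infty}^0 x^2 e^{2w_-x}g^2\,\d x+\int_0^\infty x^2 e^{2w_+x}g^2\,\d x$. For the positive half-line I would write $x^2 e^{2w_+x}=x^2 e^{-2(b_+-w_+)x}\cdot e^{2b_+x}$ and bound the scalar prefactor $x^2 e^{-2(b_+-w_+)x}$ by its maximum over $x\geq0$. Setting $\mu=b_+-w_+>0$, the function $x\mapsto x^2 e^{-2\mu x}$ is maximized at $x=1/\mu$ with value $(1/\mu^2)e^{-2}=e^{-2}(b_+-w_+)^{-2}$; pulling this constant out leaves exactly $e^{-2}(b_+-w_+)^{-2}\int_0^\infty e^{2b_+x}g^2\,\d x=e^{-2}(b_+-w_+)^{-2}\|g_+\|^2_{L^2_{b_+}}$. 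The negative half-line is symmetric: with $y=-x\geq0$ and $\nu=w_--b_->0$ one has $x^2 e^{2w_-x}=y^2 e^{-2\nu y}\cdot e^{2b_-x}$, and $\sup_{y\geq0}y^2e^{-2\nu y}=e^{-2}\nu^{-2}=e^{-2}(b_--w_-)^{-2}$, giving the matching term $e^{-2}(b_--w_-)^{-2}\|g_-\|^2_{L^2_{b_-}}$. Adding the two pieces gives the claimed bound, and finiteness of the right-hand side (from $g\in L^2_{\mathbf{b}}$) shows $xg\in L^2_{\mathbf{w}}$.

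I do not anticipate any genuine obstacle here; the only point demanding care is the elementary optimization $\max_{s\geq0}s^2 e^{-2\mu s}=e^{-2}/\mu^2$ (differentiate: the critical point is $s=1/\mu$, and the value there is $(1/\mu^2)e^{-2}$), which is what produces the precise constants $e^{-2}(b_\pm\mp w_\pm)^{-2}$ in item~3. Everything else is monotonicity of the exponential on each half-line plus the decomposition \eqref{eqn:normrewrite}.
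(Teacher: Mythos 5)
Your proposal is correct and follows essentially the same route as the paper: split at $x=0$ via \eqref{eqn:normrewrite}, use monotonicity of the exponential weight on each half-line for items 1 and 2, and for item 3 pull out $\sup_{x\geq 0}x^2e^{2(w_+-b_+)x}=e^{-2}(b_+-w_+)^{-2}$ (and its mirror on the negative half-line), which is exactly the optimization the paper performs. No gaps.
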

\begin{proof}
Item 1 follows directly from \eqref{eqn:normrewrite}. For item 2, we estimate
\begin{align*}
     \|g\|^2_{L^2_{\mathbf{w} }}= \|g_-\|^2_{L^2_{w_-}}+\|g_+\|^2_{L^2_{w_+}}
     \leq &\|g_-\|^2_{L^2_{b_-}}+\|g_+\|^2_{L^2_{b_+}}=\|g\|^2_{L^2_{\mathbf{b}}}. 
\end{align*}
 Similarly, we prove item 3 by estimating
\begin{align*}
     \|x g\|^2_{L^2_{\mathbf{w} }}=& \|x g_-\|^2_{L^2_{w_-}}+\|x g_+\|^2_{L^2_{w_+}}\\
     \leq & 
     \sup_{x\leq 0}x^2e^{2(w_--b_-)x}\|g_-\|^2_{L^2_{b_-}}+ \sup_{x\geq 0}x^2e^{2(w_+-b_+)x}\|g_+\|^2_{L^2_{b_+ }}\\
     = & e^{-2}(b_--w_-)^{-2}\|g_-\|^2_{L^2_{b_-}}+ e^{-2}(b_+-w_+)^{-2}\|g_+\|^2_{L^2_{b_+ }}.     \qedhere
\end{align*}
\end{proof}
\begin{proof}[Proof of \Cref{prop:smoothing}]
For $g\in L^2_{\mathbf{w}}$, we may use item 1 of \Cref{lem:observations} and \Cref{thm:linearstability} to compute
\begin{align*}
    \|\partial_x^k e^{\mathcal{L}_{c}t}Q_c g\|_{L^2_{\mathbf{w}}}\leq \ &  \|\partial_x^k e^{\mathcal{L}_{c}t}Q_c g_-\|_{L^2_{\mathbf{w}}}+\|\partial_x^k e^{\mathcal{L}_{c}t}Q_c g_+\|_{L^2_{\mathbf{w}}}\\
    \leq \ &  \|\partial_x^k e^{\mathcal{L}_{c}t}Q_c g_-\|_{L^2_{w_-}}+\|\partial_x^k e^{\mathcal{L}_{c}t}Q_c g_-\|_{L^2_{w_+}}\\
    &+\|\partial_x^k e^{\mathcal{L}_{c}t}Q_c g_+\|_{L^2_{w_-}}+\|\partial_x^k e^{\mathcal{L}_{c}t}Q_c g_+\|_{L^2_{w_+}}\\
    \stackrel{\eqref{eqn:linear}}{\leq} & Mt^{-k/2}e^{-\beta t}(\|g_-\|_{L^2_{w_-}}+\|g_+\|_{L^2_{w_+}})\\
    \leq \ &  Mt^{-k/2}e^{-\beta t}\|w\|_{L^2_{\mathbf{w}}}.    \qedhere
\end{align*}
\end{proof}

\section{Short-time control}
\label{sec:shorttime}
In this section, we establish control over the perturbation in the original frame in asymmetrically-weighted spaces over short time-scales. Using our rescaled frame description \eqref{eqn:vmod} on \textit{short} time-scales leads to problematic complications arising from the term $x\partial_x$ in \eqref{eqn:Rdef}. Instead, we rely on classical results valid for small amplitude fluctuations. We follow the argument of Pego \& Weinstein \cite[Proposition 6.1]{pegoweinstein}, which uses the evolution
\begin{align*}
    \overline{v}_t
    =&\mathcal{L}_{c(t)}\overline{v}+N(\overline{v})-{c_t}\partial_c\phi_{c(t)}+{\Omega_t}\partial_x(\overline{v}+\phi_{c(t)})+\epsilon f(\gamma t)(\overline{v}+\phi_{c(t)}),
\end{align*}
 for the perturbation in the original frame $\overline{v}$ 
(initially justified in $H^{-1}$ via \eqref{eqn:regularity}). This argument relies heavily on an approximation of the form
\[\mathcal{L}_{c(t)}=\mathcal{L}_{c_0}+O\big(|c_0-c(t)|\big).\]

In our setting, we pick $t_\diamond > 0$ and linearize around the fixed soliton $\phi_{c(t_\diamond)}$ 
by writing
\begin{align}\overline{v}_t(t_\diamond+s)=\mathcal{L}_{c(t_\diamond)}\overline{v}(t_\diamond+s)+Y(t_\diamond,s,\overline{v};c,\Omega),\label{eqn:strong}\end{align}
where
\begin{align}
    Y(t_\diamond,s,\overline{v};c,\Omega)=&\big(\mathcal{L}_{c(t_\diamond+s)}-\mathcal{L}_{c(t_\diamond)}\big)\overline{v}(t_\diamond+s)+N(\overline{v}(t_\diamond+s))-{c_t(t_\diamond+s)}\partial_c\phi_{c(t_\diamond+s)}\nonumber\\
&+{\Omega_t(t_\diamond+s)}\partial_x \big(\overline{v}(t_\diamond+s)+\phi_{c(t_\diamond+s)}\big)\nonumber\\
&+\epsilon f(\gamma (t_\diamond+s))\big(\overline{v}(t_\diamond+s)+\phi_{c(t_\diamond+s)}\big).\label{eqn:yts}
\end{align}
We recall that $c(t)$ is related to the rescaling process through $c(t)=c_0\alpha^{-2}(t)$, so that $c(t)$ follows the modulation equation $c_t=-2c_0\alpha^{-3}\alpha_t$ where $\alpha_t$ is given by \eqref{eqn:modulation}. Throughout this section, we will assume that both $\alpha$ and $c$ can be bounded away from zero. More precisely, we make the following assumptions \hyperlink{hyp:alphabound}{S2} and \hyperlink{hyp:wmin}{S3}, and formulate a condition \hyperlink{hyp:Tproperties}{C1} that underlies most of the results in this section.
\begin{itemize}
\item[\textbf{S2}]{
  \phantomsection\hypertarget{hyp:alphabound}{}\textit{
  The constants $\alpha_{\mathrm{min}}, \alpha_{\mathrm{max}}\in \R$ satisfy $0<\alpha_{\mathrm{min}}<1 < \alpha_{\mathrm{max}}$.}}
\end{itemize}
\begin{itemize}
\item[\textbf{S3}]{
  \phantomsection\hypertarget{hyp:wmin}{}\textit{
  The constant $w_{\mathrm{min}}\in \R$ satisfies $0<w_{\mathrm{min}}<w$.}}
\end{itemize}
% \begin{setting}\label{hyp:alphabound}
% The constants $\alpha_{\mathrm{min}}, \alpha_{\mathrm{max}}\in \R$ satisfy $0<\alpha_{\mathrm{min}}<1 < \alpha_{\mathrm{max}}$.
% \end{setting}
% \begin{setting}\label{hyp:wmin}
%     The constant $w_{\mathrm{min}}\in \R$ satisfies $0<w_{\mathrm{min}}<w$.
% \end{setting}
% \begin{condition}\label{hyp:Tproperties}Given $T>0$, let $\overline{v}\in  C([0, T],H^2) \cap C^1([0, T],H^{-1})$ solve \eqref{eqn:strong} and $\alpha \in C^1([0,T],\R)$ solve \eqref{eqn:modulation}. We assume that
%     \[\alpha(t)\in [\alpha_{\mathrm{min}},\alpha_{\mathrm{max}}], \quad t\in[0,T].\]
% \end{condition}
\begin{itemize}
\item[\textbf{C1}]{
  \phantomsection\hypertarget{hyp:Tproperties}{}\textit{
  Given $T>0$, the function $\overline{v}\in  C([0, T],H^2) \cap C^1([0, T],H^{-1})$ satisfies \eqref{eqn:strong}, while the function $\alpha \in C^1([0,T],\R)$ solves \eqref{eqn:modulation}. In addition, we have the inclusion
     \[\alpha(t)\in [\alpha_{\mathrm{min}},\alpha_{\mathrm{max}}], \quad t\in[0,T].\]}}
\end{itemize}
Our main result in this section provides short-time control over $\overline{v}$. More precisely, on time intervals where $\overline{v}$ and the fluctuations of $c$ are small enough, the growth of $\overline{v}$ measured in the $H^1_{\overline{\mathbf{w}}}$-norm can be explicitly controlled by the small forcing amplitude $\epsilon > 0$ and the length of the time interval $\delta > 0$.
 
\begin{proposition}[Short-time control]\label{prop:short}
   Assuming \hyperlink{hyp:initial}{S1} - \hyperlink{hyp:wmin}{S3}, there exist constants $\delta_4,C_4>0$ such that the following holds true for each $\epsilon,\gamma>0$. If \hyperlink{hyp:Tproperties}{C1} holds for some $T>0$, then for each $t,\delta>0$ with $t+\delta\leq T$, and each ${\overline{\mathbf{w}}}=(\overline{w}_-,\overline{w}_+)\in \R^2$ with 
   \[\overline{w}_-, \overline{w}_+\in \Big[\frac{w_{\mathrm{min}}}{\alpha(t+s)},\frac{\sqrt{c_0}/3}{\alpha(t+s)}\Big], \quad s\in [0,\delta],\]
   the bound
    \begin{align}\Big(\epsilon+\sup_{s\in[0,\delta]}\big(\|\overline{v}(t+s)\|_{H^1_{\overline{\mathbf{w}}}}+
    \|\overline{v}(t+s)\|_{H^1}+
    |c(t+s)-c(t)|\big)\Big)(\sqrt{\delta}+\delta^2)\leq \delta_4\label{eqn:shorttime}\end{align} 
    implies \begin{align}\sup_{s\in[0,\delta]}\|\overline{v}(t+s)\|_{H^1_{\overline{\mathbf{w}}}}\leq C_4\Big( \|\overline{v}(t)\|_{H^1_{\overline{\mathbf{w}}}}+\epsilon \sup_{s\in[0,\delta]}|f(\gamma (t+s))| (\sqrt{\delta}+\delta^2)\Big).\label{eqn:shorttimeestimate}\end{align}
\end{proposition}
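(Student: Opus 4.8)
The plan is to run a Duhamel/fixed-point argument on the mild formulation of \eqref{eqn:strong} on the interval $[t,t+\delta]$, centered at the frozen soliton $\phi_{c(t_\diamond)}$ with $t_\diamond = t$, and to use the stability and smoothing estimates for $\{e^{\mathcal{L}_{c_0}t}\}_{t\geq 0}$ on asymmetrically-weighted spaces established in \Cref{prop:smoothing} (via \Cref{thm:linearstability} in the appendix). First I would fix $t_\diamond = t$, abbreviate $c_\diamond = c(t)$, and write the variation-of-constants formula
\[
\overline{v}(t+s) = e^{\mathcal{L}_{c_\diamond}s}\overline{v}(t) + \int_0^s e^{\mathcal{L}_{c_\diamond}(s-\sigma)}Y(t,\sigma,\overline{v};c,\Omega)\,\d\sigma,
\]
which is justified in $H^{-1}$ (and after applying the weight, in a negative-order weighted Sobolev space) by \Cref{lem:regularity}. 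The orthogonality conditions \eqref{eqn:orthogonality} mean $\overline{v}(t)$ and, crucially, the nonlinear/forcing/modulation terms in $Y$ projected via the modulation equations lie in $\ker P_{c(t)}$ only approximately — so one must be slightly careful: $\overline{v}(t)$ satisfies $\langle \overline{v}(t),\phi_{c(t)}\rangle = \langle \overline{v}(t),\zeta_{c(t)}\rangle = 0$, hence $Q_{c(t)}\overline{v}(t) = \overline{v}(t)$, and I would split $Y = Q_{c_\diamond}Y + P_{c_\diamond}Y$, absorbing the finite-rank piece $P_{c_\diamond}Y$ into the error terms (it is controlled by the same quantities, since $P_{c_\diamond}$ is bounded on $L^2_{\mathbf{w}}$ and $\phi_{c_\diamond},\zeta_{c_\diamond}$ decay). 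Then \Cref{prop:smoothing} gives, for $k\in\{0,1\}$,
\[
\|\partial_x^k e^{\mathcal{L}_{c_\diamond}s}Q_{c_\diamond}g\|_{L^2_{\mathbf{w}}} \leq M s^{-k/2}e^{-\beta s}\|g\|_{L^2_{\mathbf{w}}},
\]
with $\mathbf{w} = \overline{\mathbf{w}}$ admissible because the hypothesis $\overline{w}_\pm \in [w_{\mathrm{min}}/\alpha(t+s), (\sqrt{c_0}/3)/\alpha(t+s)]$ keeps $\overline{w}_\pm$ bounded away from $0$ and below $\sqrt{c_0}$, uniformly over the short interval; this is exactly where \hyperlink{hyp:alphabound}{S2}, \hyperlink{hyp:wmin}{S3} and \hyperlink{hyp:Tproperties}{C1} enter to produce a $\beta>0$ and an $M$ depending only on the fixed data.

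Next I would estimate each term of $Y$ from \eqref{eqn:yts} in the $H^1_{\overline{\mathbf{w}}}$ norm (i.e. $L^2_{\overline{\mathbf{w}}}$ plus $L^2_{\overline{\mathbf{w}}}$ of one derivative), uniformly for $\sigma\in[0,\delta]$. The term $(\mathcal{L}_{c(t+\sigma)} - \mathcal{L}_{c(t)})\overline{v}(t+\sigma)$ is $O(|c(t+\sigma)-c(t)|)\cdot\|\overline{v}(t+\sigma)\|_{H^2_{\overline{\mathbf{w}}}}$-ish, but since $\mathcal{L}_c - \mathcal{L}_{c'} = (c-c')\partial_x - 2\partial_x((\phi_c-\phi_{c'})\cdot)$ and $\phi_c$ is smooth and exponentially localized in $c$, this is controlled by $|c(t+\sigma)-c(t)|\,\|\overline{v}(t+\sigma)\|_{H^1_{\overline{\mathbf{w}}}}$ after using the smoothing estimate to pay for the derivative; the nonlinearity $N(\overline{v}) = -2\overline{v}\partial_x\overline{v} = -\partial_x(\overline{v}^2)$ is handled by writing it as a derivative, so the $k=1$ smoothing bound applies to $\overline{v}^2$, and $\|\overline{v}^2\|_{H^1_{\overline{\mathbf{w}}}}\lesssim \|\overline{v}\|_{L^\infty}\|\overline{v}\|_{H^1_{\overline{\mathbf{w}}}}\lesssim \|\overline{v}\|_{H^1}\|\overline{v}\|_{H^1_{\overline{\mathbf{w}}}}$ by Sobolev embedding and the fact that the weight $e^{\overline{w}_\pm x}$ is bounded below on the relevant half-lines; here the unweighted control $\|\overline{v}(t+s)\|_{H^1}$ appearing in \eqref{eqn:shorttime} is essential. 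The modulation terms $c_t\partial_c\phi$, $\Omega_t\partial_x(\overline{v}+\phi)$, and the forcing $\epsilon f(\gamma(t+\sigma))(\overline{v}+\phi)$ require bounds on $|c_t|$ and $|\Omega_t|$: from the modulation system \eqref{eqn:modulation}, \eqref{eqn:kmatrix}, invertibility of $K(v)$ for small $\|v\|_{L^2_w}$ (equivalently small $\|\overline{v}\|_{L^2_b}$, rescaled), and $c_t = -2c_0\alpha^{-3}\alpha_t$, one gets $|c_t| + |\Omega_t| \lesssim \epsilon|f(\gamma(t+\sigma))| + \|\overline{v}(t+\sigma)\|_{L^2}^2$ (the $v^2$ coming from the $\langle N(v),\cdot\rangle$ block); combined with the weighted/unweighted smallness this feeds back cleanly. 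Collecting everything, the Duhamel integral contributes
\[
\int_0^s M(s-\sigma)^{-1/2}e^{-\beta(s-\sigma)}\big(\epsilon|f| + (\|\overline{v}\|_{H^1} + |c(\cdot)-c(t)|)\|\overline{v}\|_{H^1_{\overline{\mathbf{w}}}} + \epsilon|f|\big)\,\d\sigma,
\]
and after integrating the kernel ($\int_0^\delta \sigma^{-1/2}e^{-\beta\sigma}\d\sigma \leq 2\sqrt{\delta}$, and similar for the $L^2$-part giving a $\delta$ or $\delta^2$ factor from the time-integrated contributions, matching the $\sqrt{\delta}+\delta^2$ in the statement) one obtains, writing $A := \sup_{s\in[0,\delta]}\|\overline{v}(t+s)\|_{H^1_{\overline{\mathbf{w}}}}$,
\[
A \leq C\|\overline{v}(t)\|_{H^1_{\overline{\mathbf{w}}}} + C\epsilon\sup_{s}|f(\gamma(t+s))|(\sqrt{\delta}+\delta^2) + C\big(\epsilon + \sup_s(\|\overline{v}(t+s)\|_{H^1} + |c(t+s)-c(t)|)\big)(\sqrt{\delta}+\delta^2)\, A.
\]

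Finally, the smallness condition \eqref{eqn:shorttime} is precisely what makes the coefficient of $A$ on the right at most $C\delta_4 \leq \tfrac12$ for a suitable choice of $\delta_4$, so the $A$-term absorbs into the left side and yields \eqref{eqn:shorttimeestimate} with $C_4 = 2C$. One technical point to address is that this fixed-point/absorption argument presupposes $A < \infty$ a priori and that the mild solution coincides with the strong solution from \hyperlink{hyp:Tproperties}{C1}; the former follows from the regularity in \eqref{eqn:weightedregularity} together with continuity of $s\mapsto \|\overline{v}(t+s)\|_{H^1_{\overline{\mathbf{w}}}}$ (so one may first run the estimate on $[0,\delta']$ for small $\delta'$ and bootstrap, or simply note finiteness on the compact interval), and the latter is standard semigroup theory once both sides are interpreted in the appropriate negative-order weighted space. \textbf{The main obstacle} I anticipate is bookkeeping the weight: one must verify that a single admissible pair $(\beta, M)$ in \Cref{prop:smoothing} works simultaneously for \emph{all} the shifting weights $\overline{\mathbf{w}}$ allowed by the hypothesis over $s\in[0,\delta]$ — this needs the uniform bounds $w_{\mathrm{min}} \leq \alpha(t+s)\overline{w}_\pm \leq \sqrt{c_0}/3$ and $\alpha(t+s)\in[\alpha_{\mathrm{min}},\alpha_{\mathrm{max}}]$ to pin $\overline{w}_\pm$ into a fixed compact subinterval of $(0,\sqrt{c_0})$, on which $\beta(\overline{w}_\pm) = \min\{\overline{w}_-(c_0 - \overline{w}_-^2), \overline{w}_+(c_0-\overline{w}_+^2)\}$ has a positive lower bound — and, relatedly, ensuring that the constant multiplying $A$ does not secretly depend on $\delta$ in a way that breaks the absorption (it does not, because all $\delta$-dependence is the explicit $\sqrt{\delta}+\delta^2$ from integrating the smoothing kernel).
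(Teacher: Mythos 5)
Your proposal is correct and follows essentially the same route as the paper: Duhamel's formula around the frozen soliton $\phi_{c(t)}$, the splitting $I=P_{c(t)}+Q_{c(t)}$, the asymmetric-weight smoothing estimates of \Cref{prop:smoothing}, termwise bounds on $Y$ via the modulation estimates, and absorption of $\sup_s\|\overline{v}(t+s)\|_{H^1_{\overline{\mathbf{w}}}}$ using the smallness hypothesis. The only place you are vaguer than the paper is the finite-rank piece: the paper computes $e^{(r-s)\mathcal{L}_{c(t)}}P_{c(t)}Y$ explicitly via the Jordan-block relations $\mathcal{L}_{c}\partial_x\phi_{c}=0$, $\mathcal{L}_{c}\partial_c\phi_{c}=\partial_x\phi_{c}$ (note $\zeta_c$ does not decay at $+\infty$, only $\zeta_c\in L^2_{-w}$), and it is precisely the resulting linear-in-time growth $(1+r-s)$ that, upon integration, produces the $\delta+\delta^2$ factor you correctly anticipate.
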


In order to apply this result to the perturbation $v$ in the rescaled frame, it is essential to note that \eqref{eqn:shorttimeestimate} transforms into an estimate between \textit{different} weighted spaces due to the time-dependent rescaling in the $x$-direction via $\alpha(t)$. To remedy this, and to deal with the problematic $x\partial_x {v}$ term in \eqref{eqn:strong2}, we introduce time-dependent weights $\mathbf{w}(t)=\big(w_-(t),w_+(t)\big)$ that increase/decrease at a rate sufficient to compensate rescaling by $\alpha$. More precisely, we assume the following.
\begin{itemize}
\item[\textbf{C2}]{
  \phantomsection\hypertarget{hyp:timevarying}{}\textit{
  Given $T\geq\delta>0$, the increasing function $w_-:\R^+\to \R^+$ and decreasing function $w_+:\R^+\to \R^+$ satisfy
\begin{align}\frac{w_-(t+s/2)}{w_-(t+s)}\leq \frac{{\alpha}(t)}{{\alpha}(t+s)}\leq \frac{w_+(t+s/2)}{w_+(t+s)}\label{eqn:weightcondition}\end{align}
for each $s\in[0,\delta]$ and $t\in [0,T-s]$. Furthermore,
\[w_{\mathrm{min}}\leq w_\pm(t) \leq w, \quad t\in[0,T].\]}}
\end{itemize}
% \begin{condition}\label{hyp:timevarying}
% Given $T\geq\delta>0$, the increasing function $w_-:\R^+\to \R^+$ and decreasing function $w_+:\R^+\to \R^+$ satisfy
% \begin{align}\frac{w_-(t+s/2)}{w_-(t+s)}\leq \frac{{\alpha}(t)}{{\alpha}(t+s)}\leq \frac{w_+(t+s/2)}{w_+(t+s)}\label{eqn:weightcondition}\end{align}
% for each $s\in[0,\delta]$ and $t\in [0,T-s]$. Furthermore,
% \[w_{\mathrm{min}}\leq w_\pm(t) \leq w, \quad t\in[0,T].\]
% \end{condition}
Assumption \hyperlink{hyp:timevarying}{C2} implies that $t\to \tfrac{w_+(t)}{{\alpha}(t)}$ is decreasing and $t\to \tfrac{w_-(t)}{{\alpha}(t)}$ is increasing on $[0,T]$, which essentially means that the weight-functions retain their monotonicity after rescaling. Since $w_-$ and $w_+$ are evaluated at $t+s/2$ in \eqref{eqn:weightcondition}, it furthermore follows that there is a lower bound on their absolute growth rate in the sense that
\[\log\big(\tfrac{w_-}{{\alpha}}\big)^\prime(t) \geq \tfrac{1}{2}\log(w_-)^\prime (t) \quad \text{and} \quad \log\big(\tfrac{w_+}{{\alpha}}\big)^\prime (t)\leq \tfrac{1}{2}\log(w_+)^\prime (t),\quad t\in[0,T].\]
With this condition in place, we formulate the following corollary to \Cref{prop:short}.
\begin{corollary}\label{cor:short}
Assuming \hyperlink{hyp:initial}{S1} - \hyperlink{hyp:wmin}{S3}, there exist constants $\delta_5, C_5>0$ so that the following holds true for each $\epsilon,\gamma>0$. If \hyperlink{hyp:Tproperties}{C1} and \hyperlink{hyp:timevarying}{C2} hold for some $T\geq \delta>0$, then for each $t\in[0,T-\delta]$, the bound
% \begin{align}\sup_{s\in [0,\delta]}\|{v}(t+s)\|_{H^1_{\mathbf{w}(t+s)}}\leq &\epsilon_1 \label{eqn:vass} \\ 
%     \sup_{s\in [0,\delta]}\|{v}(t+s)\|_{H^1}\leq &\epsilon_2 \nonumber
% \end{align}
% with 
\[\Big(\epsilon +\sup_{s\in [0,\delta]}\big(\|{v}(t+s)\|_{H^1_{\mathbf{w}(t+s)}}+\|{v}(t+s)\|_{H^1}+|c(t+s)-c(t)|\big)\Big)(\sqrt{\delta}+\delta^2)\leq \delta_5\]
implies
\[\sup_{s\in [0,\delta]}\|{v}(t+s)\|_{H^1_{\mathbf{w}(t+s)}}\leq C_5 \Big(\|{v}(t)\|_{H^1_{\mathbf{w}(t+\delta/2)}}+\epsilon \sup_{s\in[0,\delta]}|f(\gamma (t+s))| (\sqrt{\delta}+\delta^2)\Big).\]
\end{corollary}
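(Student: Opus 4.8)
The plan is to derive \Cref{cor:short} from \Cref{prop:short} by choosing the constant-weight pair $\overline{\mathbf{w}}$ appropriately at each application, converting the rescaled-frame norm $\|v(t)\|_{H^1_{\mathbf{w}(t)}}$ into an original-frame norm $\|\overline{v}(t)\|_{H^1_{\overline{\mathbf{w}}}}$ via \Cref{lem:scaling}, invoking \Cref{prop:short}, and then translating the resulting bound back. The key point is that by \Cref{lem:scaling}(3), for any base weight $b$ one has $\|v(t)\|_{L^2_{\alpha(t)b}} = \alpha^{3/2}(t)\|\overline{v}(t)\|_{L^2_b}$ and similarly with an extra factor $\alpha$ for the derivative; so a norm of $v$ with weight $\mathbf{w}(t+s)$ equals (up to the bounded factors $\alpha^{3/2}$, $\alpha^{5/2}$, which by \hyperlink{hyp:alphabound}{S2} lie between $\alpha_{\mathrm{min}}^{5/2}$ and $\alpha_{\mathrm{max}}^{5/2}$) a norm of $\overline{v}$ with weight $\overline{\mathbf{w}}(s) := \mathbf{w}(t+s)/\alpha(t+s)$.

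First I would fix $t\in[0,T-\delta]$ and set $\overline{w}_\pm(s) = w_\pm(t+s)/\alpha(t+s)$ for $s\in[0,\delta]$. Condition \hyperlink{hyp:timevarying}{C2} guarantees $w_{\mathrm{min}}\le w_\pm(t+s)\le w$ and $\alpha(t+s)\in[\alpha_{\mathrm{min}},\alpha_{\mathrm{max}}]$, hence $\overline{w}_\pm(s)\in[w_{\mathrm{min}}/\alpha_{\mathrm{max}},\, w/\alpha_{\mathrm{min}}]$; but more importantly \hyperlink{hyp:timevarying}{C2} makes $s\mapsto w_+(t+s)/\alpha(t+s)$ decreasing and $s\mapsto w_-(t+s)/\alpha(t+s)$ increasing. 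This monotonicity is exactly what is needed so that, using \Cref{lem:observations}(2) (which gives $\|g\|_{L^2_{\mathbf{w}}}\le\|g\|_{L^2_{\mathbf b}}$ when $b_-<w_-$, $b_+>w_+$), the norm $\|\overline{v}(t+s)\|_{H^1_{\overline{\mathbf{w}}(s)}}$ is dominated by $\|\overline{v}(t+s)\|_{H^1_{\overline{\mathbf{w}}(\delta/2)}}$ when we want an upper bound, and conversely $\|\overline{v}(t)\|_{H^1_{\overline{\mathbf{w}}(\delta/2)}}$ is dominated by $\|\overline{v}(t)\|_{H^1_{\overline{\mathbf{w}}(0)}}$ — but here is the subtlety: \Cref{prop:short} with a \emph{fixed} weight $\overline{\mathbf{w}}$ requires $\overline{w}_\pm\in[w_{\mathrm{min}}/\alpha(t+s), (\sqrt{c_0}/3)/\alpha(t+s)]$ for all $s\in[0,\delta]$. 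I would therefore apply \Cref{prop:short} with the \emph{single} fixed pair $\overline{\mathbf{w}} = \overline{\mathbf{w}}(\delta/2) = \mathbf{w}(t+\delta/2)/\alpha(t+\delta/2)$; the monotonicity from \hyperlink{hyp:timevarying}{C2} plus $w_{\mathrm{min}}\le w_\pm\le w<\sqrt{c_0}/3$ (after choosing $\alpha_{\mathrm{min}}, \alpha_{\mathrm{max}}$ close enough to $1$, or absorbing into the eventual global constants) ensures this fixed pair sits in the admissible window for every $s\in[0,\delta]$.

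With the weight chosen, the argument is: (i) bound the hypothesis quantity for \Cref{prop:short} — namely $\epsilon + \sup_s(\|\overline{v}(t+s)\|_{H^1_{\overline{\mathbf w}}} + \|\overline{v}(t+s)\|_{H^1} + |c(t+s)-c(t)|)$ — by a constant (depending only on $\alpha_{\mathrm{min}},\alpha_{\mathrm{max}}$) times the corresponding $v$-quantity appearing in the statement of \Cref{cor:short}, using \Cref{lem:scaling}(3) for the weighted piece, the identity $\|\overline{v}(t+s)\|_{H^1} \le \alpha_{\mathrm{min}}^{-5/2}\|v(t+s)\|_{H^1}$-type bounds (noting the unweighted $H^1$ norms also transform by bounded powers of $\alpha$), and $|c(t+s)-c(t)|$ being frame-independent; hence choosing $\delta_5$ small enough relative to $\delta_4$ makes \eqref{eqn:shorttime} hold; (ii) apply \Cref{prop:short} to get $\sup_s\|\overline{v}(t+s)\|_{H^1_{\overline{\mathbf w}}}\le C_4(\|\overline{v}(t)\|_{H^1_{\overline{\mathbf w}}} + \epsilon\sup_s|f(\gamma(t+s))|(\sqrt\delta+\delta^2))$; (iii) convert back: the left side dominates $c\cdot\|v(t+s)\|_{H^1_{\mathbf w(t+s)}}$ via \Cref{lem:scaling}(3) together with \Cref{lem:observations}(2) (using that $\overline{\mathbf w} = \mathbf w(t+\delta/2)/\alpha(t+\delta/2)$ is "wider" than $\mathbf w(t+s)/\alpha(t+s)$ for the relevant $s$ by monotonicity), while $\|\overline{v}(t)\|_{H^1_{\overline{\mathbf w}}} = \|\overline{v}(t)\|_{H^1_{\mathbf w(t+\delta/2)/\alpha(t+\delta/2)}}$ equals, again by \Cref{lem:scaling}(3), a bounded multiple of $\|v(t)\|_{H^1_{\mathbf w(t+\delta/2)}}$ — which is precisely the term on the right-hand side of \Cref{cor:short}. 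Collecting the bounded powers of $\alpha_{\mathrm{min}},\alpha_{\mathrm{max}}$ into a new constant $C_5$ finishes the proof.

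The main obstacle I anticipate is the careful bookkeeping of which direction each monotonicity inequality must go: \Cref{prop:short} is an estimate of $\sup_s\|\overline v(t+s)\|$ at the \emph{variable-in-$s$} rescaled weights in terms of $\|\overline v(t)\|$ at a \emph{single} weight, so I must make sure the single fixed weight $\overline{\mathbf w}$ I feed into \Cref{prop:short} is simultaneously (a) admissible for all $s\in[0,\delta]$, (b) wide enough that $\|\overline v(t+s)\|_{H^1_{\overline{\mathbf w}}}$ controls $\|v(t+s)\|_{H^1_{\mathbf w(t+s)}}$ after rescaling, and (c) such that $\|\overline v(t)\|_{H^1_{\overline{\mathbf w}}}$ is \emph{exactly} the $\|v(t)\|_{H^1_{\mathbf w(t+\delta/2)}}$ appearing in the conclusion. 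Choosing $\overline{\mathbf w}$ to be the rescaled weight at the midpoint $t+\delta/2$ reconciles all three: (c) is an equality by construction and \Cref{lem:scaling}, and (a)–(b) follow from the monotonicity of $s\mapsto w_\pm(t+s)/\alpha(t+s)$ recorded right after \hyperlink{hyp:timevarying}{C2}, since the midpoint weight $w_+/\alpha$ is smaller than its value at $s<\delta/2$ and larger than at $s>\delta/2$, and symmetrically for $w_-/\alpha$. A second, minor point is verifying that the unweighted $H^1$ norm transforms by a bounded factor under the rescaling $v(t,x)=\alpha^2(t)\overline v(t,\alpha(t)x)$; this is a direct scaling computation analogous to \Cref{lem:scaling}(3) with $b=0$, yielding $\|v(t)\|_{L^2}^2=\alpha^3\|\overline v(t)\|_{L^2}^2$ and $\|\partial_x v(t)\|_{L^2}^2=\alpha^5\|\partial_x\overline v(t)\|_{L^2}^2$, both controlled by $\alpha_{\mathrm{max}}^5$ and $\alpha_{\mathrm{min}}^3$.
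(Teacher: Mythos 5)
Your overall strategy is the paper's: pass to the original frame with \Cref{lem:scaling}, apply \Cref{prop:short} with a single fixed asymmetric weight, and pass back using the monotonicity of $s\mapsto w_\pm(t+s)/\alpha(t+s)$ encoded in \hyperlink{hyp:timevarying}{C2}. However, your choice of the fixed weight --- the midpoint $\overline{\mathbf{w}}=\mathbf{w}(t+\delta/2)/\alpha(t+\delta/2)$ --- does not do what you claim, in two places. First, to verify the hypothesis \eqref{eqn:shorttime} of \Cref{prop:short} you must dominate $\sup_{s}\|\overline{v}(t+s)\|_{H^1_{\overline{\mathbf{w}}}}$ by $\sup_s\|v(t+s)\|_{H^1_{\mathbf{w}(t+s)}}$; this requires the fixed weight to be \emph{narrower} than $\mathbf{w}(t+s)/\alpha(t+s)$ (i.e.\ $\overline{w}_+\le w_+(t+s)/\alpha(t+s)$ and $\overline{w}_-\ge w_-(t+s)/\alpha(t+s)$) for \emph{every} $s\in[0,\delta]$. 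The midpoint value of the decreasing function $w_+/\alpha$ sits below its values for $s<\delta/2$ but \emph{above} them for $s>\delta/2$, and there is no constant $C$ with $\|g\|_{L^2_{b_+}}\le C\|g\|_{L^2_{w_+}}$ on $[0,\infty)$ when $b_+>w_+$; so the smallness hypothesis cannot be transferred on $(\delta/2,\delta]$. Second, your step (c) is not an equality: by \Cref{lem:scaling}, $\|\overline{v}(t)\|_{H^1_{\overline{\mathbf{w}}}}$ pulls back to a norm of $v(t)$ with weight $\alpha(t)\overline{\mathbf{w}}=\alpha(t)\mathbf{w}(t+\delta/2)/\alpha(t+\delta/2)$, and since $\alpha(t)/\alpha(t+\delta/2)$ may exceed $1$ (the sign of $f$ is not fixed), condition \eqref{eqn:weightcondition} with increment $\delta/2$ only compares this weight to $\mathbf{w}(t+\delta/4)$, whose norm \emph{dominates} the target $\|v(t)\|_{H^1_{\mathbf{w}(t+\delta/2)}}$ rather than being dominated by it. So the right-hand side of the corollary is not recovered.

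The repair is the paper's choice: take the \emph{endpoint} weight $\overline{\mathbf{b}}=\mathbf{w}(t+\delta)/\alpha(t+\delta)$. By the monotonicity of $w_\pm/\alpha$ this is the narrowest rescaled weight on the whole interval, so the hypothesis transfer works for all $s\in[0,\delta]$; and \eqref{eqn:weightcondition} applied with increment $\delta$ gives precisely $\alpha(t)\overline{b}_+\le w_+(t+\delta/2)$ and $\alpha(t)\overline{b}_-\ge w_-(t+\delta/2)$, so that by \Cref{lem:observations} the pulled-back quantity $\|v(t)\|_{H^1_{\alpha(t)\overline{\mathbf{b}}}}$ is bounded by exactly the term $\|v(t)\|_{H^1_{\mathbf{w}(t+\delta/2)}}$ appearing in the conclusion. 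The half-step $t+s/2$ in \eqref{eqn:weightcondition} exists precisely to make this endpoint-to-midpoint comparison work; the midpoint choice cannot exploit it. You correctly identified the weight bookkeeping as the crux of the argument, but resolved it with the wrong reference time.
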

In preparation for the proof of \Cref{prop:short}, we 
examine \eqref{eqn:modulation} and
show that $\alpha_t, \Omega_t$ can be controlled by the perturbation $v$.  In contrast to \cite{pegoweinstein}, we deal with the presence of the forcing term and require slightly sharper control on the modulation parameters.

\begin{lemma}\label{lem:estimates}
   Assuming \hyperlink{hyp:initial}{S1} and \hyperlink{hyp:alphabound}{S2}, there exist constants $\delta_6, C_6>0$ so that the following holds true for each $\epsilon,\gamma>0$. If \hyperlink{hyp:Tproperties}{C1} holds for some $T>0$, then for each $\mathbf{b}=(b_-,b_+)\in \R^2$ with $b_-, b_+\in (0,\sqrt{c_*}/3]$ and $t \in [0,T]$, the bound
\[\|v(t)\|_{L^2_{\mathbf{b}}}\leq b_+^{1/2}\delta_6\]
implies
    \begin{align}
    &|\alpha_t(t)+\tfrac{2}{3}\alpha(t) \epsilon f(\gamma t)|+|\Omega_t(t)-\tfrac{2}{3}c_0^{-1/2}\alpha(t)\epsilon f(\gamma t)| \nonumber\\&\leq  C_6 \epsilon|f(\gamma t)| b_+^{-1/2}\|v(t)\|_{L^2_{\mathbf{b}}}
    +C_6\|v(t)\|^2_{L^2_{\mathbf{b}}}, \label{eqn:modulationestimproved}
    \end{align}
and hence
  \begin{align}
        |\alpha_t(t)|+|\Omega_t(t)|\leq C_6\Big( \epsilon|f(\gamma t)| (1+b_+^{-1/2}\|v(t)\|_{L^2_{\mathbf{b}}})+\|v(t)\|^2_{L^2_{\mathbf{b}}}\Big).\label{eqn:modulationestimate}
    \end{align}
\end{lemma}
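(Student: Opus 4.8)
The plan is to view the right‑hand side of \eqref{eqn:modulation} as a map $v\mapsto G(v)\in\R^2$ and to estimate $G(v)-G(0)$, since $G(0)=-\alpha\epsilon f(\gamma t)\bigl(\tfrac23,\,-\tfrac23 c_0^{-1/2}\bigr)$ is precisely the leading‑order vector subtracted off in \eqref{eqn:modulationestimproved}, as computed right after \eqref{eqn:modulation}. Abbreviating by $b(v)=\bigl(\langle\phi_{c_0}+v,\phi_{c_0}\rangle,\langle\phi_{c_0}+v,\zeta_{c_0}\rangle\bigr)$ and $n(v)=\bigl(\langle N(v),\phi_{c_0}\rangle,\langle N(v),\zeta_{c_0}\rangle\bigr)$ the two column vectors in \eqref{eqn:modulation}, so that $G(v)=-\alpha\epsilon f(\gamma t)K^{-1}(v)b(v)-\alpha^{-2}K^{-1}(v)n(v)$, I would split
\[G(v)-G(0)=-\alpha\epsilon f(\gamma t)\Bigl[\bigl(K^{-1}(v)-K^{-1}(0)\bigr)b(0)+K^{-1}(v)\bigl(b(v)-b(0)\bigr)\Bigr]-\alpha^{-2}K^{-1}(v)n(v)\]
and bound the three pieces separately, absorbing all $\alpha$-factors through \hyperlink{hyp:alphabound}{S2}.

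The analytic core is a short list of weighted bilinear estimates for $\mathbf b=(b_-,b_+)$ with $b_\pm\in(0,\sqrt{c_*}/3]$, obtained by splitting each pairing over $(-\infty,0)$ and $(0,\infty)$, inserting the weights $e^{\pm b_\pm x}$, and applying Cauchy--Schwarz on each half‑line. The outcome is: if a profile $g$ decays faster than $e^{-b_\pm|x|}$ as $x\to\pm\infty$ then $|\langle v,g\rangle|\le C\|v\|_{L^2_{\mathbf b}}$, and if it decays faster than $e^{-2b_\pm|x|}$ then also $|\langle v^2,g\rangle|\le C\|v\|^2_{L^2_{\mathbf b}}$, with $C$ depending only on the data in \hyperlink{hyp:initial}{S1}. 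Since the orthogonal decomposition puts $c_0$ close to $c_*$ and the strict weight bound $w<\tfrac13\sqrt{c_*}$ gives $2b_\pm\le\tfrac23\sqrt{c_*}<\sqrt{c_0}$, the profiles $\phi_{c_0}$, $\partial_x\phi_{c_0}$, $\partial_c\phi_{c_0}$, $x\partial_x\phi_{c_0}$, $x\partial_c\phi_{c_0}$ all meet these requirements. The single exception is $\zeta_{c_0}$, which tends to a nonzero constant as $x\to+\infty$: there $\|\zeta_{c_0}\chi_{x\ge0}\|_{L^2_{-b_+}}\sim b_+^{-1/2}$, so $|\langle v,\zeta_{c_0}\rangle|\le Cb_+^{-1/2}\|v\|_{L^2_{\mathbf b}}$, an extra factor $b_+^{-1/2}$. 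Crucially, no such loss contaminates $n(v)$: integrating by parts and using $\partial_x\zeta_{c_0}=\partial_c\phi_{c_0}$ gives $\langle N(v),\zeta_{c_0}\rangle=\langle v^2,\partial_c\phi_{c_0}\rangle$, and $\partial_c\phi_{c_0}$ decays, so $|n(v)|\le C\|v\|^2_{L^2_{\mathbf b}}$ with no $b_+$-dependence.

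Next I would feed these into $K$. Computing $K(v)-K(0)$ entrywise — moving $x\partial_x$ and $\partial_x$ onto the test functions by integration by parts, again using $\partial_x\zeta_{c_0}=\partial_c\phi_{c_0}$ — shows every entry is $O(\|v\|_{L^2_{\mathbf b}})$ except the $(2,1)$ entry, which contains $\langle v,\zeta_{c_0}\rangle$ and is $O(b_+^{-1/2}\|v\|_{L^2_{\mathbf b}})$; since $b_+\le\tfrac13\sqrt{c_*}$ is bounded, in all $\|K(v)-K(0)\|\le Cb_+^{-1/2}\|v\|_{L^2_{\mathbf b}}$. The hypothesis $\|v\|_{L^2_{\mathbf b}}\le b_+^{1/2}\delta_6$ turns this into $\le C\delta_6$, so for $\delta_6$ small a Neumann series gives invertibility of $K(v)$ with $\|K^{-1}(v)\|\le C$ and $\|K^{-1}(v)-K^{-1}(0)\|\le Cb_+^{-1/2}\|v\|_{L^2_{\mathbf b}}$. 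In the three‑term split: the first bracket term is $\le Cb_+^{-1/2}\|v\|_{L^2_{\mathbf b}}$ (it acts on the fixed vector $b(0)$); the second is $\le Cb_+^{-1/2}\|v\|_{L^2_{\mathbf b}}$ because $b(v)-b(0)=\bigl(\langle v,\phi_{c_0}\rangle,\langle v,\zeta_{c_0}\rangle\bigr)$ is $O(b_+^{-1/2}\|v\|_{L^2_{\mathbf b}})$ and $\|K^{-1}(v)\|\le C$; and the last piece is $\le C\|v\|^2_{L^2_{\mathbf b}}$. Restoring the $\epsilon f(\gamma t)$-prefactors yields \eqref{eqn:modulationestimproved}, and \eqref{eqn:modulationestimate} follows by adding $|G(0)|\le C\epsilon|f(\gamma t)|$, which is bounded via $\alpha\le\alpha_{\mathrm{max}}$ and $c_0$ bounded below.

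I expect the main obstacle to be the careful bookkeeping in the bilinear estimates — pinning down that the degenerate factor $b_+^{-1/2}$ enters only through pairings against $\zeta_{c_0}$, hence affects only the $(2,1)$ entry of $K$ and the $\zeta$-slot of $b(v)$, while $n(v)$ escapes it entirely via $\partial_x\zeta_{c_0}=\partial_c\phi_{c_0}$, and that the strict bound $w<\tfrac13\sqrt{c_*}$ must be used so that the quadratic pairings $\langle v^2,\partial_x\phi_{c_0}\rangle$ and $\langle v^2,\partial_c\phi_{c_0}\rangle$ converge. The Neumann‑series inversion of $K(v)$ and the remaining algebra are routine.
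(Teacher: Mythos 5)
Your proposal is correct and follows essentially the same route as the paper: the paper's proof rewrites \eqref{eqn:modulation} exactly as your three-term split $-\alpha\epsilon f\,(K^{-1}(v)-K^{-1}(0))b(0)-\alpha\epsilon f\,K^{-1}(v)(b(v)-b(0))-\alpha^{-2}K^{-1}(v)n(v)$, isolates the $b_+^{-1/2}$ loss in $\|\zeta_{c_0}\|_{L^2_{-\mathbf b}}$, removes it from $n(v)$ via $\langle \partial_x(v^2),\zeta_{c_0}\rangle=-\langle v^2,\partial_c\phi_{c_0}\rangle$, and inverts $K(v)$ by entrywise perturbation of $K(0)$ under the smallness hypothesis $\|v\|_{L^2_{\mathbf b}}\leq b_+^{1/2}\delta_6$. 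Your observation that only the $\zeta_{c_0}$-pairings carry the degenerate factor is a slight refinement of the paper's uniform entrywise bound, but the argument is otherwise identical.
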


\begin{proof}
Let us write $v=v(t)$ for brevity, and rewrite \eqref{eqn:modulation} as
\begin{align}
    \begin{bmatrix}
        {\alpha}_t+\tfrac{2}{3}\alpha(t) \epsilon f(\gamma t)\\
        {\Omega}_t-\tfrac{2}{3}c_0^{-1/2}\alpha(t)\epsilon f(\gamma t)
    \end{bmatrix}
=&-{\alpha}\epsilon{f}(\gamma t)K^{-1}({v})\begin{bmatrix}
        \langle {v},\phi_{c_0}\rangle\\
        \langle {v},\zeta_{c_0}\rangle 
    \end{bmatrix}
        \nonumber
    \\
    &-\alpha f(\gamma t)(K^{-1}({v})-K^{-1}(0))\begin{bmatrix}
        \langle \phi_{c_0},\phi_{c_0}\rangle\\
        \langle \phi_{c_0},\zeta_{c_0}\rangle 
    \end{bmatrix}\nonumber\\
    &-{\alpha}^{-2}K^{-1}({v})\begin{bmatrix}
        \langle N({v}),\phi_{c_0}\rangle\\
        \langle N({v}),\zeta_{c_0}\rangle 
    \end{bmatrix} \label{eqn:modulationsharp}.\end{align}
Here we have used that
\[K^{-1}(0)\begin{bmatrix}
        \langle \phi_{c_0},\phi_{c_0}\rangle\\
        \langle \phi_{c_0},\zeta_{c_0}\rangle 
    \end{bmatrix}=\begin{bmatrix}
        \frac{2}{3}\\
        -\frac{2}{3}c_0^{-1/2}
    \end{bmatrix}.\]
Setting out to control $K^{-1}(v)$, we note that
\[K(0)=\begin{bmatrix}
9c_0^{3/2}& 0\\
9 &\tfrac{9}{2}c_0^{1/2}
\end{bmatrix}\]
is invertible, so that we can find constants $\tilde{C}_1,\tilde{C}_2>0$ that ensure $\|A^{-1}\|_{\mathrm{op}}\leq \tilde{C}_2$ for all $A\in \R^{2\times 2}$ which satisfy
\[|A_{ij}-K_{ij}(0)|\leq \tilde{C}_1, \quad  (i,j) \in \{1,2\}^2.\]
Here, $\|\cdot\|_{\mathrm{op}}$ denotes the operator-norm on $(\R^2, \|\cdot\|_1)$, chosen for convenience in the computations below. Now note that
\begin{align*}
    |K_{ij}(v)-K_{ij}(0)|\stackrel{\eqref{eqn:kmatrix}}{\leq} &\Big((2c_0+1)\|\partial_c\phi_{c_0}\|_{L^2_{-\mathbf{b}}}+\|\partial_x\phi_{c_0}\|_{L^2_{-{\mathbf{b}}}}\Big)\|v\|_{L^2_{\mathbf{b}}}\\
    &+\big(\|\zeta_{c_0}\|_{L^2_{-\mathbf{b}}}+\|x\partial_c\phi_{c_0}\|_{L^2_{-{\mathbf{b}}}}\big)\|v\|_{L^2_{\mathbf{b}}}
\end{align*}
for all $ (i,j) \in \{1,2\}^2$. Since $\partial_c\phi_{c_0}$ and $ \partial_x\phi_{c_0}$ decay exponentially as $|x|\to \infty$, we can estimate their weighted norm by a constant that does not depend on $\mathbf{b}$. The function $\zeta_{c_0}$, however, tends to $\int_{\R}\partial_c \phi_{c_0} \d x=3c_0^{-1/2}$ as $x\to \infty$, and is not an $L^2$-function. We therefore estimate
\begin{align*}
    \|\zeta_{c_0}\|^2_{L^2_{-\mathbf{b}}}\!=\!\int_{-\infty}^0 e^{-2b_- x}\zeta_{c_0}^2 (x)\d x\!+\!\int_{0}^\infty e^{-2b_+ x}\zeta_{c_0}^2 (x)\d x
    \leq \int_{-\infty}^0\zeta_{c_0}^2 (x)\d x+\frac{\|\zeta_{c_0}\|^2_{L^\infty}}{2b_+}
\end{align*}
and have
\begin{align}
    |K_{ij}(v)-K_{ij}(0)|\leq \ & \tilde{C}_3b_+^{-1/2} \|v\|_{L^2_{\mathbf{b}}}\label{eqn:entryest}
\end{align}
for all $ (i,j) \in \{1,2\}^2$ and some constant $\tilde{C}_3>0$.
% \begin{align*}\tilde{C}_3=\sup_{b_-,b_+\in[0,\sqrt{c_*/3}]}((2c_0+1)\|\partial_c\phi_{c_0}\|_{L^2_{-\mathbf{b}}}+\|\partial_x\phi_{c_0}\|_{L^2_{-{\mathbf{b}}}}+\|\zeta_{c_0}\|_{L^2_{-\mathbf{b}}}+\|x\partial_c\phi_{c_0}\|_{L^2_{-{\mathbf{b}}}}).\end{align*}
in case $\delta_6\leq b_+^{1/2}\tfrac{\tilde{C}_1}{\tilde{C}_3} $, it follows via \eqref{eqn:entryest} that $\|K^{-1}(v)\|_{\mathrm{op}}\leq \tilde{C}_2 $. Turning to the term $K^{-1}({v})-K^{-1}(0)$ in \eqref{eqn:modulationsharp}, we note that
\begin{align*}\|K^{-1}({v})-K^{-1}(0)\|_{\mathrm{op}}&=\|K^{-1}(0)\big(K(v)-K(0)\big)K^{-1}(v)\|_{\mathrm{op}}\\
&\leq\|K^{-1}(0)\|_{\mathrm{op}}\|K(v)-K(0)\|_{\mathrm{op}}\|K^{-1}(v)\|_{\mathrm{op}}\\
&\leq \tilde{C}_4 b_+^{-1/2} \|v\|_{L^2_{\mathbf{b}}}.\end{align*}
We proceed by estimating
\begin{align*}
     |\langle N(v),\phi_{c_0}\rangle|+|\langle N(v),\zeta_{c_0}\rangle|=&|\langle \partial_x(v^2),\phi_{c_0}\rangle|+|\langle \partial_x(v^2),\zeta_{c_0}\rangle|\\
     =&|\langle v^2,\partial_x\phi_{c_0}\rangle|+|\langle v^2,\partial_c\phi_{c_0}\rangle|\\
     \leq& \big\|e^{-2b_- \cdot}\chi_{x\leq 0}(|\partial_x\phi_{c_0}|+|\partial_c\phi_{c_0}|)\big\|_{L^\infty}\|v\|^2_{L^2_{\mathbf{b}}}\\
     \leq&\tilde{C}_5\|v\|^2_{L^2_{\mathbf{b}}},
\end{align*}
together with
\begin{align*}
    |\langle {v},\phi_{c_0}\rangle|+|
        \langle {v},\zeta_{c_0}\rangle|\leq& \|\phi_{c_0}\|_{L^2_{-\mathbf{b}}}\|v\|_{L^2_{\mathbf{b}}}+\tilde{C}_3b_+^{-1/2}\|v\|_{L^2_{\mathbf{b}}}.
\end{align*}
We conclude via \eqref{eqn:modulationsharp} that
\begin{align*}
    & |\alpha_t+\tfrac{2}{3}\alpha(t) \epsilon f(\gamma t)|+|\Omega_t-\tfrac{2}{3}c_0^{-1/2}\alpha(t)\epsilon f(\gamma t)|\\
    &\leq \alpha_{\mathrm{max}} \epsilon |f(\gamma t)|\tilde{C}_2 \Big(\|\phi_{c_0}\|_{L^2_{-\mathbf{b}}}\|v\|_{L^2_{\mathbf{b}}}+\tilde{C}_3b_+^{-1/2}\|v\|_{L^2_{\mathbf{b}}}\Big)\\
    &\quad +\alpha_{\mathrm{max}} \epsilon |f(\gamma t)|\tilde{C}_4  \Big(\|\phi_{c_0}\|_{L^2}^2+|\langle \phi_{c_0},\zeta_{c_0}\rangle|\Big)b_+^{-1/2}\|v\|_{L^2_{\mathbf{b}}}
     +\alpha_{\mathrm{min}}^{-2}\tilde{C}_2 \tilde{C}_5 \|v\|^2_{L^2_{\mathbf{b}}}.   \qedhere
\end{align*}
\end{proof}
Using \Cref{lem:estimates}, it is now straightforward to control the term $Y(t_\diamond,s,\overline{v};c,\Omega)$ introduced in \eqref{eqn:yts} in terms of $\overline{v}$. We note, though, that \Cref{lem:estimates} provides control over $\alpha_t$ and $\Omega_t$ in terms of a weighted norm of $v$, instead of $\overline{v}$. We remedy this using item 1 of \Cref{lem:scaling}, taking care to apply a rescaled weight. 

\begin{corollary}\label{cor:yts}
Assuming \hyperlink{hyp:initial}{S1} - \hyperlink{hyp:wmin}{S3}, there exists a constant $C_7>0$ so that the following holds true for each $\epsilon,\gamma>0$. If \hyperlink{hyp:Tproperties}{C1} holds for some $T>0$, then for any $t_\diamond,s\geq 0$ with $t_\diamond+s\in[0,T]$ and $\overline{\mathbf{w}}=(\overline{w}_-,\overline{w}_+)\in \R^2$ with 
% \[\alpha (t_\diamond+s) w_-\in[0,\sqrt{c_0}/3]\quad \text{and} \quad \alpha (t_\diamond+s) w_+\in[w_{\mathrm{min}},\sqrt{c_0}/3],\] 
\[\overline{w}_-, \overline{w}_+\in \Big[\frac{w_{\mathrm{min}}}{\alpha(t_\diamond+s)},\frac{\sqrt{c_0}/3}{\alpha(t_\diamond+s)}\Big],\]
the bound 
\[\|\overline{v}(t_\diamond+s)\|_{L^2_{\overline{\mathbf{w}}}}\leq \alpha(t_\diamond+s)^{-1}\delta_6 \overline{w}_+^{1/2}\] 
implies
    \begin{align}
        \|Y(t_\diamond,s,\overline{v};c,\Omega)\|_{L^2_{\overline{\mathbf{w}}}}\leq&C_7\Big(|c(t_\diamond+s)-c(t_\diamond)|+\epsilon |f(\gamma (t_\diamond+s))| (1+\|\overline{v}(t_\diamond+s)\|_{H^1_{\overline{\mathbf{w}}}})\nonumber\\
    &+\|\overline{v}(t_\diamond+s)\|^2_{H^1_{\overline{\mathbf{w}}}}+\|\overline{v}(t_\diamond+s)\|_{H^1}\Big)\|\overline{v}(t_\diamond+s)\|_{H^1_{\overline{\mathbf{w}}}}\nonumber\\
    &+C_6\epsilon \big|f\big(\gamma (t_\diamond+s)\big)\big|.\label{eqn:Rts}
    \end{align}
\end{corollary}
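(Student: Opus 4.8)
The plan is to estimate each of the five terms appearing in the definition \eqref{eqn:yts} of $Y(t_\diamond,s,\overline{v};c,\Omega)$ separately in the $L^2_{\overline{\mathbf{w}}}$-norm, then collect. Throughout, I would abbreviate $t=t_\diamond+s$ and suppress it where possible. The crucial point is that all the weighted estimates are to be carried out in $L^2_{\overline{\mathbf{w}}}$, with $\overline{\mathbf{w}}$ a rescaled version of a fixed weight (the hypothesis $\overline{w}_\pm \in [w_{\mathrm{min}}/\alpha(t),(\sqrt{c_0}/3)/\alpha(t)]$ is exactly what lets us apply \Cref{lem:estimates} after rescaling, and also keeps the weights in the admissible range $(0,\sqrt{c_0})$ needed for the soliton's exponential decay rate $\sqrt{c_0}$ to dominate).

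First, for the term $\bigl(\mathcal{L}_{c(t)}-\mathcal{L}_{c(t_\diamond)}\bigr)\overline{v}(t)$: from the explicit form \eqref{eqn:linearop}, $\mathcal{L}_{c(t)}-\mathcal{L}_{c(t_\diamond)} = \bigl(c(t)-c(t_\diamond)\bigr)\partial_x - 2\partial_x\bigl((\phi_{c(t)}-\phi_{c(t_\diamond)})\cdot\bigr)$, and since $c\mapsto\phi_c$ is smooth with exponentially decaying $x$- and $c$-derivatives, a mean value estimate bounds the weighted operator norm by $C|c(t)-c(t_\diamond)|$, giving $\leq C|c(t)-c(t_\diamond)|\,\|\overline{v}(t)\|_{H^1_{\overline{\mathbf{w}}}}$ (one derivative of $\overline{v}$ enters through the $\partial_x$'s; the weighted multiplication by the localized $\phi_{c(t)}-\phi_{c(t_\diamond)}$ and its derivative costs nothing thanks to exponential decay, uniformly in the admissible range of $\overline{\mathbf{w}}$). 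Second, the KdV nonlinearity $N(\overline{v})=-2\overline{v}\,\partial_x\overline{v}$ is handled by writing $N(\overline{v})=-\partial_x(\overline{v}^2)$ and estimating $\|\partial_x(\overline{v}^2)\|_{L^2_{\overline{\mathbf{w}}}}\leq C\|\overline{v}\|_{L^\infty}\|\partial_x\overline{v}\|_{L^2_{\overline{\mathbf{w}}}}\leq C\|\overline{v}\|_{H^1}\|\overline{v}\|_{H^1_{\overline{\mathbf{w}}}}$, using the one-sided nature of the weight and the Sobolev embedding $H^1\hookrightarrow L^\infty$; this produces the $\|\overline{v}\|_{H^1}\cdot\|\overline{v}\|_{H^1_{\overline{\mathbf{w}}}}$ contribution.

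Third and fourth, the modulation terms $-c_t(t)\partial_c\phi_{c(t)}$ and $\Omega_t(t)\partial_x(\overline{v}(t)+\phi_{c(t)})$: here I would invoke \Cref{lem:estimates}. Applying it requires translating the smallness hypothesis on $\|\overline{v}(t)\|_{L^2_{\overline{\mathbf{w}}}}$ into one on $\|v(t)\|_{L^2_{\mathbf{b}}}$ with $\mathbf{b}=\alpha(t)\overline{\mathbf{w}}\in(0,\sqrt{c_*}/3]^2$; by item 3 of \Cref{lem:scaling} we have $\|v(t)\|_{L^2_{\alpha(t)\overline{\mathbf{w}}}}=\alpha(t)^{3/2}\|\overline{v}(t)\|_{L^2_{\overline{\mathbf{w}}}}$, so the stated bound $\|\overline{v}(t)\|_{L^2_{\overline{\mathbf{w}}}}\leq\alpha(t)^{-1}\delta_6\overline{w}_+^{1/2}$ becomes exactly $\|v(t)\|_{L^2_{\mathbf{b}}}\leq b_+^{1/2}\delta_6$ (using $b_+=\alpha(t)\overline{w}_+$). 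Then $c_t=-2c_0\alpha^{-3}\alpha_t$ and \eqref{eqn:modulationestimate} give $|c_t|+|\Omega_t|\leq C\bigl(\epsilon|f|(1+b_+^{-1/2}\|v\|_{L^2_{\mathbf{b}}})+\|v\|^2_{L^2_{\mathbf{b}}}\bigr)$; rescaling back via \Cref{lem:scaling} turns $b_+^{-1/2}\|v(t)\|_{L^2_{\mathbf{b}}}$ into $\alpha(t)\|\overline{v}(t)\|_{L^2_{\overline{\mathbf{w}}}}$ up to the bounded factor $\alpha(t)^{3/2}/b_+^{1/2}=\alpha(t)/\overline{w}_+^{1/2}$ — so modulo constants depending only on $\alpha_{\min},\alpha_{\max},w_{\min}$, one gets $|c_t|+|\Omega_t|\leq C(\epsilon|f|+\epsilon|f|\,\|\overline{v}\|_{H^1_{\overline{\mathbf{w}}}}+\|\overline{v}\|^2_{H^1_{\overline{\mathbf{w}}}})$. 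Multiplying by $\|\partial_c\phi_{c(t)}\|_{L^2_{\overline{\mathbf{w}}}}$, respectively by $\|\partial_x\phi_{c(t)}\|_{L^2_{\overline{\mathbf{w}}}}+\|\partial_x\overline{v}(t)\|_{L^2_{\overline{\mathbf{w}}}}$ (both uniformly controlled in the admissible weight range by exponential decay of the soliton), and observing that the ``bare'' $\epsilon|f|$ part attached to $\partial_x\phi_{c(t)}$ is precisely the free term $C_6\epsilon|f(\gamma(t_\diamond+s))|$ in \eqref{eqn:Rts} while all the rest carries a factor $\|\overline{v}(t)\|_{H^1_{\overline{\mathbf{w}}}}$, one recovers the displayed bound. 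The remaining (fifth) term $\epsilon f(\gamma t)(\overline{v}(t)+\phi_{c(t)})$ splits as $\epsilon|f|\,\|\overline{v}(t)\|_{L^2_{\overline{\mathbf{w}}}}$ (absorbed into the $\epsilon|f|\,\|\overline{v}\|_{H^1_{\overline{\mathbf{w}}}}$ term) plus $\epsilon|f|\,\|\phi_{c(t)}\|_{L^2_{\overline{\mathbf{w}}}}$, which likewise folds into the $C_6\epsilon|f|$ free term after adjusting the constant; collecting the five bounds and renaming constants yields \eqref{eqn:Rts}.

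The main obstacle, and the only genuinely delicate point, is the bookkeeping of $\alpha(t)$-powers when transferring \Cref{lem:estimates} (phrased in terms of $\|v\|_{L^2_{\mathbf{b}}}$ with $\mathbf{b}$ in the \emph{fixed} range $(0,\sqrt{c_*}/3]$) into an estimate in terms of $\|\overline{v}\|_{H^1_{\overline{\mathbf{w}}}}$ with the \emph{$\alpha$-rescaled} weight $\overline{\mathbf{w}}$. One must check that every exchange $b_+ = \alpha(t)\overline{w}_+$, every factor $\alpha(t)^{3/2}$ from the $L^2_{\mathbf{b}}$-norm rescaling and $\alpha(t)^{5/2}$ from the $\dot H^1_{\mathbf{b}}$-norm rescaling, and the requirement $b_\pm\leq\sqrt{c_*}/3$ (which, via $\overline{w}_\pm\leq(\sqrt{c_0}/3)/\alpha(t)$ and $c_0=c_*$ at $t=0$, is not automatic and needs $c_0$ vs $c_*$ reconciled — here one uses that at the relevant base time the construction fixes $c_0$, and the admissible-weight hypothesis is stated with $\sqrt{c_0}$) stays uniformly bounded above and below by constants depending only on $\alpha_{\min},\alpha_{\max},w_{\min},w$. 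Since \hyperlink{hyp:alphabound}{S2} and \hyperlink{hyp:wmin}{S3} pin all these down, the powers of $\alpha$ never blow up, and the resulting constant $C_7$ is legitimately independent of $\epsilon,\gamma,t_\diamond,s$ and of the particular admissible weight $\overline{\mathbf{w}}$.
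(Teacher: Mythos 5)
Your proposal is correct and follows essentially the same route as the paper's proof: a term-by-term estimate of the five pieces of $Y$ in $L^2_{\overline{\mathbf{w}}}$, with the modulation terms controlled by applying \Cref{lem:estimates} at the rescaled weight $\alpha(t_\diamond+s)\overline{\mathbf{w}}$ and translating back to $\overline{v}$ via \Cref{lem:scaling}, the linearization difference bounded through the Lipschitz dependence of $\phi_c$ on $c$, and the nonlinearity handled by $H^1\hookrightarrow L^\infty$. Your side remark about reconciling the admissible ranges $b_\pm\leq\sqrt{c_0}/3$ versus $\sqrt{c_*}/3$ flags a detail the paper's own proof passes over silently, and your resolution of it is fine.
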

\begin{proof}
We write $\overline{v}=\overline{v}(t_\diamond+s)$ and estimate the various components in \eqref{eqn:yts}. First, we have
\begin{align*}\|(\mathcal{L}_{c(t_\diamond+s)}\!-\!\mathcal{L}_{c(t_\diamond)})\overline{v}\|_{L^2_{\overline{\mathbf{w}}}}\stackrel{\eqref{eqn:linearop}}{=}&\big\|(c(t_\diamond+s)\!-\!c(t_\diamond))\partial_x \overline{v}-2\partial_x((\phi_{c(t_\diamond+s)}-\phi_{c(t_\diamond)})\overline{v})\big\|_{L^2_{\overline{\mathbf{w}}}}\\
\leq & |c(t_\diamond+s)-c(t_\diamond)|\|\partial_x \overline{v}\|_{L^2_{\overline{\mathbf{w}}}}\\
&+2\|\partial_x(\phi_{c(t_\diamond+s)}-\phi_{c(t_\diamond)})\|_{L^\infty}\|\overline{v}\|_{L^2_{\overline{\mathbf{w}}}}\\
&+2\|\phi_{c(t_\diamond+s)}-\phi_{c(t_\diamond)}\|_{L^\infty}\|\partial_x\overline{v}\|_{L^2_{\overline{\mathbf{w}}}}.\end{align*}
Hence, we see that
\[\|(\mathcal{L}_{c(t_\diamond+s)}-\mathcal{L}_{c(t_\diamond)})\overline{v}\|_{L^2_{\overline{\mathbf{w}}}}\leq \tilde{C}_1 |c(t_\diamond+s)-c(t_\diamond)|\|\overline{v}\|_{H^1_{\overline{\mathbf{w}}}}\]
for some constant $\tilde{C}_1>0$, since $c\mapsto \phi_c+\partial_x \phi_c$ is Lipschitz from $\R$ to $L^\infty$.
Next, we apply \eqref{eqn:modulationestimate} at $t_\diamond+s$ with weight $\alpha(t_\diamond+s)\overline{\mathbf{w}}$ to obtain
\begin{align*}
   |\Omega_t(t_\diamond+s)| \leq & C_6\epsilon|f(\gamma(t_\diamond+s))|\big(1+\alpha(t_\diamond+s)^{-1/2}\overline{w}_+^{-1/2}\|v(t_\diamond+s)\|_{L^2_{\alpha(t_\diamond+s)\overline{\mathbf{w}}}}\big)\\
   &+C_6\|v(t_\diamond+s)\|^2_{L^2_{\alpha(t_\diamond+s)\overline{\mathbf{w}}}}.
\end{align*}
Substituting $v(t_\diamond,x)=\alpha^2(t_\diamond)\overline{v}(t_\diamond,\alpha(t_\diamond)x)$, we then find that
\begin{align*}
    &\|{\Omega_t(t_\diamond+s)}\partial_x (\overline{v}(t_\diamond+s)+\phi_{c(t_\diamond+s)})\|_{L^2_{\overline{\mathbf{w}}}}\\
    &\leq \tilde{C}_2\Big(\epsilon|f(\gamma(t_\diamond+s))|(1+\|\overline{v}(t_\diamond+s)\|_{L^2_{\overline{\mathbf{w}}}})+\|\overline{v}(t_\diamond+s)\|^2_{L^2_{\overline{\mathbf{w}}}}\Big)(1+\|\partial_x \overline{v}(t_\diamond+s)\|_{L^2_{\overline{\mathbf{w}}}})
\end{align*}
for some constant $\tilde{C}_2>0$.
Clearly, the term $\|{c_t(t_\diamond+s)}\partial_c\phi_{c(t_\diamond+s)}\|_{L^2_{\overline{\mathbf{w}}}}$ satisfies the same bound upon increasing $\tilde{C}_2>0$ if necessary. Lastly, we estimate
\begin{align}\label{eqn:nonlinear}
    \|N(\overline{v})\|_{L^2_{\overline{\mathbf{w}}}}=2\|\overline{v}\partial_x\overline{v}\|_{L^2_{\overline{\mathbf{w}}}}\leq 2\sqrt{2}\|\overline{v}\|_{H^1}\|\partial_x\overline{v}\|_{L^2_{\overline{\mathbf{w}}}},
\end{align}
using the continuous embedding $H^1(\R)\hookrightarrow L^\infty(\R)$.
\end{proof}

With this, we are equipped to prove \Cref{prop:short}. Another slight complication compared to \cite{pegoweinstein} is that $\overline{v}_t$ is not completely in the stable subspace characterized by \eqref{eqn:orthogonality}. This is a result of the fact that our construction of the modulation parameters ensures that $v_t$ satisfies the orthogonality conditions (see \eqref{eqn:vtorthogonal}), whereas $\overline{v}_t$ does not. We therefore decompose $\overline{v}_t$ using $I=P_c+Q_c$, where we recall that $P_c$ is the spectral projection corresponding to the 0-eigenvalue of $\mathcal{L}_c$ as defined in \eqref{eqn:spectral}. %See \Cref{app:old} for details.

\begin{proof}[Proof of \Cref{prop:short}]
The unscaled perturbation satisfies
\begin{align}\label{eqn:mild}
    \overline{v}(t_\diamond+r)=&e^{r\mathcal{L}_{c(t_\diamond)}}\overline{v}(t_\diamond)+\int_0^{r} e^{(r-s)\mathcal{L}_{c(t_\diamond)}}\big(P_{c(t_\diamond)}+Q_{c(t_\diamond)}\big)Y(t_\diamond,s,\overline{v};c,\Omega)\ \d s
\end{align}
for $r\in[0,\delta]$ and $t_\diamond\in [0,T-\delta]$, which is the mild form of \eqref{eqn:strong}. Since the orthogonality condition \eqref{eqn:orthogonality} ensures $P_{c(t_\diamond)}\overline{v}(t_\diamond)=0$, we have
\begin{align*}
    \|e^{r\mathcal{L}_{c(t_\diamond)}}\overline{v}(t_\diamond)\|_{H^1_{\overline{\mathbf{w}}}} \stackrel{\eqref{eqn:stabilityandsmoothing}}{\leq} M e^{-\beta r} \|\overline{v}(t_\diamond)\|_{H^1_{\overline{\mathbf{w}}}},
\end{align*}
for $\beta=\tfrac{4}{9}c_0\alpha_{\mathrm{max}}^{-2}w_{\mathrm{min}}$, where we use that
\[\tfrac{8}{9}c_0\alpha_{\mathrm{max}}^{-2}w_{\mathrm{min}}\leq\min\{\overline{w}_-(c(t_\diamond)-\overline{w}_-^2),\overline{w}_+(c(t_\diamond)-\overline{w}_+^2)\}.\]
To account for the projection onto the stable subspace, we use the stability and smoothing properties of $e^{t\mathcal{L}_{c(t_\diamond)}}Q_{c(t_\diamond)}$ to obtain
\begin{align*}
    &\left\|\int_0^{r} e^{(r-s)\mathcal{L}_{c(t_\diamond)}}Q_{c(t_\diamond)}Y(t_\diamond,s,\overline{v};c,\Omega)\ \d s\right\|_{H^1_{\overline{\mathbf{w}}}} \\
&\stackrel{\eqref{eqn:stabilityandsmoothing}}{\leq} M \int_0^\delta e^{-\beta(\delta-s)}(\delta-s)^{-1/2} \|Y(t_\diamond,s,\overline{v};c,\Omega)\|_{L^2_{\overline{\mathbf{w}}}} \d s.
\end{align*}
Writing
\[\epsilon_1=\sup_{s\in[0,\delta]}\Big(\|\overline{v}(t_\diamond+s)\|_{H^1_{\overline{\mathbf{w}}}}+
    \|\overline{v}(t_\diamond+s)\|_{H^1}+
    |c(t_\diamond+s)-c(t_\diamond)|\Big),\]
\Cref{cor:yts} can be used to derive that
\begin{align}\sup_{s\in[0,\delta]}\|Y(t_\diamond,s,\overline{v};c,\Omega)\|_{L^2_{\overline{\mathbf{w}}}}\leq &C_7\big(\epsilon_1+\epsilon \|f\|_\infty (1+\epsilon_1)+\epsilon_1^2\big)\sup_{s\in[0,\delta]}\|\overline{v}(t_\diamond+s)\|_{H^1_{\overline{\mathbf{w}}}}\nonumber\\
&+C_6\epsilon \sup_{s\in[0,\delta]}\big|f\big(\gamma (t_\diamond+s)\big)\big|.\label{eqn:ytsapplied}\end{align}
Furthermore, using $\int_0^\delta (\delta-s)^{-1/2} \d s =2\sqrt{\delta}$, we find that 
\begin{align*}
    \left\|\int_0^{r} e^{(r-s)\mathcal{L}_{c(t_\diamond)}}Q_{c(t_\diamond)}Y(t_\diamond,s,\overline{v};c,\Omega)\ \d s\right\|_{H^1_{\overline{\mathbf{w}}}}\leq &  \tilde{C}_1(\epsilon+ \epsilon_1)\sqrt{\delta} \sup_{s\in[0,\delta]}\|\overline{v}(t_\diamond+s)\|_{H^1_{\overline{\mathbf{w}}}}\\
     &+\tilde{C}_1\epsilon\sqrt{\delta}\sup_{s\in[0,\delta]}\big|f\big(\gamma (t_\diamond+s)\big)\big|
\end{align*}
for some constant $\tilde{C}_1>0$. To estimate the component in \eqref{eqn:mild} with the projection $P_{c(t_\diamond)}$, we recall that
\begin{align*}P_{c(t_\diamond)} Y(t_\diamond,s,\overline{v};c,\Omega)=&\big\langle Y(t_\diamond,s,\overline{v};c,\Omega), \eta_{c(t_\diamond)}^1 \big\rangle \partial_x \phi_{c(t_\diamond)} \\
&+\big\langle Y(t_\diamond,s,\overline{v};c,\Omega), \eta_{c(t)}^2\big\rangle \partial_{c} \phi_{c(t_\diamond)}\end{align*}
where $\mathcal{L}_{c(t_\diamond)} \partial_x\phi_{c(t_\diamond)}=0$ and $\mathcal{L}_{c(t_\diamond)} \partial_c \phi_{c(t_\diamond)}=\partial_x\phi_{c(t_\diamond)}$. Thus,
\begin{align*}
    e^{(r-s)\mathcal{L}_{c(t_\diamond)}}P_{c(t_\diamond)} Y(t_\diamond,s,\overline{v};c,\Omega)=&\big\langle Y(t_\diamond,s,\overline{v};c,\Omega), \eta_{c(t_\diamond)}^1 \big\rangle \partial_x \phi_{c(t_\diamond)} \\
    &+\big\langle Y(t_\diamond,s,\overline{v};c,\Omega), \eta_{c(t_\diamond)}^2\big\rangle \partial_c \phi_{c(t_\diamond)}\\
    &+ \big\langle Y(t_\diamond,s,\overline{v};c,\Omega), \eta_{c(t_\diamond)}^2\big\rangle(\delta-s)\partial_x \phi_{c(t_\diamond)},
\end{align*}
and we may estimate
\begin{align*}
    &\|e^{(r-s)\mathcal{L}_{c(t_\diamond)}}P_{c(t_\diamond)} Y(t_\diamond,s,\overline{v};c,\Omega)\|_{H^1_{\overline{\mathbf{w}}}}\\
    &\leq \| Y(t_\diamond,s,\overline{v};c,\Omega)\|_{L^2_{\overline{\mathbf{w}}}}\| \eta_{c(t_\diamond)}^1 \|_{L^2_{-{\overline{\mathbf{w}}}}} \|\partial_x \phi_{c(t_\diamond)} \|_{H^1_{\overline{\mathbf{w}}}}\\
    &\quad+\| Y(t_\diamond,s,\overline{v};c,\Omega)\|_{L^2_{\overline{\mathbf{w}}}}\|\eta_{c(t_\diamond)}^2\|_{L^2_{-{\overline{\mathbf{w}}}}} \|\partial_c \phi_{c(t_\diamond)}\|_{H^1_{\overline{\mathbf{w}}}}\\
    &\quad+ \| Y(t_\diamond,s,\overline{v};c,\Omega)\|_{L^2_{\overline{\mathbf{w}}}}\|\eta_{c(t_\diamond)}^2\|_{L^2_{-{\overline{\mathbf{w}}}}} (r-s)\|\partial_x \phi_{c(t_\diamond)}\|_{H^1_{\overline{\mathbf{w}}}}\\
    &\leq  \tilde{C}_2\| Y(t_\diamond,s,\overline{v};c,\Omega)\|_{L^2_{\overline{\mathbf{w}}}}(1+\delta-s),
\end{align*}
for some constant $\tilde{C}_2>0$. Integrating this inequality, it follows via \eqref{eqn:ytsapplied} that there exists a constant $\tilde{C}_3>0$ for which
\begin{align*}
     &\left\|\int_0^{r} e^{(r-s)\mathcal{L}_{c(t_\diamond)}}P_{c(t_\diamond)}Y(t_\diamond,s,\overline{v};c,\Omega)\ \d s\right\|_{H^1_{\overline{\mathbf{w}}}} \\ &\leq  \tilde{C}_3(\epsilon +\epsilon_1)(\delta+\delta^2)\sup_{s\in[0,\delta]}\|\overline{v}(t_\diamond+s)\|_{H^1_{\overline{\mathbf{w}}}}
    +\tilde{C}_3\epsilon(\delta+\delta^2) \sup_{s\in[0,\delta]}\big|f\big(\gamma (t_\diamond+s)\big)\big|.
\end{align*}
We collect that
\begin{align*}
    \|\overline{v}(t_\diamond+s)\|_{H^1_{\overline{\mathbf{w}}}}\leq & M e^{-\beta s}\|\overline{v}(t_\diamond)\|_{H^1_{\overline{\mathbf{w}}}}+\epsilon\big(\tilde{C}_1\sqrt{\delta}+\tilde{C}_3\delta+\tilde{C}_3\delta^2\big) \sup_{s\in[0,\delta]}\big|f\big(\gamma (t_\diamond+s)\big)\big|\\
    &+  (\epsilon+\epsilon_1)\big(\tilde{C}_1\sqrt{\delta}+\tilde{C}_3\delta+\tilde{C}_3\delta^2\big)\sup_{s\in[0,\delta]}\|\overline{v}(t_\diamond+s)\|_{H^1_{\overline{\mathbf{w}}}}
\end{align*}
for $s\in [0,\delta]$. The result follows by taking a supremum over $s\in [0,\delta]$, and choosing $\delta_4$ small enough such that $\sup_{s\in[0,\delta]}\|\overline{v}(t_\diamond+s)\|_{H^1_{\overline{\mathbf{w}}}}$ can be brought to the left.
\end{proof}
We conclude this section with the proof of \Cref{cor:short}. Essentially, we apply \Cref{lem:scaling} to translate \Cref{prop:short} to the rescaled frame. Recall that \hyperlink{hyp:timevarying}{C2} implies that $t\to \tfrac{w_+(t)}{{\alpha}(t)}$ is decreasing and $t\to \tfrac{w_-(t)}{{\alpha}(t)}$ is increasing on $[0,T]$. This guarantees that the weight-functions are also monotone in the original frame.

\begin{proof}[Proof of \Cref{cor:short}]
Setting $\overline{\mathbf{b}}=\frac{\mathbf{w}(t+\delta)}{{\alpha}(t+\delta)}$, we have
\begin{align*}\sup_{s\in [0,\delta]}\|\overline{v}(t+s)\|_{H^1_{\overline{\mathbf{b}}}}\leq& \sup_{s\in [0,\delta]}\|\overline{v}(t+s)\|_{H^1_{\mathbf{w}(t+s)/\alpha(t+s)}}
\leq \tilde{C}_1\sup_{s\in [0,\delta]}\|v(t+s)\|_{H^1_{\mathbf{w}(t+s)}}
,
\end{align*}
for some constant $\tilde{C}_1>0$, where we have used \Cref{lem:scaling} in the last step. If $\delta_5$ is small enough, then we can apply \Cref{prop:short} to obtain
\[\sup_{s\in[0,\delta]}\|\overline{v}(t+s)\|_{H^1_{\overline{\mathbf{b}}}}\leq C_4\Big( \|\overline{v}(t)\|_{H^1_{\overline{\mathbf{b}}}}+\epsilon \sup_{s\in[0,\delta]}\big|f\big(\gamma (t+s)\big)\big| (\sqrt{\delta}+\delta^2)\Big).\]
Choosing $C_5$ large enough, we have 
\begin{align}\sup_{s\in [0,\delta]}\|{v}(t\!+\!s)\|_{H^1_{{\alpha}(t+s) \overline{\mathbf{b}}}}\leq C_5\Big(\|{v}(t)\|_{H^1_{{\alpha}(t)\overline{\mathbf{b}}}}\!+\!\epsilon \sup_{s\in[0,\delta]}\big|f\big(\gamma (t\!+\!s)\big)\big|(\sqrt{\delta}\!+\!\delta^2)\Big)\label{eqn:equivalent}\end{align}
via \Cref{lem:scaling}. It follows that
\begin{align*}
    \|{v}(t+\delta)\|_{H^1_{\mathbf{w}(t+\delta)}}=&\|{v}(t+\delta)\|_{H^1_{{\alpha}(t+\delta) \overline{\mathbf{b}}}}\\
    \stackrel{\eqref{eqn:equivalent}}{\leq}& C_5\Big(\|{v}(t)\|_{H^1_{{\alpha}(t)\overline{\mathbf{b}}}}+\epsilon \sup_{s\in[0,\delta]}\big|f\big(\gamma (t+s)\big)\big|(\sqrt{\delta}+\delta^2)\Big)
\end{align*}
and the conclusion follows via \eqref{eqn:weightcondition} and \Cref{lem:observations}.
\end{proof}

\section{Long-time control}
\label{sec:longtime}
In this section, we establish control of the perturbation $v$ over long time 
intervals under the assumption that the forcing term is exponentially bounded. As a preparation, we fix the minimum weight $w_{\mathrm{min}}$ in \hyperlink{hyp:wmin}{S3}
and an intermediate weight $w_\infty\in(w_{\mathrm{min}},w)$ by writing\footnote{In principle, the constant $E_{\mathrm{max}}$ in \eqref{eqn:wmin} can be replaced by $E=\epsilon/\gamma$. For clarity, however, we work with $E_{\mathrm{max}}$ so that the weights do not depend on $\epsilon$ and $\gamma$.}
\begin{align}w_{\mathrm{min}}=we^{-4 C_6 E_{\mathrm{max}}(2+\delta_6) e^{1/2}}\quad \text{and} \quad w_\infty=we^{-2 C_6 E_{\mathrm{max}}(2+\delta_6) e^{1/2}},\label{eqn:wmin}\end{align}
using the constants $C_6$ and $\delta_6$ from \Cref{lem:estimates}. 
In particular, this choice only depends on \hyperlink{hyp:initial}{S1} and \hyperlink{hyp:alphabound}{S2}. In addition, we introduce
functions $W_-:\R^+\to [w_{\mathrm{min}},w_\infty)$ and $W_+:\R^+\to (w_\infty, w]$ through 
\begin{align}W_-(t)=w_{\mathrm{min}}(\tfrac{w_\infty}{w_{\mathrm{min}}})^{1-e^{-  t}}, \quad W_+(t)=w(\tfrac{w_\infty}{w})^{1-e^{- t}}\label{eqn:weightdef};\end{align}
see \Cref{fig:weightgraph}. 
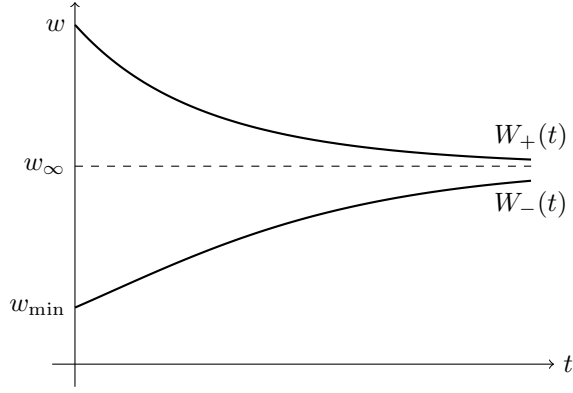
\begin{figure}[t]
    \centering
\begin{tikzpicture}[scale=1.5]
    % Constants
    \def\aZero{0.5}  % Adjust as needed
    \def\aInfinity{1.75}  % Adjust as needed
    \def\aMax{3}  % Adjust as needed
    \def\gammaValue{0.7}  % Adjust as needed
    \def\timeMax{4}  % Adjust as needed

    % Function a(t)
    \draw[thick, domain=0:\timeMax, samples=100] plot (\x, {\aZero*pow(\aInfinity/\aZero,1-exp(-\gammaValue*\x))}) node[below] {$W_-(t)$};

    % Function b(t)
    \draw[thick, domain=0:\timeMax, samples=100] plot (\x, {\aMax*pow(\aInfinity/\aMax,1-exp(-\gammaValue*\x))}) node[above] {$W_+(t)$};
    
    % Axes
    \draw[->] (-0.2,0) -- (\timeMax+0.2,0) node[right] {$t$};
    \draw[->] (0,-0.2) -- (0,\aMax+0.2) ;

    % % Time labels
    % \foreach \i in {1,...,\timeMax}
    %     \draw (\i,-0.1) -- (\i,0.1) node[above] {\i};

    % Value labels
        \draw (0,\aZero) node[left] {$w_{\mathrm{min}}$};
        \draw (0,\aMax) node[left] {$w$};
        \draw [dashed] (0,\aInfinity) -- (4,\aInfinity);
        \draw (0,\aInfinity) node[left] {$w_\infty$};
        % \draw[<->] (-0.1,\aMax-0.1) -- (-0.1,\aInfinity+0.1);
        % \draw[<->] (-0.1,\aInfinity-0.1) -- (-0.1,\aZero+0.1);
        % \draw (-0.1,0.5*\aMax+0.5*\aInfinity) node[left] {$O(\epsilon/\gamma+\epsilon_1^2/\gamma)$};
        % \draw (-0.1,0.5*\aInfinity+0.5*\aZero) node[left] {$O(\epsilon/\gamma+\epsilon_1^2/\gamma)$};
\end{tikzpicture}
    \caption{Graph of the weight-functions $W_-$ and $W_+$
    defined in \eqref{eqn:weightdef}. We remark that $w_{\mathrm{min}}$ and $w_\infty$ defined in \eqref{eqn:wmin} approach $w$ upon letting $E_{\mathrm{max}}\downarrow 0$.}
    \label{fig:weightgraph}
\end{figure}
We then claim that the weight-function $\mathbf{w}(t) = \big(w_-(t), w_+(t) \big)$ defined by
\begin{equation}
    \label{eq:defn:w:pm}
    w_-(t)=W_-(\gamma t), \quad w_+(t)=W_+(\gamma t)
\end{equation}
%$\mathbf{w}(t)=\big(W_-(\gamma t),W_+(\gamma t)\big)$
satisfies \hyperlink{hyp:timevarying}{C2}, provided that the following conditions are met.

%At this point, we construct time-dependent weights to match \hyperlink{hyp:timevarying}{C2}, where we fix the constant
%\[w_{\mathrm{min}}=ae^{-4 C_6 (E_{\mathrm{max}}(1+\delta_6)+1 )e^{1/2}},\]
%depending only on \hyperlink{hyp:initial}{S1} and \hyperlink{hyp:alphabound}{S2}. Here, $C_6$ and $\delta_6$ are the constants from \Cref{lem:estimates}. Throughout this section, we assume the following setting.
\begin{itemize}
\item[\textbf{C3}]{
  \phantomsection\hypertarget{hyp:epsilon}{}\textit{
  We have $p\in[0,\frac{1}{4})$ together with $\epsilon \in (0,1]$ and $\gamma > 0$. In addition, we have $\delta=\epsilon^{-p}$ together with the
inequalities
%Given $\epsilon, \gamma>0$ and $p\in[0,\frac{1}{4})$, we set $\delta=\epsilon^{-p}$. We assume that 
\begin{itemize}
\item[$\bullet$] $\delta\leq \gamma^{-1}$; 
\item[$\bullet$] $\gamma\leq \tfrac{1}{3}\alpha^{-3}_{\mathrm{max}}w_{\mathrm{min}}(c_0-w_{\mathrm{min}}^2)$;
\item[$\bullet$]  $\epsilon/\gamma\leq E_{\mathrm{max}}$.
\end{itemize}}}
\end{itemize}
% \begin{condition}\label{hyp:epsilon}
% We have $p\in[0,\frac{1}{4})$ together with $\epsilon > 0$ and $\gamma > 0$. In addition, we have $\delta=\epsilon^{-p}$ together with the
% inequalities
% %Given $\epsilon, \gamma>0$ and $p\in[0,\frac{1}{4})$, we set $\delta=\epsilon^{-p}$. We assume that 
% \begin{itemize}
% \item $\delta\leq \gamma^{-1}$; 
% \item $\gamma\leq \tfrac{1}{3}\alpha^{-3}_{\mathrm{max}}w_{\mathrm{min}}(c_0-w_{\mathrm{min}}^2)$;
% \item  $\epsilon/\gamma\leq E_{\mathrm{max}}$.
% \end{itemize}
% \end{condition}
\begin{itemize}
\item[\textbf{C4}]{
  \phantomsection\hypertarget{hyp:exponential}{}\textit{The forcing term $f$ is continuous and bounded as 
 \[|f(t)| \leq   e^{- t}, \quad t\geq 0.\]
 }}
\end{itemize}
% \begin{condition}\label{hyp:exponential}
%    The forcing term $f$ is measureable and bounded as 
% \[|f(t)| \leq   e^{- t}, \quad t\in[0,T].\]
% \end{condition}
\begin{itemize}
\item[\textbf{C5}]{
  \phantomsection\hypertarget{hyp:Tproperties2}{}\textit{Given $T>0$, the function ${v}\in  C([0, T],H^2) \cap C^1([0, T],H^{-1})$ satisfies \eqref{eqn:vmod}, while $\alpha \in C^1([0,T],\R)$ solves \eqref{eqn:modulation}. In addition, we have the inclusion
    \[\alpha(t)\in [\alpha_{\mathrm{min}},\alpha_{\mathrm{max}}], \quad t\in[0,T].\]
 }}
\end{itemize}
% \begin{condition}\label{hyp:Tproperties2}
% Given $T>0$, let ${v}\in  C([0, T],H^2) \cap C^1([0, T],H^{-1})$ solve \eqref{eqn:vmod} and $\alpha \in C^1([0,T],\R)$ solve \eqref{eqn:modulation}. We assume that
%     \[\alpha(t)\in [\alpha_{\mathrm{min}},\alpha_{\mathrm{max}}], \quad t\in[0,T].\]
% \end{condition}

%With these assumptions in place, we construct weight-functions $\mathbf{w}(t)=(w_-(t),w_+(t))$ as follows.

\begin{lemma}[See \Cref{app:weight}]\label{lem:weightcondition}
Assume \hyperlink{hyp:initial}{S1}, \hyperlink{hyp:alphabound}{S2}, and the choice \eqref{eqn:wmin} for \hyperlink{hyp:wmin}{S3}.
If $\hyperlink{hyp:epsilon}{C3}$ and \hyperlink{hyp:exponential}{C4} are satisfied, and \hyperlink{hyp:Tproperties2}{C5} holds for some $T>0$,
%, and $\epsilon, \gamma>0$ and $p\in[0,\frac{1}{4})$ satisfy $\hyperlink{hyp:epsilon}{C3}$, 
then the bound
\begin{align}e^{ \gamma t}\|{v}
(t)\|_{L^2_{w_\infty}}\leq \min\{\delta_6 w_\infty^{1/2},(E_{\mathrm{max}}\gamma)^{1/2}\}, \quad t \in [0,T],\label{eqn:assumption}\end{align}
implies that the functions $w_-$ and $w_+$ defined in \eqref{eq:defn:w:pm}
%\[w_-(t)=W_-(\gamma t), \quad w_+(t)=W_+(\gamma t)\]
%with $W_-, W_+$ given by \eqref{eqn:weightdef} 
satisfy \hyperlink{hyp:timevarying}{C2}.
\end{lemma}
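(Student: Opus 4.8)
\noindent
\textbf{Proof strategy for \Cref{lem:weightcondition}.}
The plan is to verify the three requirements in \hyperlink{hyp:timevarying}{C2} in turn, the monotonicity and range being immediate from the explicit formulas and the estimate \eqref{eqn:weightcondition} reducing to a single scalar bound for $\log\alpha$. Since $w_\infty/w_{\mathrm{min}}>1$ and $t\mapsto 1-e^{-\gamma t}$ is increasing, the formula $w_-(t)=w_{\mathrm{min}}(w_\infty/w_{\mathrm{min}})^{1-e^{-\gamma t}}$ coming from \eqref{eqn:weightdef} and \eqref{eq:defn:w:pm} shows that $w_-$ is increasing with range $[w_{\mathrm{min}},w_\infty)$; symmetrically, $w_\infty/w<1$ makes $w_+(t)=w(w_\infty/w)^{1-e^{-\gamma t}}$ decreasing with range $(w_\infty,w]$. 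In particular $w_{\mathrm{min}}\le w_\pm(t)\le w$ on $[0,T]$. For the inequality \eqref{eqn:weightcondition} I would pass to logarithms. Put $L=\log(w/w_\infty)$, which by the choice \eqref{eqn:wmin} equals $\log(w_\infty/w_{\mathrm{min}})=2C_6E_{\mathrm{max}}(2+\delta_6)e^{1/2}$, and write $\Delta=\Delta(t,s)=e^{-\gamma(t+s/2)}-e^{-\gamma(t+s)}>0$. Substituting into the formulas for $w_\pm$ gives
\[\log\frac{w_-(t+s/2)}{w_-(t+s)}=-L\Delta,\qquad \log\frac{w_+(t+s/2)}{w_+(t+s)}=+L\Delta,\]
so that \eqref{eqn:weightcondition} is equivalent to the two-sided estimate $\bigl|\log\alpha(t)-\log\alpha(t+s)\bigr|\le L\Delta$ for all $s\in[0,\delta]$ and $t\in[0,T-s]$.

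To establish this, I would write $\log\alpha(t)-\log\alpha(t+s)=-\int_t^{t+s}\tfrac{\alpha_\tau}{\alpha}\,\d\tau$ and bound the integrand pointwise. The assumption \eqref{eqn:assumption} gives $\|v(\tau)\|_{L^2_{w_\infty}}\le\delta_6w_\infty^{1/2}$, and since $w_\infty<w<\sqrt{c_*}/3$ we may apply \Cref{lem:estimates} with the symmetric weight $\mathbf{b}=(w_\infty,w_\infty)$, for which $\|\cdot\|_{L^2_{\mathbf{b}}}=\|\cdot\|_{L^2_{w_\infty}}$ by \eqref{eqn:norm}. Combining \eqref{eqn:modulationestimproved} with $\alpha(\tau)\ge\alpha_{\mathrm{min}}$ from \hyperlink{hyp:Tproperties2}{C5}, this yields
\[\Bigl|\frac{\alpha_\tau}{\alpha}\Bigr|\le \tfrac23\epsilon|f(\gamma\tau)|+\frac{C_6}{\alpha_{\mathrm{min}}}\Bigl(\epsilon|f(\gamma\tau)|w_\infty^{-1/2}\|v(\tau)\|_{L^2_{w_\infty}}+\|v(\tau)\|^2_{L^2_{w_\infty}}\Bigr).\]
I would then feed in the three decay facts $|f(\gamma\tau)|\le e^{-\gamma\tau}$ (\hyperlink{hyp:exponential}{C4}), $w_\infty^{-1/2}\|v(\tau)\|_{L^2_{w_\infty}}\le\delta_6e^{-\gamma\tau}$ and $\|v(\tau)\|^2_{L^2_{w_\infty}}\le E_{\mathrm{max}}\gamma e^{-2\gamma\tau}$ (the last two from \eqref{eqn:assumption}), together with $\epsilon\le E_{\mathrm{max}}\gamma$ (\hyperlink{hyp:epsilon}{C3}), to reach a bound of the form $|\alpha_\tau/\alpha|\le\kappa E_{\mathrm{max}}\gamma\, e^{-\gamma\tau}$ with $\kappa$ depending only on $C_6,\delta_6,\alpha_{\mathrm{min}}$. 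Integrating over $[t,t+s]$ gives $\bigl|\log\alpha(t)-\log\alpha(t+s)\bigr|\le\kappa E_{\mathrm{max}}\bigl(e^{-\gamma t}-e^{-\gamma(t+s)}\bigr)$.

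The final step is the comparison that motivates the half-step $t+s/2$ appearing in \eqref{eqn:weightcondition}. Setting $y=e^{-\gamma s/2}\in(0,1]$, one computes
\[\frac{e^{-\gamma t}-e^{-\gamma(t+s)}}{\Delta(t,s)}=\frac{1-y^2}{y(1-y)}=1+\frac1y=1+e^{\gamma s/2},\]
and $\gamma s\le\gamma\delta\le1$ (from \hyperlink{hyp:epsilon}{C3}) bounds this ratio by $1+e^{1/2}\le 2e^{1/2}$. Hence $\bigl|\log\alpha(t)-\log\alpha(t+s)\bigr|\le 2e^{1/2}\kappa E_{\mathrm{max}}\,\Delta(t,s)$, and the numerical prefactors in the definition \eqref{eqn:wmin} of $L=2C_6E_{\mathrm{max}}(2+\delta_6)e^{1/2}$ are arranged precisely so that $2e^{1/2}\kappa E_{\mathrm{max}}\le L$, which yields the required estimate and hence \hyperlink{hyp:timevarying}{C2}.

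I expect the main obstacle to be bookkeeping rather than conceptual: one must check that \Cref{lem:estimates} genuinely applies at every $\tau\in[0,T]$ (the smallness of $\|v(\tau)\|_{L^2_{w_\infty}}$ and the admissibility of the weight, both drawn from \eqref{eqn:assumption}), and then track constants carefully enough that the leading forcing term ($\sim e^{-\gamma\tau}$) and the two modulation error terms ($\sim e^{-2\gamma\tau}$) are all absorbed into $\kappa E_{\mathrm{max}}\gamma\, e^{-\gamma\tau}$ with $\kappa$ small enough to be covered by the constants chosen in \eqref{eqn:wmin}. The one genuinely non-mechanical ingredient is the identity $\bigl(e^{-\gamma t}-e^{-\gamma(t+s)}\bigr)/\Delta(t,s)=1+e^{\gamma s/2}$, which is what forces both the half-step structure of \eqref{eqn:weightcondition} and the appearance of the factor $e^{1/2}$ in \eqref{eqn:wmin}.
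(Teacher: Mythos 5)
Your proposal follows the paper's proof in \Cref{app:weight} essentially verbatim: it reduces \hyperlink{hyp:timevarying}{C2} to the single estimate $|\log\alpha(t)-\log\alpha(t+s)|\le \log(w/w_\infty)\,\bigl(e^{-\gamma(t+s/2)}-e^{-\gamma(t+s)}\bigr)$, integrates the modulation bound of \Cref{lem:estimates} using \hyperlink{hyp:exponential}{C4} and \eqref{eqn:assumption} together with $\epsilon/\gamma\le E_{\mathrm{max}}$, and absorbs the ratio $(e^{-\gamma t}-e^{-\gamma(t+s)})/(e^{-\gamma(t+s/2)}-e^{-\gamma(t+s)})=1+e^{\gamma s/2}\le 2e^{1/2}$ (valid since $\gamma\delta\le 1$) into the factor $e^{1/2}$ built into \eqref{eqn:wmin}. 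The only deviation is bookkeeping: by splitting off the leading term $\tfrac23\epsilon f$ via \eqref{eqn:modulationestimproved} and inserting the factor $\alpha_{\mathrm{min}}^{-1}$, your constant $\kappa=\tfrac23+\alpha_{\mathrm{min}}^{-1}C_6(1+\delta_6)$ need not satisfy $\kappa\le C_6(2+\delta_6)$ as your last step asserts, whereas the paper works directly with the combined bound \eqref{eqn:modulationestimate}, whose coefficients $(1+\delta_6)+1=2+\delta_6$ match the definition \eqref{eqn:wmin} exactly.
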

%We summarize the construction of the weight-functions $w_-$ and $w_+$ in \Cref{fig:weightgraph}.

Our main result here provides exponential control for
the perturbation $v$ measured with respect to the time-varying 
spatial weight-functions $\mathbf{w}(t)$ defined in \eqref{eq:defn:w:pm}. It requires a priori
control\footnote{This control is established in the proof of \Cref{thm:bigthm}.} over the terms in the expression
%Let us introduce for each $\epsilon,\gamma,t\geq 0$ the constant
\begin{align}K_{\epsilon,\gamma}(t)\!=\!\epsilon\!+\!\frac{\epsilon^{1+p}}{\gamma }\!+\!\sup_{s\in[0,t]} \Big(e^{ \gamma s}\|{v}
(s)\|_{H^1_{\mathbf{w}(s)}}\!+\!\|{v}
(s)\|_{H^1}\!+\!\gamma^{-1}e^{2 \gamma s}\|{v}
(s)\|^2_{H^1_{\mathbf{w}(s)}}\Big),\label{eqn:kdef}\end{align}
which we have introduced for notational convenience. We note in particular that condition \eqref{eqn:assumption} is satisfied if $K_{\epsilon,\gamma}(t)$ is sufficiently small.
%Our result for control of the perturbation measured in time-varying weights then is the following.

\begin{proposition}[Long-time control]\label{prop:duhamel}
%Assuming \hyperlink{hyp:initial}{S1} - \hyperlink{hyp:wmin}{S3},
Assuming \hyperlink{hyp:initial}{S1}, \hyperlink{hyp:alphabound}{S2}, and the choice \eqref{eqn:wmin} for \hyperlink{hyp:wmin}{S3},
there exist constants $\delta_{8}, C_8>0$ so that the following holds true. If $\hyperlink{hyp:epsilon}{C3}$ and \hyperlink{hyp:exponential}{C4} are satisfied, and \hyperlink{hyp:Tproperties2}{C5} holds for some $T>0$,
%, and $\epsilon\in (0,1], \gamma>0$ and $p\in[0,\frac{1}{4})$ satisfy $\hyperlink{hyp:epsilon}{C3}$, 
then the bound
\begin{align}K_{\epsilon,\gamma}(T)\leq \delta_{8}\label{eqn:assdelta6}\end{align}
% \begin{align}
% e^{b\epsilon t}\|{v}
% (t)\|_{H^1_{\mathbf{w}(t)}}\leq &\epsilon_1\label{eqn:bound}\\
% \|{v}(t)\|_{H^1}\leq &\epsilon_2 \nonumber\end{align} 
implies
\[\sup_{0\leq t \leq T}e^{  \gamma t}\|{v}(t)\|_{H^1_{\mathbf{w}(t)}}\leq C_8\big(\|\overline{v}_*\|_{H^1}+\|\overline{v}_*\|_{H^1_{w}}+\epsilon ^{1-2p}\big).\]
\end{proposition}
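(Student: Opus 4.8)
The plan is to run a Duhamel/fixed-point argument on the rescaled evolution \eqref{eqn:vmod} over the whole interval $[0,T]$, using the exponential decay of $f$ and the smoothing estimate of \Cref{prop:smoothing} to beat the $t^{-1/2}$ singularity. The first step is to write the mild form of \eqref{eqn:vmod} with respect to the fixed generator $\mathcal{L}_{c_0}$, splitting the inhomogeneity $R(t,v;\alpha,\Omega)+\alpha^{-3}N(v)$ via $I=P_{c_0}+Q_{c_0}$. The semigroup on $\mathrm{ran}\, Q_{c_0}$ decays at rate $\beta$, which I would pick so that $\beta>\gamma$ after accounting for the time-rescaling $\alpha^{-3}$ in \eqref{eqn:vmod}; this is exactly what condition \hyperlink{hyp:epsilon}{C3} guarantees via $\gamma\leq\tfrac13\alpha_{\max}^{-3}w_{\min}(c_0-w_{\min}^2)$. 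The $P_{c_0}$-component is finite-dimensional and, as in \cite{pegoweinstein}, produces only bounded (polynomially growing in $s$) factors that are harmless once multiplied against the $e^{-\gamma s}$ decay coming from $f$ and from the bootstrap ansatz.

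The second step is to set up the bootstrap quantity $\Phi(T)=\sup_{0\le t\le T}e^{\gamma t}\|v(t)\|_{H^1_{\mathbf{w}(t)}}$ and to estimate each term of the Duhamel integral against $K_{\epsilon,\gamma}(t)$. The troublesome term is $\tfrac{\alpha_t}{\alpha}x\partial_x v$ in \eqref{eqn:Rdef}: here I would invoke \Cref{lem:estimates} to bound $\alpha_t/\alpha$ by $\epsilon|f(\gamma t)|+\|v\|^2$, and \Cref{lem:observations}(3) to absorb the factor $x$ by passing from the weight $w_+(t)$ to a slightly larger weight — which is available precisely because $w_+$ is \emph{decreasing}, so that at an earlier time the weight was larger. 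The monotonicity of $w_\pm$ after rescaling, guaranteed by \hyperlink{hyp:timevarying}{C2} (verified in \Cref{lem:weightcondition} under the a priori bound \eqref{eqn:assumption}, which follows from \eqref{eqn:assdelta6}), is what makes this weight-shifting consistent across the whole interval. The constant in \Cref{lem:observations}(3) blows up as the weight gap shrinks, but the gap here is of fixed size $\sim\gamma$-independent (dictated by $w_{\min},w_\infty,w$ through \eqref{eqn:wmin}), so it is a harmless constant; the genuinely short-time regime where the gap would be small is not handled here but by \Cref{cor:short}, which I would use to seed the bootstrap on $[0,\delta]$ with $\delta=\epsilon^{-p}$.

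The third step is to collect the estimates into an inequality of the schematic form
\[
\Phi(T)\ \le\ C_8\big(\|\overline v_*\|_{H^1}+\|\overline v_*\|_{H^1_w}+\epsilon^{1-2p}\big)\ +\ C_8\,K_{\epsilon,\gamma}(T)\,\Phi(T),
\]
where the $\epsilon^{1-2p}$ term arises from the linear forcing contribution $\epsilon f(\gamma t)\phi_{c_0}$ integrated against the $t^{-1/2}$ smoothing kernel over a window of length $\delta=\epsilon^{-p}$ (giving $\epsilon\cdot\epsilon^{-2p}\cdot(\text{something})$-type bookkeeping), and the $\|\overline v_*\|$ terms come from the initial data through \Cref{lem:scaling} relating $\|v(0)\|$ to $\|\overline v_*\|$. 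Then choosing $\delta_8$ small enough that $C_8\delta_8\le\tfrac12$ lets me absorb the quadratic $K_{\epsilon,\gamma}(T)\Phi(T)$ term, since $K_{\epsilon,\gamma}(T)\le\delta_8$ controls $\Phi(T)\le K_{\epsilon,\gamma}(T)$ plus the other pieces. I expect the main obstacle to be the careful bookkeeping of how the weight-shift constant, the rate gap $\beta-\gamma$, and the window length $\delta=\epsilon^{-p}$ interact — in particular making sure that every occurrence of the $x\partial_x v$ term is matched by a genuine (uniformly positive) drop in the weight provided by \hyperlink{hyp:timevarying}{C2}, and that the cumulative loss over $[0,T]$ stays bounded by $w-w_\infty$ rather than degrading as $T\to\infty$. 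Once that ledger balances, the estimate closes uniformly in $T$ and the stated bound follows.
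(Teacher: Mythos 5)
Your overall architecture (Duhamel formula for \eqref{eqn:vmod}, weight-shifting via \Cref{lem:observations}, absorption of a $K_{\epsilon,\gamma}(T)\Phi(T)$ term) matches the paper's, but there is a genuine gap in your treatment of the $x\partial_x v$ term, which is the heart of the proof. Your claim that the weight gap in \Cref{lem:observations}(3) ``is of fixed size, $\gamma$-independent'' is false: since $w_\pm(t)\to w_\infty$ monotonically, the gap $w_+(s)-w_+(t)$ vanishes as $s\uparrow t$, so a single global Duhamel estimate in the running weight $\mathbf{w}(t)$ produces a factor behaving like $(t-s)^{-1}$ near the upper limit of integration, which combined with the $(t-s)^{-1/2}$ smoothing singularity is not integrable. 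This failure occurs at the endpoint $s=t$ for \emph{every} $t$, not merely in a ``short-time regime'' near $t=0$, so seeding the bootstrap on $[0,\delta]$ does not repair it. Moreover, even away from the endpoint the absolute size of the gap decays like $e^{-\gamma s}$, so the constant in \Cref{lem:observations}(3) grows like $e^{\gamma s}$; the argument only closes because this is matched against the $e^{-\gamma s}$ decay of $\alpha_t/\alpha$ in \eqref{eqn:modforI1}, leaving the residual factor $(\gamma\delta)^{-1}=\epsilon^{p}/\gamma$ controlled by \Cref{lem:weight} --- which is precisely why the term $\epsilon^{1+p}/\gamma$ appears in $K_{\epsilon,\gamma}$ and in \hyperlink{hyp:epsilon}{C3}. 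None of this rate-matching appears in your ledger, and it is exactly the point where your final ``balancing'' step would break down.

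The paper's resolution is a two-scale structure that your outline does not reproduce. \Cref{lem:duhamel} proves the Duhamel bound with \emph{target} weight $\mathbf{w}(t+\delta/2)$ rather than $\mathbf{w}(t)$, so that the gap $q(t,s,\delta)$ in \eqref{eqn:q} satisfies $e^{-\gamma s}q(t,s,\delta)\lesssim(\gamma\delta)^{-1}$ uniformly in $s\le t$. Then \Cref{cor:short} is applied on \emph{every} window $[t,t+\delta]$ with $t\in[0,T-\delta]$ --- not just to seed the initial interval --- to convert the estimate in the offset weight $\mathbf{w}(t+\delta/2)$ back into a bound on $\sup_{s\in[0,\delta]}\|v(t+s)\|_{H^1_{\mathbf{w}(t+s)}}$; this application requires the a priori control of $\sup_s|c(t+s)-c(t)|\,\delta^2$ supplied by \eqref{eqn:ccontrol} of \Cref{lem:Mestimate}, a step absent from your proposal, and it is also where the $\epsilon\delta^2=\epsilon^{1-2p}$ term actually originates (not from integrating $\epsilon f(\gamma t)\phi_{c_0}$ against the smoothing kernel). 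Only after taking the supremum over the window starting points does one arrive at the self-improving inequality you write schematically, and the absorption then proceeds as you describe.
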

The proof of \Cref{prop:duhamel} is based on the evolution equation \eqref{eqn:vmod}, in which we isolate the precarious term $\frac{{\alpha}_t}{{\alpha}}x\partial_x v$ as 
% \begin{align*}
%     \tilde{v}_t=\mathcal{L}_{c_0}\tilde{v}+N(\tilde{v})+R(\alpha,\tilde{v}).
% \end{align*}
% where 
% \begin{align*}
%     R(\alpha,\tilde{v})=\frac{\alpha_t}{\alpha}(2+x\partial_x)[\phi_{c_0}+\tilde{v}]+\frac{\tilde{\Omega}_t}{\alpha}\partial_x[\phi_{c_0}+\tilde{v}]+\alpha^3\tilde{f}(t)[\phi_{c_0}+\tilde{v}].
% \end{align*}
\begin{align}\label{eqn:strong2}
    {v}_t=\alpha^{-3}\mathcal{L}_{c_0}{v}+\frac{{\alpha}_t}{{\alpha}}x\partial_x{v}+M(t,v;\alpha,\Omega)
\end{align}
with
\begin{align*}
    M(t,v;\alpha,\Omega)\!=\!\alpha^{-3}N({v})\!+\!\frac{{\alpha}_t}{{\alpha}}\big((2\!+\!x\partial_x)\phi_{c_0}\!+\!2{v}\big)\!+\!\frac{{\Omega}_t}{{\alpha}}\partial_x(\phi_{c_0}\!+\!{v})
   \! +\!\epsilon{f}(\gamma t)(\phi_{c_0}+{v}).
\end{align*}
In preparation, we show that $M(t,v;\alpha,\Omega)$ can be controlled in terms of $v$. Additionally, we bound the fluctuations of $c(t)$ over a time-step $\delta$ in terms of $v$ and $\delta$. Recalling that $\delta_6$ is the constant introduced in \Cref{lem:estimates}, we prove the following.

\begin{lemma}\label{lem:Mestimate}
Assuming \hyperlink{hyp:initial}{S1}, \hyperlink{hyp:alphabound}{S2}, 
and the choice \eqref{eqn:wmin} for \hyperlink{hyp:wmin}{S3}, 
there exist a constant $C_9>0$ so that the following holds true for each $\epsilon,\gamma>0$. If \hyperlink{hyp:Tproperties2}{C5} holds for some $T>0$, then for each $\mathbf{b}=(b_-,b_+)\in \R^2$ with $b_-, b_+\in[w_{\mathrm{min}},\sqrt{c_*}/3]$ and $t \in [0,T]$, the bound
$\|v(t)\|_{L^2_{\mathbf{b}}}\leq \delta_6 b_+^{1/2} $ implies
    \begin{align}
        \|M(t,v(t);\alpha,\Omega)\|_{L^2_{\mathbf{b}}}\leq& C_9 \epsilon|{f}(\gamma t)|(1+\|{v}(t)\|_{H^1_{\mathbf{b}}})\|{v}(t)\|_{H^1_{\mathbf{b}}}\nonumber\\
        &+C_9\Big(\|{v}(t)\|_{H^1_{\mathbf{b}}}+\|{v}(t)\|_{H^1_{\mathbf{b}}}^2 +\|{v}(t)\|_{H^1}\Big)\|{v}(t)\|_{H^1_{\mathbf{b}}} \nonumber\\
        &+C_9\epsilon|{f}(\gamma t)|.\label{eqn:mvtestimate}
    \end{align}
    If, furthermore, $\delta>0$ satisfies $t+\delta\leq T$ and
    \[\sup_{s\in [0,\delta]}\|v(t+s)\|_{L^2_{\mathbf{b}}}\leq \delta_6 b_+^{1/2},\] 
    then
    \begin{align}
       \sup_{0\leq s
       \leq \delta} |c(t\!+\!s)\!-\!c(t)|\leq C_9\delta \Big(\epsilon (1\!+\!\sup_{s\in [0,\delta]}\|v(t\!+\!s)\|_{L^2_{\mathbf{b}}})\!+\!\sup_{s\in [0,\delta]}\|v(t\!+\!s)\|_{L^2_{\mathbf{b}}}^2 \Big).\label{eqn:ccontrol}
    \end{align}
\end{lemma}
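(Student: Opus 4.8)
The approach is to estimate $M(t,v;\alpha,\Omega)$ piece by piece, using the four summands in its definition: the rescaled nonlinearity $\alpha^{-3}N(v)$, the dilation term $\tfrac{\alpha_t}{\alpha}\big((2+x\partial_x)\phi_{c_0}+2v\big)$, the translation term $\tfrac{\Omega_t}{\alpha}\partial_x(\phi_{c_0}+v)$, and the forcing term $\epsilon f(\gamma t)(\phi_{c_0}+v)$. The modulation derivatives $\alpha_t$ and $\Omega_t$ are controlled through \Cref{lem:estimates}: although that lemma is formulated under \hyperlink{hyp:Tproperties}{C1}, its proof uses only the modulation system \eqref{eqn:modulation} and the bound $\alpha(t)\in[\alpha_{\mathrm{min}},\alpha_{\mathrm{max}}]$, both of which are supplied by \hyperlink{hyp:Tproperties2}{C5}; since $\mathbf{b}\in[w_{\mathrm{min}},\sqrt{c_*}/3]^2$ and $\|v(t)\|_{L^2_{\mathbf{b}}}\le\delta_6 b_+^{1/2}$, the hypotheses of \Cref{lem:estimates} are met and \eqref{eqn:modulationestimate} gives $|\alpha_t(t)|+|\Omega_t(t)|\le C_6\big(\epsilon|f(\gamma t)|(1+b_+^{-1/2}\|v(t)\|_{L^2_{\mathbf{b}}})+\|v(t)\|_{L^2_{\mathbf{b}}}^2\big)$, where the factor $b_+^{-1/2}$ is controlled by $w_{\mathrm{min}}^{-1/2}$.

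\textbf{Term-by-term estimates.}
For the nonlinearity I would use $H^1(\R)\hookrightarrow L^\infty(\R)$ to write $\|N(v)\|_{L^2_{\mathbf{b}}}=2\|v\partial_x v\|_{L^2_{\mathbf{b}}}\le 2\sqrt{2}\,\|v\|_{H^1}\|\partial_x v\|_{L^2_{\mathbf{b}}}$, so after dividing by $\alpha^3\ge\alpha_{\mathrm{min}}^3$ this contributes a multiple of $\|v\|_{H^1}\|v\|_{H^1_{\mathbf{b}}}$. For the soliton-dependent quantities $\|(2+x\partial_x)\phi_{c_0}\|_{L^2_{\mathbf{b}}}$, $\|\partial_x\phi_{c_0}\|_{L^2_{\mathbf{b}}}$ and $\|\phi_{c_0}\|_{L^2_{\mathbf{b}}}$, the exponential decay of $\phi_{c_0}$ and its derivatives (at a rate exceeding $b_+$) bounds each by a constant depending only on the fixed parameters, uniformly over $\mathbf{b}$ in the prescribed range. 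Combining this with the bound on $|\alpha_t|+|\Omega_t|$, with $|\alpha|^{-1}\le\alpha_{\mathrm{min}}^{-1}$, and with $\|v\|_{L^2_{\mathbf{b}}},\|\partial_x v\|_{L^2_{\mathbf{b}}}\le\|v\|_{H^1_{\mathbf{b}}}$ wherever $v$ occurs in the last three summands, every resulting term is either the standalone $\epsilon|f(\gamma t)|$ (arising from $\tfrac{\alpha_t}{\alpha}(2+x\partial_x)\phi_{c_0}$, from $\tfrac{\Omega_t}{\alpha}\partial_x\phi_{c_0}$, and from $\epsilon f\,\phi_{c_0}$), a multiple of $\epsilon|f(\gamma t)|\|v\|_{H^1_{\mathbf{b}}}$ or $\epsilon|f(\gamma t)|\|v\|_{H^1_{\mathbf{b}}}^2$, or a multiple of $\|v\|_{H^1_{\mathbf{b}}}^2$, $\|v\|_{H^1_{\mathbf{b}}}^3$, or $\|v\|_{H^1}\|v\|_{H^1_{\mathbf{b}}}$. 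These are exactly the three groups on the right of \eqref{eqn:mvtestimate}, and gathering the constants into a single $C_9$ establishes it.

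\textbf{Fluctuations of $c$.}
For \eqref{eqn:ccontrol} I would differentiate $c(t)=c_0\alpha^{-2}(t)$ to get $c_t=-2c_0\alpha^{-3}\alpha_t$, so $|c_t(\tau)|\le 2c_0\alpha_{\mathrm{min}}^{-3}|\alpha_t(\tau)|$ for every $\tau\in[t,t+\delta]$; the hypothesis $\sup_{s\in[0,\delta]}\|v(t+s)\|_{L^2_{\mathbf{b}}}\le\delta_6 b_+^{1/2}$ makes \eqref{eqn:modulationestimate} applicable at each such $\tau$. Using $b_+\ge w_{\mathrm{min}}$ and absorbing $\|f\|_{L^\infty}$ (finite by \hyperlink{hyp:initial}{S1}) into the constant, this yields $|c_t(\tau)|\le C\big(\epsilon(1+\|v(\tau)\|_{L^2_{\mathbf{b}}})+\|v(\tau)\|_{L^2_{\mathbf{b}}}^2\big)$. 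Integrating over $[t,t+s]$ for $s\le\delta$ and bounding the integrand by its supremum over $[t,t+\delta]$ produces \eqref{eqn:ccontrol}.

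\textbf{Main obstacle.}
There is no real difficulty here beyond the bookkeeping: one must check that the weighted soliton norms are finite and bounded uniformly over the compact ranges of $c_0$ and $\mathbf{b}$ (which needs the decay rate $\sqrt{c_0}$ to dominate $b_+$), and that each of the many cross-terms generated by the products $\alpha_t\cdot(\cdot)$ and $\Omega_t\cdot(\cdot)$ falls into one of the three groups allowed on the right-hand side of \eqref{eqn:mvtestimate}. The use of \Cref{lem:estimates} under \hyperlink{hyp:Tproperties2}{C5} instead of \hyperlink{hyp:Tproperties}{C1} is harmless, for the reason noted at the outset.
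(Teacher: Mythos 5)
Your proposal is correct and follows essentially the same route as the paper: the same term-by-term decomposition of $M$, the same Sobolev-embedding bound \eqref{eqn:nonlinear} for $N(v)$, the same invocation of \eqref{eqn:modulationestimate} for $|\alpha_t|+|\Omega_t|$, and the same integration of $c_t=-2c_0\alpha^{-3}\alpha_t$ for \eqref{eqn:ccontrol}. Your explicit remark that \Cref{lem:estimates} remains applicable under \hyperlink{hyp:Tproperties2}{C5} because its proof only uses \eqref{eqn:modulation} and the bounds on $\alpha$ is a point the paper leaves implicit, and is correct.
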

\begin{proof}
Writing $v(t)=v$, estimate \eqref{eqn:mvtestimate} follows by computing
    \begin{align*}
        \|M(t,v;\alpha,\Omega)\|_{L^2_{\mathbf{b}}}\leq& \alpha_{\mathrm{min}}^{-3}\|N({v})\|_{L^2_{\mathbf{b}}}+2\left|\frac{{\alpha}_t}{{\alpha}}\right|\big\|(1+\tfrac{1}{2}x\partial_x)\phi_{c_0}+{v}\big\|_{L^2_{\mathbf{b}}}\\
        &+\left|\frac{{\Omega}_t}{{\alpha}}\right|\|\partial_x(\phi_{c_0}+{v})\|_{L^2_{\mathbf{b}}}+\epsilon|{f}(\gamma t)|\|\phi_{c_0}+{v}\|_{L^2_{\mathbf{b}}},
    \end{align*}
    estimating $\|N({v})\|_{L^2_{\mathbf{b}}}$ as in \eqref{eqn:nonlinear}, and applying the estimate \eqref{eqn:modulationestimate}.
To prove \eqref{eqn:ccontrol}, we estimate 
\[\sup_{s\in [0,\delta]}|{c}(t+s)-{c}(t)|
\leq \int_t^{t+\delta}|{c}_{t}(t^\prime)|\d t^\prime,\] and write
     \[\epsilon_1=\sup_{s\in [0,\delta]}\|v(t+s)\|_{L^2_{\mathbf{b}}}.\]
Using $
    c_t=-2c_0\alpha^{-3} \alpha_t$
and \eqref{eqn:modulationestimate}, we obtain
% \[-\alpha_t/\alpha\leq C \epsilon  \epsilon_1 e^{-(b+\lambda )\epsilon t}+C\epsilon_1e^{-b\epsilon t}\lesssim \epsilon_1 e^{-b\epsilon t}.\]
% Similarly
\begin{align*}\sup_{s\in [0,\delta]}|{c}(t+s)-{c}(t)|
\leq & 2c_0 \alpha^{-3}_{\mathrm{min}}C_6 \int_t^{t+\delta} \Big(\epsilon|{f}(\gamma t^\prime)|(1+b_+^{-1/2}\epsilon_1)+\epsilon_1^2 \Big)\d t^\prime \\
\leq & 2c_0 \alpha^{-3}_{\mathrm{min}}C_6\delta\big(\|f\|_{\infty}\epsilon (1+w_{\mathrm{min}}^{-1/2}\epsilon_1)+\epsilon_1^2 \big). \qedhere  \end{align*}
\end{proof}
As a final preparation for the proof of \Cref{prop:duhamel}, we establish control of $v(t)$ in the space $H^1_{\mathbf{w}(t+\delta/2)}$. We do so based on the integral equation
\begin{align}
    {v}(t)=&e^{\mathcal{L}_{c_0}\int_0^t \alpha^{-3}(t^\prime)\d t^\prime}\overline{v}_0 \nonumber\\
    &+\int_0^t e^{\mathcal{L}_{c_0}\int_s^t \alpha^{-3}(t^\prime)\d t^\prime}\Big(\frac{{\alpha}_t(s)}{{\alpha}(s)}x\partial_x{v}(s)+M(s,v(s);\alpha,\Omega)\Big) \d s,\label{eqn:mild2}
\end{align}
which holds for all $t\geq 0$ and is the mild form of \eqref{eqn:strong2}. 
Note that the weight is evaluated at time $t+\delta/2$, whereas $v$ is evaluated at $t$. This gap allows us to estimate the $x\partial_x v$ term in \eqref{eqn:mild2} without introducing a singularity. 
\begin{lemma}\label{lem:duhamel}
    Assuming \hyperlink{hyp:initial}{S1}, \hyperlink{hyp:alphabound}{S2}, and the choice \eqref{eqn:wmin} for \hyperlink{hyp:wmin}{S3},
    %Assuming \hyperlink{hyp:initial}{S1} - \hyperlink{hyp:wmin}{S3}, 
    there exists a constant $C_{10}>0$ so that the following holds true. If 
    $\hyperlink{hyp:epsilon}{C3}$ and \hyperlink{hyp:exponential}{C4} are satisfied, and 
    \hyperlink{hyp:Tproperties2}{C5} holds for some $T>0$,
    %\hyperlink{hyp:exponential}{C4} and \hyperlink{hyp:Tproperties2}{C5} hold for some $T>0$, and $\epsilon\in (0,1], \gamma>0$ and $p\in[0,\frac{1}{4})$ satisfy $\hyperlink{hyp:epsilon}{C3}$, 
    then for each $t\in[0,T]$, the bound
    \[\sup_{s\in[0,t]}e^{\gamma s} \|{v}
(s)\|_{H^1_{\mathbf{w}(s)}}\leq \min\{\delta_6 w_\infty^{1/2},1\} \]
 implies
\begin{align*}
    e^{ \gamma t}\| {v}(t)\|_{H^1_{\mathbf{w}(t+\delta/2)}}\leq & C_{10}K_{\epsilon,\gamma}(t) \sup_{s\in[0,t]}e^{ \gamma s}\|{v}(s)\|_{H^1_{\mathbf{w}(s)}}+C_{10}\big(\|\overline{v}_*\|_{H^1}+\|\overline{v}_*\|_{H^1_w}+\epsilon \big),
\end{align*}
where $K_{\epsilon,\gamma}(t)$ is defined in \eqref{eqn:kdef}.
\end{lemma}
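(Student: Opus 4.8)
The plan is to read off $v(t)$ from the mild identity \eqref{eqn:mild2} and estimate the three resulting pieces separately. First I would use that, by the orthogonality \eqref{eqn:orthogonality} transported to the rescaled frame via \Cref{lem:scaling}, one has $P_{c_0}v(t)=0$ for every $t$ and $P_{c_0}\overline v_0=0$; since $Q_{c_0}$ is the complementary spectral projection it commutes with $e^{\mathcal L_{c_0}r}$, so I may insert $Q_{c_0}$ into the Duhamel integral of \eqref{eqn:mild2} and split $v(t)=(\mathrm{lin})+(\mathrm{x})+(\mathrm{M})$ into the contributions of $\overline v_0$, of $\tfrac{\alpha_t}{\alpha}x\partial_xv$, and of $M(\cdot,v;\alpha,\Omega)$. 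Writing $\mu_t=\sup_{s\in[0,t]}e^{\gamma s}\|v(s)\|_{H^1_{\mathbf w(s)}}$, the hypothesis becomes $\mu_t\le\min\{\delta_6w_\infty^{1/2},1\}$, and \eqref{eqn:kdef} trivially gives $K_{\epsilon,\gamma}(t)\ge\mu_t$, $K_{\epsilon,\gamma}(t)\ge\gamma^{-1}\mu_t^2$, $K_{\epsilon,\gamma}(t)\ge\sup_{[0,t]}\|v\|_{H^1}$ and $K_{\epsilon,\gamma}(t)\ge\epsilon+\tfrac{\epsilon^{1+p}}{\gamma}$ — the four slots into which the error terms will be fit. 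I would then fix once and for all a rate $\beta\in\bigl(0,w_{\mathrm{min}}(c_0-w_{\mathrm{min}}^2)\bigr)$, close enough to the endpoint that $\lambda:=\beta\alpha_{\mathrm{max}}^{-3}$ satisfies $\lambda-2\gamma\ge\kappa$ for a constant $\kappa>0$ depending only on \hyperlink{hyp:initial}{S1}--\hyperlink{hyp:alphabound}{S2} and \eqref{eqn:wmin}; this is possible by the bound $\gamma\le\tfrac13\alpha_{\mathrm{max}}^{-3}w_{\mathrm{min}}(c_0-w_{\mathrm{min}}^2)$ of \hyperlink{hyp:epsilon}{C3}. Because $u\mapsto u(c_0-u^2)$ is increasing on $[w_{\mathrm{min}},w]$ and all weights occurring — every $\mathbf w(r)$ and the constant weight $(w_{\mathrm{min}},w)$ — take values there, \Cref{prop:smoothing} is available with this single $\beta$, and $\int_s^t\alpha^{-3}\,\d\sigma\ge\alpha_{\mathrm{max}}^{-3}(t-s)$ converts its exponential factor into $e^{-\lambda(t-s)}$.

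The piece $(\mathrm{lin})$ is the easiest: $\overline v_0\in H^1\cap H^1_w\subset L^2_{(w_{\mathrm{min}},w)}$ with $\|\overline v_0\|_{H^1_{(w_{\mathrm{min}},w)}}\lesssim\|\overline v_0\|_{H^1}+\|\overline v_0\|_{H^1_w}\lesssim\|\overline v_*\|_{H^1}+\|\overline v_*\|_{H^1_w}$ (the last step by the Lipschitz dependence of the orthogonal decomposition, \Cref{lem:implicit}); since $w_-(t+\delta/2)>w_{\mathrm{min}}$ and $w_+(t+\delta/2)<w$ for $\delta>0$, item 2 of \Cref{lem:observations} moves the weight down to $\mathbf w(t+\delta/2)$ and \Cref{prop:smoothing} (using \Cref{thm:linearstability} to absorb the short-time $\tau^{-1/2}$) bounds it by $e^{-\lambda t}(\|\overline v_*\|_{H^1}+\|\overline v_*\|_{H^1_w})$, which after multiplying by $e^{\gamma t}\le e^{\lambda t}$ is of the required form. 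For $(\mathrm{M})$ I would again use item 2 of \Cref{lem:observations} to pass from $\mathbf w(s)$ (which dominates $\mathbf w(t+\delta/2)$ for $s\le t$) to $\mathbf w(t+\delta/2)$, then bound $\|M(s,v(s);\alpha,\Omega)\|_{L^2_{\mathbf w(s)}}$ by \Cref{lem:Mestimate} with $\mathbf b=\mathbf w(s)$ (legitimate since $\|v(s)\|_{L^2_{\mathbf w(s)}}\le e^{-\gamma s}\mu_t\le\delta_6w_\infty^{1/2}\le\delta_6w_+(s)^{1/2}$), apply \Cref{prop:smoothing}, and integrate against the integrable kernel $\bigl(1+(\int_s^t\alpha^{-3}\,\d\sigma)^{-1/2}\bigr)e^{-\lambda(t-s)}$. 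Using $|f(\gamma s)|\le e^{-\gamma s}$ from \hyperlink{hyp:exponential}{C4} and $\|v(s)\|_{H^1_{\mathbf w(s)}}=e^{-\gamma s}\mu_s\le\mu_t\le1$, every summand of \Cref{lem:Mestimate} except the pure-forcing term $C_9\epsilon|f(\gamma s)|$ carries a factor $\|v(s)\|_{H^1_{\mathbf w(s)}}=e^{-\gamma s}\mu_s$ that cancels the $e^{\gamma t}$ and integrates (thanks to $\lambda-\gamma,\lambda-2\gamma\ge\kappa$) to $O(K_{\epsilon,\gamma}(t)\mu_t)$, quadratic-in-weighted-norm terms only costing an extra $\mu_t\le K_{\epsilon,\gamma}(t)$; the pure-forcing term integrates to $\le C_9\epsilon/(\lambda-\gamma)\lesssim\epsilon$. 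Hence $e^{\gamma t}\|(\mathrm{M})\|_{H^1_{\mathbf w(t+\delta/2)}}\lesssim K_{\epsilon,\gamma}(t)\mu_t+\epsilon$.

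The hard part is $(\mathrm{x})$, and this is where the $\delta/2$-offset in the weight is used. After \Cref{prop:smoothing} in the weight $\mathbf w(t+\delta/2)$ I must control $\|x\partial_xv(s)\|_{L^2_{\mathbf w(t+\delta/2)}}$, which I would do via item 3 of \Cref{lem:observations} with target weight $\mathbf w(t+\delta/2)$ and source weight $\mathbf w(s)$ — admissible since $w_-(s)<w_-(t+\delta/2)$ and $w_+(s)>w_+(t+\delta/2)$ for $s\le t<t+\delta/2$. This introduces the factor $\bigl(w_-(t+\delta/2)-w_-(s)\bigr)^{-1}$ (and the analogous $+$-factor), which is singular as $s\uparrow t$; the saving grace is that the weight functions \eqref{eqn:weightdef} are built so that $|W_\pm'(\tau)|\gtrsim e^{-\tau}$, whence, with $\int_s^{t+\delta/2}\,\d\sigma\ge\delta/2$ and $\gamma\delta\le1$ from \hyperlink{hyp:epsilon}{C3},
\[w_-(t+\delta/2)-w_-(s)\ \gtrsim\ e^{-\gamma s}\bigl(1-e^{-\gamma\delta/2}\bigr)\ \gtrsim\ e^{-\gamma s}\gamma\delta .\]
The resulting factor $\lesssim e^{\gamma s}(\gamma\delta)^{-1}$ is exactly neutralised by the decay $\|\partial_xv(s)\|_{L^2_{\mathbf w(s)}}\le e^{-\gamma s}\mu_s$, giving $\|x\partial_xv(s)\|_{L^2_{\mathbf w(t+\delta/2)}}\lesssim(\gamma\delta)^{-1}\mu_s$. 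Combining with $|\alpha_t(s)/\alpha(s)|\lesssim\epsilon|f(\gamma s)|+\|v(s)\|_{L^2_{\mathbf w(s)}}^2\lesssim\epsilon e^{-\gamma s}+e^{-2\gamma s}\mu_t^2$ from \Cref{lem:estimates}, with $\gamma\delta=\gamma\epsilon^{-p}$, and integrating as before, one arrives at
\[e^{\gamma t}\|(\mathrm{x})\|_{H^1_{\mathbf w(t+\delta/2)}}\ \lesssim\ \frac{\epsilon^{1+p}}{\gamma}\mu_t+\frac{\epsilon^{p}}{\gamma}\mu_t^3\ \lesssim\ K_{\epsilon,\gamma}(t)\,\mu_t ,\]
using $\tfrac{\epsilon^{1+p}}{\gamma}\le K_{\epsilon,\gamma}(t)$ and $\tfrac{\epsilon^p}{\gamma}\mu_t^3\le(\gamma^{-1}\mu_t^2)\mu_t\le K_{\epsilon,\gamma}(t)\mu_t$. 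Adding the three pieces proves the lemma, with $C_{10}$ depending only on the data of \hyperlink{hyp:initial}{S1}, \hyperlink{hyp:alphabound}{S2} and the choice \eqref{eqn:wmin}. The one genuinely delicate point is this bookkeeping in $(\mathrm{x})$: balancing the prefactor $e^{\gamma t}$, the semigroup decay $e^{-\lambda(t-s)}$ with $\lambda$ a fixed constant strictly above $2\gamma$, the forcing decay $e^{-\gamma s}$, and the weight-gap lower bound $\gtrsim e^{-\gamma s}\gamma\delta$, so that the surviving $(\gamma\delta)^{-1}=\epsilon^p\gamma^{-1}$ lands precisely in the $\tfrac{\epsilon^{1+p}}{\gamma}$-slot of $K_{\epsilon,\gamma}$ (and its cubic companion in the $\gamma^{-1}\mu_t^2$-slot).
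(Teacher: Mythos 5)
Your proposal is correct and follows essentially the same route as the paper: the mild form \eqref{eqn:mild2}, the three-way split into linear, $x\partial_x v$, and $M$ contributions, \Cref{prop:smoothing} with the $e^{\gamma t}e^{-\beta(t-s)}=e^{-(\beta-\gamma)(t-s)}e^{\gamma s}$ bookkeeping, \Cref{lem:estimates} and \Cref{lem:Mestimate} for the coefficients, and item 3 of \Cref{lem:observations} combined with the weight gap $\gtrsim e^{-\gamma s}\gamma\delta$ to tame the $x\partial_x v$ term. The only cosmetic difference is that you rederive that weight-gap lower bound directly from $|W_\pm'|\gtrsim \gamma e^{-\gamma\cdot}$ rather than invoking the paper's \Cref{lem:weight}, which is an equally valid (and arguably slicker) way to reach the same $(\gamma\delta)^{-1}=\epsilon^p/\gamma$ factor.
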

\begin{proof}% [Proof of \Cref{lem:duhamel}]
Pick $t\in[0,T]$ and set
\[\epsilon_1=\sup_{s\in[0,t]} e^{ \gamma s}\|{v}
(s)\|_{H^1_{\mathbf{w}(s)}} \quad \text{and} \quad \epsilon_2=\sup_{s\in[0,t]}\|{v}
(s)\|_{H^1}.\]
Using \eqref{eqn:mild2}, we estimate
\begin{align}\label{eqn:forv0}
    \| v(t)\|_{H^1_{\mathbf{w}(t+\delta/2)}}\stackrel{\eqref{eqn:stabilityandsmoothing}}{\leq}Me^{-\beta t}\|\overline{v}_0\|_{H^1_{\mathbf{w}(t+\delta/2)}}+M\alpha_{\mathrm{min}}^{3/2}(I_1+I_2),
\end{align}
for $\beta =\tfrac{1}{2}\alpha^{-3}_{\mathrm{max}}w_{\mathrm{min}}(c_0-w_{\mathrm{min}}^2)$ with
\begin{align}
    I_1=&\int_0^t e^{-\beta(t-s)}(t-s)^{-1/2}\left|\frac{{\alpha}_t(s)}{{\alpha}(s)}\right| \|x\partial_x{v}(s)\|_{L^2_{\mathbf{w}(t+\delta/2)}} \d s;\label{eqn:I1}\\
    I_2=&\int_0^t e^{-\beta(t-s)}(t-s)^{-1/2}\|M(s,v(s);\alpha,\Omega)\|_{L^2_{\mathbf{w}(t+\delta/2)}}\d s.\label{eqn:I2}
\end{align}
Here, we have used that $\int_s^t\!\! \alpha^{-3}(t^\prime)\d t^\prime\!\geq\! \alpha^{-3}_{\mathrm{max}}(t\!-\!s)$. Using \Cref{lem:observations} and \Cref{lem:implicit}, we estimate 
\begin{align*}
    \|\overline{v}_0\|_{H^1_{\mathbf{w}(t+\delta/2)}}\leq \|\overline{v}_0\|_{H^1_{\mathbf{w}(0)}}\leq \|\overline{v}_0\|_{H^1_{w}}+\|\overline{v}_0\|_{H^1}\leq C_2(\|\overline{v}_*\|_{H^1}+\|\overline{v}_*\|_{H^1_w})
\end{align*}
in \eqref{eqn:forv0}. Focusing on $I_1$, we use \Cref{lem:observations} to estimate
\begin{align}\label{eqn:xforI1}
    \|x\partial_x{v}(s)\|_{L^2_{\mathbf{w}(t+\delta/2)}}\leq& e^{-1}  q(t,s,\delta)\|{v}(s)\|_{H^1_{\mathbf{w}(s)}} ,
\end{align}
where
\begin{align}q(t,s,\delta):=\tfrac{1}{w_-(t+\delta/2)-w_-(s)}+\tfrac{1}{w_+(s)-w_+(t+\delta/2)}.\label{eqn:q}\end{align}
Furthermore, we use \Cref{lem:estimates} to estimate
\begin{align}\left|\frac{{\alpha}_t(s)}{{\alpha}(s)}\right|\stackrel{\eqref{eqn:modulationestimate}}{\leq} &\alpha_{\mathrm{min}}^{-1}C_6\Big( \epsilon|f(\gamma s)| (1+w_{\infty}^{-1/2}\|v(s)\|_{L^2_{\mathbf{w}(s)}})+\|v(s)\|^2_{L^2_{\mathbf{w}(s)}}\Big)\nonumber\\
\leq &\alpha_{\mathrm{min}}^{-1}C_6\big( \epsilon e^{-\gamma s} (1+w_{\infty}^{-1/2}\epsilon_1)+\epsilon_1^2e^{-2\gamma s}\big)\nonumber \\
\leq & \tilde{C}_1 (\epsilon+\epsilon_1^2) e^{-\gamma s}\label{eqn:modforI1}\end{align}
for some constant $\tilde{C}_1>0$. Substituting \eqref{eqn:xforI1} and \eqref{eqn:modforI1} into \eqref{eqn:I1} yields
\begin{align*}
    e^{\gamma t}I_1\!\leq\! \tilde{C}_1 e^{-1}(\epsilon\!+\!\epsilon_1^2) \int_0^t  e^{-\gamma s} q(t,s,\delta)e^{-(\beta-\gamma)(t\!-\!s)}(t-s)^{-1/2}\d s \sup_{s\in[0,t]}e^{\gamma s}\|{v}(s)\|_{H^1_{\mathbf{w}(s)}},
\end{align*}
where we have used 
\begin{align}e^{\gamma t}e^{-\beta(t-s)}=e^{-(\beta-\gamma)(t-s)}e^{\gamma s}.\label{eqn:exptrick}\end{align} Invoking \Cref{lem:weight} to estimate 
\[\sup_{s\in[0,t]}e^{-\gamma  s}|q(t,s,\delta)|\leq \frac{\tilde{C}_2}{\gamma\delta}\] 
for some $\tilde{C}_2 > 0$, we find
\begin{align*}
    e^{\gamma t}I_1\leq & \tilde{C}_3 (\gamma \delta)^{-1}(\epsilon+\epsilon_1^2)\sup_{s\in[0,t]}e^{\gamma s}\|{v}(s)\|_{H^1_{\mathbf{w}(s)}}
\end{align*}
where
\[\tilde{C}_3=\tilde{C}_1\tilde{C}_2e^{-1}\int_0^\infty  e^{-(\beta-\gamma)s}s^{-1/2}\d s<\infty. \]
Recalling that $\delta=\epsilon^{-p}$ per \hyperlink{hyp:epsilon}{C3}, we have thus shown that
\[e^{\gamma t}I_1\leq \tilde{C}_3K_{\epsilon,\gamma}(t) \sup_{s\in[0,t]}e^{ \gamma s}\|{v}(s)\|_{H^1_{\mathbf{w}(s)}}.\]

Turning to $I_2$, we find via \eqref{eqn:exptrick} and \Cref{lem:observations} that 
\begin{align*}
    e^{\gamma t}I_2\leq &\int_0^t e^{-(\beta-\gamma)(t-s)}(t-s)^{-1/2}  e^{\gamma s}\|M(s,v(s);\alpha,\Omega)\|_{L_{\mathbf{w}(s)}^2} \d s.
\end{align*}
Applying \Cref{lem:Mestimate} yields
\begin{align*}
        \|M(s,v(s);\alpha,\Omega)\|_{L^2_{\mathbf{w}(s)}}\leq C_9 \big(\epsilon (1+\epsilon_1)+\epsilon_1+\epsilon_2+\epsilon_1^2 \big)\|{v}(s)\|_{H^1_{\mathbf{w}(s)}}
        +C_9\epsilon e^{-\gamma s}
    \end{align*}
so that 
\begin{align*}
    e^{\gamma t}I_2\leq
    \tilde{C}_4 (\epsilon +\epsilon_1+\epsilon_2) \sup_{s\in[0,t]}e^{\gamma s}\|{v}(s)\|_{H^1_{\mathbf{w}(s)}}+\tilde{C}_4 \epsilon
\end{align*}
for some $\tilde{C}_4>0$. The conclusion follows by setting 
$C_{10}=M\alpha_{\mathrm{min}}^{3/2}(\tilde{C}_3+\tilde{C}_4).$
% \begin{align*}
%     e^{b\epsilon t}\| {v}(t)\|_{H^1_{\mathbf{w}(t+\delta/2)}}\lesssim (\epsilon_1+\epsilon_2+\delta^{-1})\sup_{s\in[0,t]}e^{b\epsilon s}\|{v}(s)\|_{H^1_{\mathbf{w}(s)}}+\epsilon.
% \end{align*}
\end{proof}

We finally move on to the proof of \Cref{prop:duhamel}. We first control the fluctuations of $c(t)$ during the time-step $\delta$ through \Cref{lem:Mestimate}. This allows us to use the short-time result \Cref{cor:short}, after which we apply the long-time result \Cref{lem:duhamel}.

\begin{proof}[Proof of \Cref{prop:duhamel}]
We collect from \eqref{eqn:assdelta6} that
\begin{align*}
   \Big( \epsilon+\sup_{s\in [0,\delta]}\big(\|{v}
(t+s)\|_{H^1_{\mathbf{w}(t+s)}}+\|{v}
(t+s)\|_{H^1}\big)\Big)\delta^2\leq &\delta_{8}^{1-2p}.
\end{align*}
Furthermore, \Cref{lem:Mestimate} applied with weight $w_\infty$ provides the bound
\begin{align*}
\sup_{s\in [0,\delta]}|c(t\!+\!s)\!-\!c(t)|\delta^2\leq& C_9\delta^3 \Big(\epsilon (1+\sup_{s\in [0,\delta]}\|v(t+s)\|_{L^2_{w_\infty}})+\sup_{s\in [0,\delta]}\|v(t+s)\|_{L^2_{w_\infty}}^2 \Big)\\
\leq & C_9 \big(\delta_{8}^{1-3p} (1+\delta_{8})+\delta_{8}^{2-3p} \big).
\end{align*}

Picking $\delta_{8}$ small enough, we may apply \Cref{cor:short} at time $t$, which yields
\[e^{\gamma(t+\delta)}\sup_{s\in [0,\delta]}\|{v}(t+s)\|_{H^1_{\mathbf{w}(t+s)}}\leq C_5 \big(e^{\gamma(t+\delta)}\|{v}(t)\|_{H^1_{\mathbf{w}(t+\delta/2)}}+2\epsilon e^{\gamma(t+\delta)}e^{-\gamma t}\delta^2\big).\]
Applying \Cref{lem:duhamel}, we find
\begin{align*}e^{\gamma(t+\delta)}\sup_{s\in [0,\delta]}\|{v}(t+s)\|_{H^1_{\mathbf{w}(t+s)}}\leq &  \tilde{C}_1 K_{\epsilon,\gamma}(t) \sup_{s\in[0,t]}e^{\gamma s}\|{v}(s)\|_{H^1_{\mathbf{w}(s)}}\\
&+\tilde{C}_1\big(\|\overline{v}_*\|_{H^1}+\|\overline{v}_*\|_{H^1_w}+\epsilon+ \epsilon\delta^2\big),\end{align*}
where $\tilde{C}_1=C_5(C_{10}+2)e$. Taking the supremum over $t\in [0,T-\delta]$ and extending the supremum on the right-hand side
shows that
\begin{align*}
    \sup_{0\leq t \leq T}e^{\gamma t}\|{v}(t)\|_{H^1_{\mathbf{w}(t)}}
    \leq&\tilde{C}_1K_{\epsilon,\gamma}(T)\sup_{0\leq t \leq T}e^{\gamma t}\|{v}(t)\|_{H^1_{\mathbf{w}(t)}}\\
&+\tilde{C}_1\big(\|\overline{v}_*\|_{H^1}+\|\overline{v}_*\|_{H^1_w}+\epsilon+ \epsilon\delta^2\big).\end{align*}
% We can extend the supremum on the lefthandside via \Cref{cor:short} at time $0$:
% \[\sup_{0\leq t \leq \delta}\|{v}(t)\|_{H^1_{\mathbf{w}(t)}}\lesssim \epsilon (\sqrt{\overline{\delta}}\epsilon ^{-p/2 }+\overline{\delta}^2\epsilon ^{-2p}).\]
For $\delta_{8}$ small enough, we have $\tilde{C}_1 K_{\epsilon,\gamma}(T)<1/2$, and hence
\begin{equation*}
    \sup_{0\leq t \leq T}e^{\gamma t}\|{v}(t)\|_{H^1_{\mathbf{w}(t)}}
    \leq 2\tilde{C}_1\big(\|\overline{v}_*\|_{H^1}+\|\overline{v}_*\|_{H^1_w}+\epsilon+\epsilon^{1-2p}\big).   \qedhere
\end{equation*}
%and an application of .
\end{proof}

\section{Energy evolution}
\label{sec:energy}
In this section, we establish control of the perturbation in the unweighted spaces $L^2$ and $H^1$. Since the latter norm appears explicitly
in \eqref{eqn:kdef}, this is a crucial step towards applying
the long-time results developed in \Cref{sec:longtime}.
%since

%complements the main result of \Cref{sec:longtime}, \Cref{prop:duhamel}, which asserts control over $\|v(t)\|_{H^1_{\mathbf{w}(t)}}$ provided that (among other conditions) the perturbation $v(t)$ remains small in the unweighted $H^1$-norm.

Using the decomposition \eqref{eqn:unscaled} and the fact that $\langle \overline{v}(t),\phi_{c(t)}\rangle=0$, we observe via Pythagoras' theorem that
\begin{align}
    \|\overline{v}(t)\|^2_{L^2}=\|u(t,\cdot+\xi(t))\|_{L^2}^2-\|\phi_{c(t)}\|_{L^2}^2=\|u(t)\|_{L^2}^2-\|\phi_{c(t)}\|_{L^2}^2. \label{eqn:pyth}
    %=e^{2\int_0^t f(s)  \d s}6c_0^{3/2}-6c^{3/2}(t)=6(c_*^{3/2}(t)-c^{3/2}(t)),
\end{align}
We set out to control the $L^2$-norm of the perturbation $\overline{v}$
by estimating the time-derivative of \eqref{eqn:pyth}. The key point is that, in the right-hand side of \eqref{eqn:below} below, the term $\|\overline{v}(t)\|_{L^2}^2$ comes with an integrable factor $\epsilon |f(\gamma t)|$, and the remaining terms are all integrable and small. Invoking Gr\"{o}nwall's inequality, this can subsequently
be used to establish the desired control. % over the $L^2$-norm of $\overline{v}$.
We recall that $\delta_6$ is the constant introduced in \Cref{lem:estimates}.
\begin{lemma}\label{lem:L2derivative}
Assuming \hyperlink{hyp:initial}{S1}, \hyperlink{hyp:alphabound}{S2}, and the choice \eqref{eqn:wmin} for \hyperlink{hyp:wmin}{S3},
%Assuming \hyperlink{hyp:initial}{S1} - \hyperlink{hyp:wmin}{S3}, 
there exists a constant $C_{11}>0$ so that the following holds true for each $\epsilon,\gamma>0$. If \hyperlink{hyp:Tproperties2}{C5} holds for some $T>0$ and $t\in[0,T]$, then the bound
\begin{align*}\|{v}(t)\|_{L^2_{w_\infty}}\leq\delta_{6}a^{1/2}_\infty,\end{align*} implies
\begin{align}    |\partial_t \|\overline{v}(t)\|^2_{L^2}|
   \leq C_{11} \Big(\epsilon |f(\gamma t)|\|\overline{v}(t)\|_{L^2}^2+  \epsilon|f(\gamma t)| \|v(t)\|_{L^2_{w_\infty}}+\|v(t)\|^2_{L^2_{w_\infty}}\Big).\label{eqn:below}\end{align}
\end{lemma}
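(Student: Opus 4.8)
The plan is to differentiate the identity \eqref{eqn:pyth} in time and substitute the evolution equation \eqref{eqn:gammaversion} for $u$ together with the modulation equation for $c(t)$. First I would compute
\[
\partial_t\|\overline v(t)\|_{L^2}^2 = \partial_t\|u(t)\|_{L^2}^2 - \partial_t\|\phi_{c(t)}\|_{L^2}^2.
\]
For the first term, the unforced part $-\partial_x^3 u - 2u\partial_x u$ contributes nothing to $\partial_t\|u\|_{L^2}^2$ (as in the proof of \Cref{lem:regularity}), so $\partial_t\|u(t)\|_{L^2}^2 = 2\epsilon f(\gamma t)\|u(t)\|_{L^2}^2$. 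For the second term, using $\mathcal N[\phi_c]=6c^{3/2}$ we get $\partial_t\|\phi_{c(t)}\|_{L^2}^2 = 9 c^{1/2}(t)\, c_t(t)$, and then I would plug in $c_t = -2c_0\alpha^{-3}\alpha_t$ with $\alpha_t$ replaced by its leading-order value $-\tfrac23\alpha\epsilon f(\gamma t)$ plus the error controlled by \Cref{lem:estimates}. Subtracting, the leading forcing terms $2\epsilon f(\gamma t)\|u\|_{L^2}^2$ and $2\epsilon f(\gamma t)\|\phi_{c(t)}\|_{L^2}^2$ should cancel against each other up to $\|\overline v\|_{L^2}^2$ (via \eqref{eqn:pyth}) and cross terms; indeed $\|u\|_{L^2}^2 = \|\phi_{c(t)}\|_{L^2}^2 + \|\overline v\|_{L^2}^2$ exactly by Pythagoras, so the $\|\phi_{c(t)}\|_{L^2}^2$ contributions should match precisely at leading order in the modulation, leaving $2\epsilon f(\gamma t)\|\overline v\|_{L^2}^2$ plus a remainder.

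The remainder comes from two sources: the deviation of $\alpha_t$ from its leading-order value $-\tfrac23\alpha\epsilon f(\gamma t)$, which by \eqref{eqn:modulationestimproved} is bounded by $C\epsilon|f(\gamma t)|\,w_\infty^{-1/2}\|v\|_{L^2_{w_\infty}} + C\|v\|_{L^2_{w_\infty}}^2$; and the exact algebraic discrepancy between $\partial_t\|\phi_{c(t)}\|_{L^2}^2$ as computed from the modulation ODE and what is needed for exact cancellation. Here I would exploit the orthogonality $\langle\overline v(t),\phi_{c(t)}\rangle=0$, which is precisely what makes \eqref{eqn:pyth} a clean Pythagoras identity and ensures no linear-in-$\overline v$ term survives. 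The constant $a_\infty$ appearing in the hypothesis is presumably shorthand for $w_\infty$ (or a fixed multiple), and the smallness assumption $\|v(t)\|_{L^2_{w_\infty}}\le \delta_6 a_\infty^{1/2}$ is there exactly so that \Cref{lem:estimates} applies with $\mathbf b = (w_\infty,w_\infty)$ (after checking $w_\infty \le \sqrt{c_*}/3$, which holds since $w_\infty < w < \sqrt{c_*}/3$). One also needs that $c(t)$ — equivalently $\alpha(t)$ — stays in a fixed compact range, which is guaranteed by \hyperlink{hyp:Tproperties2}{C5}, so all the $c$-dependent quantities like $c^{1/2}(t)$, $\alpha_{\min}^{-3}$, $\langle\phi_{c_0},\phi_{c_0}\rangle$ etc. are bounded by absolute constants.

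Finally I would collect terms: the surviving leading term is $2\epsilon f(\gamma t)\|\overline v(t)\|_{L^2}^2$, bounded in absolute value by $2\epsilon|f(\gamma t)|\|\overline v(t)\|_{L^2}^2$; and the error terms are bounded by $C\epsilon|f(\gamma t)|\|v(t)\|_{L^2_{w_\infty}} + C\|v(t)\|_{L^2_{w_\infty}}^2$ (absorbing the $w_\infty^{-1/2}$ and the bounded $c$-factors into $C_{11}$, and using $\|\overline v\|_{L^2}$ bounded — via \eqref{eqn:pyth} and \hyperlink{hyp:Tproperties2}{C5} this is controlled, or one keeps it in the statement as written). I expect the main obstacle to be bookkeeping: carefully verifying that the $O(\epsilon)$ forcing contributions to $\partial_t\|u\|_{L^2}^2$ and to $\partial_t\|\phi_{c(t)}\|_{L^2}^2$ cancel up to exactly the claimed error, rather than leaving a stray $O(\epsilon|f|)$ term with no $\|v\|$ factor. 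The key to this cancellation is that $\|\phi_{c(t)}\|_{L^2}^2$ is a smooth function of $c$ and the leading-order modulation $c_t \approx \tfrac43 c\,\epsilon f(\gamma t)$ is precisely calibrated (via the $v=0$ reduction of \eqref{eqn:modulation}) so that $\partial_t\|\phi_{c(t)}\|_{L^2}^2 = 2\epsilon f(\gamma t)\|\phi_{c(t)}\|_{L^2}^2$ to leading order — which is exactly the heuristic \eqref{eqn:energyevo}–\eqref{eqn:leadingorderc} behind the whole paper.
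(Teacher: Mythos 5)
Your proposal is correct and follows essentially the same route as the paper: differentiate the Pythagoras identity \eqref{eqn:pyth}, use $\partial_t\|u\|^2_{L^2}=2\epsilon f(\gamma t)\|u\|^2_{L^2}$, compute $\partial_t\|\phi_{c(t)}\|^2_{L^2}$ via $c(t)=c_0\alpha^{-2}(t)$, and observe that the leading forcing contributions cancel exactly because the $v=0$ modulation gives $\alpha_t=-\tfrac23\alpha\epsilon f(\gamma t)$, with the deviation controlled by \eqref{eqn:modulationestimproved}. Your reading of $a_\infty$ as $w_\infty$ and the justification for applying \Cref{lem:estimates} with weight $w_\infty\le\sqrt{c_*}/3$ also match the paper's intent.
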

\begin{proof}
    Let us write $u=u(t)$, $v=v(t)$, and $\overline{v}=\overline{v}(t)$ for brevity. In the presence of forcing via $\epsilon f(\gamma t) $, we observe that
\begin{align}
    \partial_t \|u\|^2_{L^2} \stackrel{\eqref{eqn:gammaversion}}{=}& -2\langle u, \partial_x^3 u+2 u \partial_x u\rangle +2\epsilon f(\gamma t) \langle u,u\rangle \nonumber\\
    =& 2\epsilon f(\gamma t) \|u\|^2_{L^2}\stackrel{\eqref{eqn:unscaled}}{=}2\epsilon f(\gamma t) \big(\|\phi_{c(t)}\|^2_{L^2}+\|\overline{v}\|^2_{L^2}\big)\nonumber\\
    =&12\epsilon f(\gamma t) c^{3/2}(t)+2\epsilon f(\gamma t)\|\overline{v}\|^2_{L^2},\label{eqn:L2compute}
\end{align}
where we have used that $ \|\phi_{c(t)}\|^2_{L^2} =6 c^{3/2}(t)$ in the last step. On the other hand, we may compute
\begin{align}
    \partial_t \|\phi_{c(t)}\|^2_{L^2} =&6\partial_t c^{3/2}(t)=9 c_t(t)c^{1/2}(t)=-2c_0^{3/2}\alpha^{-4}(t)\alpha_t(t).\label{eqn:solitoncompute}
\end{align}
Combining \eqref{eqn:L2compute} and \eqref{eqn:solitoncompute} and estimating $|\alpha_t(t)+\tfrac{2}{3}\alpha(t) \epsilon f(\gamma t)|$ via \eqref{eqn:modulationestimproved} leads to
the estimate
\begin{align*}
   |\partial_t \|\overline{v}\|^2_{L^2}|\!=\! |\partial_t \|u\|^2_{L^2}\!-\!\partial_t\|\phi_{c(t)}\|^2_{L^2}|
   \!\leq\! \tilde{C}_1 \Big(\epsilon |f(\gamma t)|\|\overline{v}\|_{L^2}^2\!+\!  \epsilon|f(\gamma t)| \|v\|_{L^2_{w_\infty}}\!+\!\|v\|^2_{L^2_{w_\infty}}\Big),
\end{align*}
for some $\tilde{C}_1>0$. 
\end{proof}

%Building on this observation, we establish control of the $L^2$-norm of $\overline{v}$ in the following lemma. Here, we recall that $\delta_6$ is the constant introduced in \Cref{lem:estimates}, and $w_\infty$ is the asymptotic weight introduced in \Cref{lem:weightcondition}.
\begin{lemma}\label{lem:L^2}
Assuming \hyperlink{hyp:initial}{S1}, \hyperlink{hyp:alphabound}{S2}, and the choice \eqref{eqn:wmin} for \hyperlink{hyp:wmin}{S3},
%Assuming \hyperlink{hyp:initial}{S1} - \hyperlink{hyp:wmin}{S3}, 
there exists a constant $C_{12}>0$ so that the following holds true for each $\epsilon,\gamma>0$. If \hyperlink{hyp:exponential}{C4} is satisfied, \hyperlink{hyp:Tproperties2}{C5} holds for some $T>0$, and $\epsilon/\gamma\leq E_{\mathrm{max}}$, then the bound
\begin{align*}e^{\gamma t}\|{v}(t)\|_{L^2_{w_\infty}}\leq\delta_{6}a^{1/2}_\infty , \quad 0\leq t\leq T,\end{align*} implies
\begin{align*} \sup_{0\leq t\leq T}\| \overline{v}(t)\|_{L^2}^2\leq & \|\overline{v}_0\|^2_{L^2}\!+\!C_{12}\Big(\frac{\epsilon}{\gamma}\sup_{0\leq t\leq T}e^{\gamma t}\|{v}(t)\|_{H^1_{w_\infty}}\!+\!\gamma^{-1}\sup_{0\leq t\leq T}e^{2\gamma t}\|{v}(t)\|_{H^1_{w_\infty}}^2\Big).
\end{align*}
\end{lemma}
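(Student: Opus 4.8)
The plan is to integrate the pointwise derivative bound \eqref{eqn:below} of \Cref{lem:L2derivative} and close the resulting inequality with Grönwall. First I would observe that the standing hypothesis $e^{\gamma t}\|v(t)\|_{L^2_{w_\infty}}\le\delta_6 a_\infty^{1/2}$ forces $\|v(t)\|_{L^2_{w_\infty}}\le\delta_6 a_\infty^{1/2}$ for \emph{every} $t\in[0,T]$ (because $\gamma t\ge 0$), so \Cref{lem:L2derivative} applies at each such $t$; combined with $\overline{v}\in C([0,T],H^2)$ this makes $t\mapsto\|\overline{v}(t)\|_{L^2}^2$ absolutely continuous with
\[\big|\partial_t\|\overline{v}(t)\|_{L^2}^2\big|\le C_{11}\Big(\epsilon|f(\gamma t)|\,\|\overline{v}(t)\|_{L^2}^2+\epsilon|f(\gamma t)|\,\|v(t)\|_{L^2_{w_\infty}}+\|v(t)\|_{L^2_{w_\infty}}^2\Big).\]

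Writing $V=\sup_{0\le t\le T}e^{\gamma t}\|v(t)\|_{H^1_{w_\infty}}$, the key observation is that the two source terms become integrable once one pairs the exponential bound $|f(\gamma s)|\le e^{-\gamma s}$ from \hyperlink{hyp:exponential}{C4} with $\|v(s)\|_{L^2_{w_\infty}}\le\|v(s)\|_{H^1_{w_\infty}}\le e^{-\gamma s}V$:
\[\int_0^T\epsilon|f(\gamma s)|\,\|v(s)\|_{L^2_{w_\infty}}\,\d s\le\epsilon V\!\int_0^\infty\! e^{-2\gamma s}\,\d s=\frac{\epsilon V}{2\gamma},\qquad\int_0^T\|v(s)\|_{L^2_{w_\infty}}^2\,\d s\le\frac{1}{2\gamma}\sup_{0\le t\le T}e^{2\gamma t}\|v(t)\|_{H^1_{w_\infty}}^2.\]
These produce exactly the $\tfrac{\epsilon}{\gamma}$ and $\gamma^{-1}$ prefactors in the claimed estimate. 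The remaining term carries the unknown $\|\overline{v}\|_{L^2}^2$, but its coefficient is integrable: $\int_0^T C_{11}\epsilon|f(\gamma s)|\,\d s\le C_{11}\epsilon/\gamma\le C_{11}E_{\mathrm{max}}$ by \hyperlink{hyp:exponential}{C4} and $\epsilon/\gamma\le E_{\mathrm{max}}$. Grönwall's inequality applied to $y(t)=\|\overline{v}(t)\|_{L^2}^2$ then gives $\sup_{0\le t\le T}\|\overline{v}(t)\|_{L^2}^2\le e^{C_{11}E_{\mathrm{max}}}\big(\|\overline{v}_0\|_{L^2}^2+\tfrac{C_{11}}{2}\tfrac{\epsilon}{\gamma}V+\tfrac{C_{11}}{2\gamma}\sup_{0\le t\le T}e^{2\gamma t}\|v(t)\|_{H^1_{w_\infty}}^2\big)$, and one collects constants into $C_{12}$.

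To obtain the sharper statement with unit coefficient on $\|\overline{v}_0\|_{L^2}^2$, I would not Grönwall the $2\epsilon f\|\overline{v}\|_{L^2}^2$ contribution but cancel it exactly. From the identity inside the proof of \Cref{lem:L2derivative}, $\partial_t\|\overline{v}\|_{L^2}^2=2\epsilon f(\gamma t)\big(\|\phi_{c(t)}\|_{L^2}^2+\|\overline{v}\|_{L^2}^2\big)-\partial_t\|\phi_{c(t)}\|_{L^2}^2$, so with the integrating factor $g(t)=\exp\!\big(-2\tfrac{\epsilon}{\gamma}\int_0^{\gamma t}f(\tau)\,\d\tau\big)$ (which satisfies $g(0)=1$ and $g(t)^{\pm1}\le e^{2E_{\mathrm{max}}}$ by \hyperlink{hyp:exponential}{C4}), and using $\|\phi_c\|_{L^2}^2=6c^{3/2}$, $c=c_0\alpha^{-2}$, $c_t=-2c_0\alpha^{-3}\alpha_t$, one gets
\[\partial_t\big(g\|\overline{v}\|_{L^2}^2\big)=18c_0^{3/2}\,g\,\alpha^{-4}\big(\alpha_t+\tfrac23\alpha\,\epsilon f(\gamma t)\big).\]
Since $|\alpha_t+\tfrac23\alpha\epsilon f(\gamma t)|$ is controlled by \eqref{eqn:modulationestimproved} with weight $(w_\infty,w_\infty)$ by $C\big(\epsilon|f(\gamma t)|\,\|v(t)\|_{L^2_{w_\infty}}+\|v(t)\|_{L^2_{w_\infty}}^2\big)$, integrating from $0$, dividing by $g(t)$, and reusing the same two integral estimates gives $\|\overline{v}(t)\|_{L^2}^2\le g(t)^{-1}\|\overline{v}_0\|_{L^2}^2+C_{12}\big(\tfrac{\epsilon}{\gamma}V+\gamma^{-1}\sup_{0\le t\le T}e^{2\gamma t}\|v(t)\|_{H^1_{w_\infty}}^2\big)$; the factor $g(t)^{-1}$ is $\le 1$ whenever $\int_0^{\gamma t}f\le0$ and is $1+O(\epsilon/\gamma)$ in general.

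The only real work is the bookkeeping in the last two steps: every power of $\gamma^{-1}$ gained comes from integrating $e^{-2\gamma s}$, which in turn requires coupling the exponential decay $|f(\gamma s)|\le e^{-\gamma s}$ of the forcing (hypothesis \hyperlink{hyp:exponential}{C4}) with the a priori weighted exponential decay $e^{\gamma s}\|v(s)\|_{H^1_{w_\infty}}\le V$. Without \hyperlink{hyp:exponential}{C4} this closing step fails — which is precisely why the unweighted $H^1$-control, and hence the full theorem, is stated under exponential decay of $f$. No estimate beyond \Cref{lem:L2derivative} (and \eqref{eqn:modulationestimproved} for the sharp constant) is needed.
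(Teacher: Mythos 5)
Your first route --- the pointwise bound from \Cref{lem:L2derivative}, pairing $|f(\gamma s)|\le e^{-\gamma s}$ from \hyperlink{hyp:exponential}{C4} with $\|v(s)\|_{L^2_{w_\infty}}\le e^{-\gamma s}V$ so that both source terms integrate to the $\epsilon/\gamma$ and $\gamma^{-1}$ prefactors, followed by Gr\"onwall with integrable coefficient $C_{11}\epsilon|f(\gamma s)|$ --- is exactly the paper's proof. Your extra integrating-factor refinement to recover the unit coefficient on $\|\overline{v}_0\|_{L^2}^2$ goes beyond the paper, whose own Gr\"onwall step in fact produces the prefactor $e^{C_{11}\epsilon/\gamma}$ on that term, so your version is, if anything, slightly more careful on that point.
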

\begin{proof}
Writing 
\[\epsilon_1=\sup_{0\leq t\leq T}e^{\gamma t}\|{v}\|_{L^2_{w_\infty}},\]
we have via \Cref{lem:L2derivative}
\begin{align*}
    |\partial_t \|\overline{v}\|^2_{L^2}|\leq C_{11} \big(\epsilon e^{-\gamma t}\|\overline{v}\|_{L^2}^2+(\epsilon \epsilon_1  + \epsilon_1^2)e^{-2\gamma t}\big),
\end{align*}
and applying Gr\"{o}nwall's inequality yields 
\begin{equation*}
    \|\overline{v}\|^2_{L^2}\leq \Big(\|\overline{v}_0\|^2_{L^2}+\frac{C_{11}}{2\gamma}(\epsilon\epsilon_1+\epsilon_1^2)\Big)e^{C_{11}\epsilon/\gamma}.    \qedhere
\end{equation*}
\end{proof}
Control in $H^1$ is established using the well-known fact that the soliton $\phi_c$ is a critical point of the functional $\mathcal{E}_{c}[u]=\tfrac{1}{2}c\|u\|_{L^2}^2+\mathcal{H}[u]$. This is reflected in the equality
\begin{align*}
    \mathcal{E}_c[\phi_c+z]-\mathcal{E}_c[\phi_c]=\tfrac{1}{2}c\|z\|_{L^2}^2+\tfrac{1}{2}\|\partial_x z\|_{L^2}^2+\int_{\R}-\phi_c z^2 +\tfrac{1}{3}z^3 \d x,
\end{align*}
which is $O(z^2)$. Substituting our decomposition \eqref{eqn:unscaled} into
\[\mathcal{J}(t)=\mathcal{E}_{c(t)}[u(t)]-\mathcal{E}_{c(t)}[\phi_{c(t)}],\]
we arrive at 
\begin{align}\label{eqn:Jformula}
    \mathcal{J}(t)
    =&\mathcal{E}_{c(t)}[\phi_{c(t)}+\overline{v}(t)]-\mathcal{E}_{c(t)}[\phi_{c(t)}]\nonumber\\
    =&\tfrac{1}{2}c(t)\|\overline{v}(t)\|_{L^2}^2+\tfrac{1}{2}\|\partial_x \overline{v}(t)\|_{L^2}^2+\int_{\R}-\phi_{c(t)} \overline{v}^2(t) +\tfrac{1}{3}\overline{v}^3(t) \d x,
\end{align}
which generalizes \eqref{eqn:pyth} in the sense that the right-hand side is $O(\overline{v}^2)$. In fact, we note that $\|\overline{v}(t)\|_{H^1}^2$ can be bounded in terms of $|\mathcal{J}(t)|$ and vice versa. 
\begin{lemma}
Assuming \hyperlink{hyp:initial}{S1}, \hyperlink{hyp:alphabound}{S2}, and the choice \eqref{eqn:wmin} for \hyperlink{hyp:wmin}{S3},
    %Assuming \hyperlink{hyp:initial}{S1} - \hyperlink{hyp:wmin}{S3}, 
    there exists a constant $C_{13}>0$ so that the following holds true for each $\epsilon,\gamma>0$. If \hyperlink{hyp:Tproperties2}{C5} holds for some $T>0$ and $t\in[0,T]$, then
    \begin{align} \|\overline{v}(t)\|_{H^1}^2\leq
    C_{13}\Big(|\mathcal{J}(t)|+\|{v}(t)\|_{L_{w_\infty}^2}^2+\|\overline{v}(t)\|_{H^1}\|\overline{v}(t)\|_{L^2}^2\Big),\label{eqn:Jest}\end{align}
    and
    \begin{align}|\mathcal{J}(t)|\leq  C_{13}\Big(\|\overline{v}(t)\|_{H^1}^2+\|{v}(t)\|_{L_{w_\infty}^2}^2 +\|\overline{v}(t)\|_{H^1}\|\overline{v}(t)\|_{L^2}^2\Big)\label{eqn:forJ0}.\end{align}
\end{lemma}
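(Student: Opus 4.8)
The plan is to extract both inequalities directly from the explicit formula \eqref{eqn:Jformula}, after controlling the two nonlinear contributions on its right-hand side: the localized quadratic term $\int_\R \phi_{c(t)}\overline{v}^2(t)\,\d x$ and the cubic term $\int_\R \overline{v}^3(t)\,\d x$. Throughout I use that \hyperlink{hyp:Tproperties2}{C5} pins $\alpha(t)$ to $[\alpha_{\mathrm{min}},\alpha_{\mathrm{max}}]$, so that $c(t)=c_0\alpha^{-2}(t)$ stays in a fixed compact subinterval of $(0,\infty)$; in particular $\tfrac12\min\{1,c(t)\}\|\overline{v}(t)\|_{H^1}^2\le \tfrac12\|\partial_x\overline{v}(t)\|_{L^2}^2+\tfrac12 c(t)\|\overline{v}(t)\|_{L^2}^2\le \tfrac12\max\{1,c(t)\}\|\overline{v}(t)\|_{H^1}^2$, with both constants positive and depending only on \hyperlink{hyp:initial}{S1}--\hyperlink{hyp:alphabound}{S2}.

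For the cubic term I would simply use the Sobolev embedding $H^1(\R)\hookrightarrow L^\infty(\R)$ to bound $\big|\int_\R\overline{v}^3(t)\,\d x\big|\le\|\overline{v}(t)\|_{L^\infty}\|\overline{v}(t)\|_{L^2}^2\le C\|\overline{v}(t)\|_{H^1}\|\overline{v}(t)\|_{L^2}^2$, which reproduces the mixed term in both claims. The localized quadratic term is the only step that needs care, and I would handle it by passing to the rescaled perturbation. By item 1 of \Cref{lem:scaling} we have $\overline{v}(t,\alpha(t)y)=\alpha^{-2}(t)v(t,y)$, and the self-similarity $\phi_c(x)=c\phi_1(\sqrt{c}x)$ combined with $\sqrt{c(t)}\,\alpha(t)=\sqrt{c_0}$ gives $\phi_{c(t)}(\alpha(t)y)=\alpha^{-2}(t)\phi_{c_0}(y)$. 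Substituting $x=\alpha(t)y$ then yields
\[\int_\R \phi_{c(t)}(x)\,\overline{v}^2(t,x)\,\d x=\alpha^{-5}(t)\int_\R \phi_{c_0}(y)\,v^2(t,y)\,\d y\le \alpha_{\mathrm{min}}^{-5}\,\big\|e^{-2w_\infty\,\cdot}\phi_{c_0}\big\|_{L^\infty}\,\|v(t)\|_{L^2_{w_\infty}}^2,\]
where $\|e^{-2w_\infty\,\cdot}\phi_{c_0}\|_{L^\infty}<\infty$ since $\phi_{c_0}$ decays like $e^{-\sqrt{c_0}|y|}$ at $\pm\infty$ while $2w_\infty<\sqrt{c_0}$; this last inequality follows from $w_\infty<w<\sqrt{c_*}/3$ together with the fact that $c_0$ lies in the fixed neighbourhood of $c_*$ on which the decomposition of \Cref{lem:implicit} is available, and is one of the places where the stricter weight hypothesis is exploited.

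With these two estimates in place, \eqref{eqn:forJ0} follows at once by applying the triangle inequality to \eqref{eqn:Jformula} and using $c(t)\|\overline{v}\|_{L^2}^2+\|\partial_x\overline{v}\|_{L^2}^2\le C\|\overline{v}\|_{H^1}^2$. For \eqref{eqn:Jest} I would rewrite \eqref{eqn:Jformula} as
\[\tfrac12\|\partial_x\overline{v}(t)\|_{L^2}^2+\tfrac12 c(t)\|\overline{v}(t)\|_{L^2}^2=\mathcal{J}(t)+\int_\R\phi_{c(t)}\overline{v}^2(t)\,\d x-\tfrac13\int_\R\overline{v}^3(t)\,\d x,\]
bound the left-hand side from below by $\tfrac12\min\{1,c(t)\}\|\overline{v}(t)\|_{H^1}^2$, bound the right-hand side from above by the displayed estimates, and divide by the positive constant $\tfrac12\min\{1,c(t)\}$. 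The only genuine obstacle is the treatment of the $\phi_{c(t)}\overline{v}^2$ term --- specifically, executing the rescaling correctly and checking that the weight $e^{2w_\infty\,\cdot}$ dominates $\phi_{c_0}$, i.e. that $2w_\infty<\sqrt{c_0}$; everything else is routine.
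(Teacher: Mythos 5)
Your proposal is correct and follows essentially the same route as the paper: the same rescaling identity $\int_\R\phi_{c(t)}\overline{v}^2\,\d x=\alpha^{-5}(t)\int_\R\phi_{c_0}v^2\,\d x$ bounded by $\alpha_{\mathrm{min}}^{-5}\|e^{-2w_\infty x}\phi_{c_0}\|_{L^\infty}\|v(t)\|_{L^2_{w_\infty}}^2$, the same Sobolev-embedding bound $\bigl|\int_\R\overline{v}^3\,\d x\bigr|\leq\sqrt{2}\,\|\overline{v}\|_{H^1}\|\overline{v}\|_{L^2}^2$ for the cubic term, and the same rearrangement of \eqref{eqn:Jformula} in the two directions. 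Your explicit check that $2w_\infty<\sqrt{c_0}$ (using $w<\sqrt{c_*}/3$) is a detail the paper leaves implicit, and it is a correct and worthwhile observation.
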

% On the one hand, it is straightforward to see that
% \begin{align*}
%     |\mathcal{J}(t)|\leq (\tfrac{1}{2}c(t)+\tfrac{1}{2}+\|\phi_{c(t)}\|_{L^\infty}+\tfrac{\sqrt{2}}{3}\|\overline{v}(t)\|_{H^1})\|\overline{v}(t)\|_{H^1}^2.
% \end{align*}
\begin{proof}
Estimating 
\begin{align}\left|\int_{\R}\phi_{c(t)} \overline{v}^2(t)\d x\right|=\left|\alpha^{-5}(t)\int_{\R}\phi_{c_0} v^2(t)\d x\right|\leq & \alpha^{-5}_{\mathrm{min}}\|e^{-2w_\infty x}\phi_{c_0}\|_{L^\infty}\|{v}(t)\|_{L_{w_\infty}^2}^2 \label{eqn:forJ}
\end{align}
in \eqref{eqn:Jformula} yields 
\begin{align*}
 |\mathcal{J}(t)|\leq &\tfrac{1}{2} \max\{c_0 \alpha_{\mathrm{min}}^{-2},1\}\|\overline{v}(t)\|_{H^1}^2+\alpha^{-5}_{\mathrm{min}}\|e^{-2w_\infty x}\phi_{c_0}\|_{L^\infty}\|{v}(t)\|_{L_{w_\infty}^2}^2 \\
 &+\tfrac{\sqrt{2}}{3}\|\overline{v}(t)\|_{H^1}\|\overline{v}(t)\|_{L^2}^2,
\end{align*}
which yields \eqref{eqn:forJ0}. On the other hand, rewriting \eqref{eqn:Jformula} as
\begin{align*}
     \tfrac{1}{2}c(t)\|\overline{v}(t)\|_{L^2}^2+\tfrac{1}{2}\|\partial_x \overline{v}(t)\|_{L^2}^2
    =&-\mathcal{J}(t)+\int_{\R}\phi_{c(t)} \overline{v}^2(t)-\tfrac{1}{3}\overline{v}^3(t) \d x,
\end{align*}
and applying \eqref{eqn:forJ} provides
\begin{align*}
    k\|\overline{v}(t)\|_{H^1}^2
    \leq &|\mathcal{J}(t)|+\alpha^{-5}_{\mathrm{min}}\|e^{-2w_\infty x}\phi_{c_0}\|_{L^\infty}\|{v}(t)\|_{L_{w_\infty}^2}^2+\tfrac{\sqrt{2}}{3}\|\overline{v}(t)\|_{H^1}\|\overline{v}(t)\|_{L^2}^2,
\end{align*}
in case $k\leq \min\{\tfrac{1}{2},\tfrac{1}{2}c_0\alpha^{-2}_{\mathrm{min}}\}$, which establishes \eqref{eqn:Jest}.
\end{proof}
We now establish an estimate of $\partial_t \mathcal{J}(t)$, which we use to control $\|\overline{v}\|_{H^1}$ via a Gr\"{o}nwall argument, analogous to the proof of \Cref{lem:L^2}. Note also in \eqref{eqn:below2} below, that the term $\|\overline{v}(t)\|_{H^1}^2$ carries a factor $\epsilon|f(\gamma t)|$.
\begin{lemma}\label{lem:Jderivative}
Assuming \hyperlink{hyp:initial}{S1}, \hyperlink{hyp:alphabound}{S2}, and the choice \eqref{eqn:wmin} for \hyperlink{hyp:wmin}{S3}, there exists a constant $C_{14}>0$ so that the following holds true for each $\epsilon,\gamma>0$. If \hyperlink{hyp:Tproperties2}{C5} holds for some $T>0$ and $t\in[0,T]$, then the bounds
\begin{align*}\|{v}(t)\|_{L^2_{w_\infty}}\leq \min\{1,\delta_{6}a^{1/2}_\infty\} \quad \text{and} \quad \|\overline{v}(t)\|_{L^2}\leq  1,\end{align*} 
imply
\begin{align}|\partial_t \mathcal{J}(t)|\leq C_{14}\Big(\epsilon |f(\gamma t)|\| \overline{v}(t)\|_{H^1}^2+\epsilon |f(\gamma t)|\|v(t)\|_{L^2_{w_\infty}}+ C_{14}\|v(t)\|_{L^2_{w_\infty}}^2\Big).\label{eqn:below2}\end{align}
\end{lemma}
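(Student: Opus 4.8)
The plan is to compute $\partial_t\mathcal{J}(t)$ exactly and then bound it by the two manifestly small quantities on the right of \eqref{eqn:below2}. Write $u=u(t)$, $\overline v=\overline v(t)$, $c=c(t)$, and recall $\mathcal{E}_c[u]=\mathcal{H}[u]+\tfrac12 c\,\mathcal{N}[u]$ with variational derivative $D\mathcal{E}_c[u]=cu-\partial_x^2u-u^2$. Differentiating $\mathcal{J}(t)=\mathcal{E}_{c(t)}[u(t)]-\mathcal{E}_{c(t)}[\phi_{c(t)}]$ and using the identities $\partial_t\mathcal{N}[u]=2\epsilon f(\gamma t)\mathcal{N}[u]$ and $\partial_t\mathcal{H}[u]=\epsilon f(\gamma t)\bigl(3\mathcal{H}[u]-\tfrac12\|\partial_x u\|_{L^2}^2\bigr)$ derived in the proof of \Cref{lem:regularity} (these encode that the unforced flow conserves $\mathcal{N}$ and $\mathcal{H}$), together with the fact that $\phi_c$ is a critical point of $\mathcal{E}_c$ so that $\tfrac{d}{dc}\mathcal{E}_c[\phi_c]=\tfrac12\mathcal{N}[\phi_c]$, the $\tfrac12 c_t\mathcal{N}$ contributions combine via Pythagoras \eqref{eqn:pyth} into
\[
\partial_t\mathcal{J}(t)=\epsilon f(\gamma t)\,\big\langle D\mathcal{E}_{c(t)}[u(t)],\,u(t)\big\rangle+\tfrac12 c_t(t)\,\|\overline v(t)\|_{L^2}^2 .
\]
All manipulations are justified by the regularity asserted in \hyperlink{hyp:Tproperties2}{C5}, exactly as in \cite[Appendix A]{pegoweinstein}.

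The crux is that the first term, though naively $O(1)$, is small because $D\mathcal{E}_c[\phi_c]=0$. Substituting $u=\phi_c+\overline v$, using $D\mathcal{E}_c[\phi_c]=0$ and the profile identity $(c-\partial_x^2-2\phi_c)\phi_c=-\phi_c^2$ (a consequence of $c\phi_c-\partial_x^2\phi_c-\phi_c^2=0$), and self-adjointness, I obtain the exact expansion
\[
\big\langle D\mathcal{E}_c[u],\,u\big\rangle=-\langle\overline v,\phi_c^2\rangle+\big\langle(c-\partial_x^2-2\phi_c)\overline v,\,\overline v\big\rangle-\langle\phi_c,\overline v^2\rangle-\langle\overline v,\overline v^2\rangle .
\]
The quadratic form $\langle(c-\partial_x^2-2\phi_c)\overline v,\overline v\rangle=\|\partial_x\overline v\|_{L^2}^2+c\|\overline v\|_{L^2}^2-2\int_\R\phi_c\overline v^2$ is $O(\|\overline v\|_{H^1}^2)$ since $c\le c_{\max}$ and $\|\phi_c\|_{L^\infty}$ is bounded; $|\langle\overline v,\overline v^2\rangle|\le\sqrt2\,\|\overline v\|_{H^1}\|\overline v\|_{L^2}^2\le\sqrt2\,\|\overline v\|_{H^1}^2$ by $H^1\hookrightarrow L^\infty$ and $\|\overline v\|_{L^2}\le1$; and the remaining two terms are handled by passing to the rescaled frame exactly as in \eqref{eqn:forJ}, giving $|\langle\phi_c,\overline v^2\rangle|=\alpha^{-5}|\langle\phi_{c_0},v^2\rangle|\le\alpha_{\min}^{-5}\|e^{-2w_\infty x}\phi_{c_0}\|_{L^\infty}\|v\|_{L^2_{w_\infty}}^2$ and $|\langle\overline v,\phi_c^2\rangle|=\alpha^{-5}|\langle v,\phi_{c_0}^2\rangle|\le\alpha_{\min}^{-5}\|e^{-w_\infty x}\phi_{c_0}^2\|_{L^2}\|v\|_{L^2_{w_\infty}}$, the last constant being finite for the same reason as the one appearing in \eqref{eqn:forJ}. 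Multiplying through by $\epsilon|f(\gamma t)|$ and using $\|v\|_{L^2_{w_\infty}}\le1$ to absorb the square shows the first term of $\partial_t\mathcal{J}$ is bounded by $C\bigl(\epsilon|f(\gamma t)|\|\overline v\|_{H^1}^2+\epsilon|f(\gamma t)|\|v\|_{L^2_{w_\infty}}\bigr)$.

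For the second term I use $c_t=-2c_0\alpha^{-3}\alpha_t$ together with \eqref{eqn:modulationestimproved} of \Cref{lem:estimates} applied with the admissible weight $\mathbf b=(w_\infty,w_\infty)$ — the smallness hypothesis on $\|v(t)\|_{L^2_{w_\infty}}$ is precisely what that lemma requires — to get $|\alpha_t|\le C\bigl(\epsilon|f(\gamma t)|(1+\|v\|_{L^2_{w_\infty}})+\|v\|_{L^2_{w_\infty}}^2\bigr)\le C\bigl(\epsilon|f(\gamma t)|+\|v\|_{L^2_{w_\infty}}^2\bigr)$. Since $\alpha$ is bounded below, $|c_t|\,\|\overline v\|_{L^2}^2\le C\bigl(\epsilon|f(\gamma t)|+\|v\|_{L^2_{w_\infty}}^2\bigr)\|\overline v\|_{L^2}^2\le C\epsilon|f(\gamma t)|\|\overline v\|_{H^1}^2+C\|v\|_{L^2_{w_\infty}}^2$, using $\|\overline v\|_{L^2}\le\|\overline v\|_{H^1}$ and $\|\overline v\|_{L^2}\le1$. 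Adding the two contributions yields \eqref{eqn:below2}.

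I expect the only genuine subtlety, beyond bookkeeping, to be the identification carried out in the first two paragraphs: one must recognize that the naively $O(1)$ forcing contribution $\epsilon f\langle D\mathcal{E}_c[u],u\rangle$ actually sits at the level $\epsilon|f|\bigl(\|\overline v\|_{H^1}^2+\|v\|_{L^2_{w_\infty}}\bigr)$. This rests both on the criticality of $\phi_c$ (which removes the $O(1)$ piece) and on the availability of the exponentially weighted estimate to control the surviving linear-in-$\overline v$ term $\langle\overline v,\phi_c^2\rangle$; an unweighted bound would only give $O(\epsilon|f|\|\overline v\|_{L^2})$, too weak for the Grönwall argument on $\|\overline v\|_{H^1}$ that follows.
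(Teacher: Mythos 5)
Your proof is correct, and it reorganizes the computation in a way that is genuinely tidier than the paper's. The paper differentiates $\mathcal{H}[u]$ and $\mathcal{H}[\phi_{c(t)}]$ separately, together with $\|u\|_{L^2}^2$ and $\|\phi_{c(t)}\|_{L^2}^2$ (reusing \Cref{lem:L2derivative} for the latter pair), and then must invoke the \emph{refined} modulation estimate \eqref{eqn:modulationestimproved} to see that the two $O(1)$ contributions $-6\epsilon f(\gamma t)c^{5/2}$ and $9c_0^{5/2}\alpha^{-6}\alpha_t$ cancel to the required order. Your route packages that cancellation structurally: the identity $\tfrac{d}{dc}\mathcal{E}_c[\phi_c]=\tfrac12\mathcal{N}[\phi_c]$ (criticality of $\phi_c$) combined with Pythagoras \eqref{eqn:pyth} collapses all $c_t$-dependence into the single term $\tfrac12 c_t\|\overline v\|_{L^2}^2$, which is already quadratic in the perturbation, so only the crude bound \eqref{eqn:modulationestimate} on $\alpha_t$ is needed; and the forcing contribution appears as $\epsilon f\langle D\mathcal{E}_c[u],u\rangle$, whose expansion about the critical point exhibits the smallness directly. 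The term-by-term estimates that follow (the weighted bound on the surviving linear term $\langle\overline v,\phi_c^2\rangle$ via rescaling to $\langle v,\phi_{c_0}^2\rangle$, the weighted bound on $\langle\phi_c,\overline v^2\rangle$ as in \eqref{eqn:forJ}, the Sobolev bound on the cubic term using $\|\overline v\|_{L^2}\le 1$) coincide with the paper's. Two cosmetic points: you cite \eqref{eqn:modulationestimproved} but then use the consequence \eqref{eqn:modulationestimate}, either of which suffices here; and your closing remark is exactly right --- the unweighted pairing $\langle\overline v,\phi_c^2\rangle=O(\|\overline v\|_{L^2})$ would not close the Gr\"onwall argument, so the exponentially weighted control of $v$ is essential at precisely this point.
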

\begin{proof}
    We once more abbreviate $u=u(t)$, $v=v(t)$, and $\overline{v}=\overline{v}(t)$, and compute that 
\begin{align*}
    \partial_t \mathcal{H}[u]=&\partial_t\int_{\R}\tfrac{1}{2}(u_x)^2-\tfrac{1}{3}u^3 \d x=\int_{\R}u_x u_{xt}-u^2u_t \d x=-\int_{\R}(u_{xx} +u^2)u_t \d x\\
    \stackrel{\eqref{eqn:gammaversion}}{=}&\big\langle u_{xx} +u^2,u_{xxx}+(u^2)_x-\epsilon f(\gamma t)  u\big\rangle =\epsilon f(\gamma t) \|\partial_x u\|^2 -\epsilon f(\gamma t)  \int_{\R} u^3 \d x.
\end{align*}
Substituting \eqref{eqn:unscaled}, we have
\begin{align*}
    \partial_t \mathcal{H}[u]=&\epsilon f(\gamma t) \|\partial_x u\|_{L^2}^2 -\epsilon f(\gamma t)  \int u^3 \d x = \epsilon f(\gamma t) \big(\|\partial_x \phi_{c(t)} \|_{L^2}^2-\int_\R \phi^3_{c(t)} \d x \big)\\
    &+\epsilon f(\gamma t) \big(2\langle \partial_x\phi_{c(t)},\partial_x \overline{v}\rangle-3\langle \phi^2_{c(t)},\overline{v}\rangle \big)\\
    &+\epsilon f(\gamma t)\big(\|\partial_x \overline{v}\|_{L^2}^2-\int_{\R}3\phi_{c(t)}\overline{v}^2+\overline{v}^3 \d x\big),
\end{align*}
where the leading-order term evaluates to
\[\|\partial_x \phi_{c(t)} \|_{L^2}^2-\int_\R \phi^3_{c(t)} \d x=-6c^{5/2}(t).\]
On the other hand, 
\begin{align*}
    \partial_t \mathcal{H}[\phi_{c(t)}]=-\tfrac{9}{5}\partial_t c^{5/2}(t)=-\tfrac{9}{2} c_t(t)c^{3/2}(t)=9c_0^{5/2}\alpha^{-6}(t)\alpha_t(t)
\end{align*}
and we note via \eqref{eqn:modulation} that $\partial_t \mathcal{H}[u]=\partial_t \mathcal{H}[\phi_{c(t)}]$ in case $v=0$. Moreover,  estimating \[|\alpha_t(t)+\tfrac{2}{3}\alpha(t) \epsilon f(\gamma t)|\leq \tilde{C}_1 \big( \epsilon|f(\gamma t)| \|v\|_{L^2_{w_\infty}}+\|v\|^2_{L^2_{w_\infty}}\big)\] 
for some $\tilde{C}_1>0$ leads to
\begin{align*}
    | \partial_t \mathcal{H}[u]-\partial_t \mathcal{H}[\phi_{c(t)}]|\leq &
    \epsilon |f(\gamma t)| \big(2\langle \partial_x\phi_{c(t)},\partial_x \overline{v}\rangle-3\langle \phi^2_{c(t)},\overline{v}\rangle \big]\\
    &+\epsilon |f(\gamma t)|\big(\|\partial_x \overline{v}\|_{L^2}^2-\int_{\R}3\phi_{c(t)}\overline{v}^2+\overline{v}^3 \d x\big)\\
    &+\tilde{C}_1 \big( \epsilon|f(\gamma t)| \|v\|_{L^2_{w_\infty}}+\|v\|^2_{L^2_{w_\infty}}\big).
\end{align*}
Rescaling the terms that contain the soliton and introducing an exponential weight yields
\begin{align*}|2\langle \partial_x\phi_{c(t)},\partial_x \overline{v}\rangle-3\langle \phi^2_{c(t)},\overline{v}\rangle|\leq & \tilde{C}_2|2\langle \partial_x\phi_{c_0},\partial_x {v}\rangle-3\langle \phi^2_{c_0},{v}\rangle|\\
\leq & \tilde{C}_2\|\partial_x^2\phi_{c_0}+\phi_{c_0}^2\|_{L^2_{-w_\infty}}\|v\|_{L^2_{w_\infty}} \end{align*}
and
\begin{align*}
    \left|\int_{\R}3\phi_{c(t)}\overline{v}^2 \d x\right|\leq \tilde{C}_2\left|\int_{\R}3\phi_{c_0}{v}^2 \d x\right|\leq \tilde{C}_2\|e^{-2w_\infty x}\phi_{c_0}\|_{L^\infty}\|v\|_{L_{w_\infty}^2}^2
\end{align*}
for some constant $\tilde{C}_2>0$. The cubic term is controlled by estimating
\begin{align*}
\left|\int_{\R}\overline{v}^3 \d x\right|\leq \|\overline{v}\|_{L^\infty}\|\overline{v}\|_{L^2}^2\leq \sqrt{2}\|\overline{v}\|_{H^1}^2,
\end{align*}
where we have used %that
$\|\overline{v}\|_{L^2}\leq 1$.
% \begin{align*}
%     \tilde{C}_2 \epsilon |f(\gamma t)|[\|\partial_x^2\phi_{c_0}+\phi_{c_0}^2\|_{L^2_{-w_\infty}}\|v(t)\|_{L^2_{w_\infty}}+\|e^{-2w_\infty x}\phi_{c_0}\|_{L^\infty}\|v(t)\|_{L_{w_\infty}^2}^2]\\
%     &+\tilde{C}_2 \epsilon |f(\gamma t)|[1+\|\overline{v}(t)\|_{L^2}]\| \overline{v}(t)\|_{H^1}^2\\
%     &+\tilde{C}_1 \big[ \epsilon|f(\gamma t)| \|v(t)\|_{L^2_{w_\infty}}+\|v(t)\|^2_{L^2_{w_\infty}}\big],
% \end{align*}
Collecting our results so far yields
\begin{align*}
    \big| \partial_t \mathcal{H}[u]-\partial_t \mathcal{H}[\phi_{c(t)}]\big|\leq &
    (1+\sqrt{2})\epsilon |f(\gamma t)|\| \overline{v}\|_{H^1}^2+\tilde{C}_3 \epsilon |f(\gamma t)|\big(\|v\|_{L^2_{w_\infty}}+\|v\|_{L^2_{w_\infty}}^2\big)\\
    &+\tilde{C}_3\|v\|_{L^2_{w_\infty}}^2
\end{align*}
for some $\tilde{C}_3>0$. Computing
\[\partial_t\mathcal{J}(t)=c_t(t)\|\overline{v}\|_{L^2}^2+c(t)\partial_t\|\overline{v}\|_{L^2}^2+ \partial_t \mathcal{H}[u]-\partial_t \mathcal{H}[\phi_{c(t)}],\]
applying \Cref{lem:L2derivative}, and estimating $|c_t|$ via \Cref{lem:estimates} then gives the result.
% \begin{align*}
%     |\partial_t \mathcal{J}(t)|\leq (1+\sqrt{2})\epsilon e^{-\gamma t}\| \overline{v}\|_{H^1}^2+\tilde{C}_4 e^{-\gamma t}\big(\epsilon \epsilon_1(1+\epsilon_2+\epsilon_1+\epsilon\epsilon_1)+\epsilon\epsilon_2+\epsilon_1^2\big)
% \end{align*}
% for some $\tilde{C}_4>0$.
\end{proof}
We are now ready to state and prove the main result of this section, which establishes control over the $H^1$-norm of the perturbation.

\begin{proposition}\label{prop:H1}
Assuming \hyperlink{hyp:initial}{S1}, \hyperlink{hyp:alphabound}{S2}, and the choice \eqref{eqn:wmin} for \hyperlink{hyp:wmin}{S3},
%Assuming \hyperlink{hyp:initial}{S1} - \hyperlink{hyp:wmin}{S3}, 
there exists a constant $ C_{15}>0$ so that the following holds true for each $\epsilon,\gamma>0$. If \hyperlink{hyp:exponential}{C4} is satisfied, \hyperlink{hyp:Tproperties2}{C5} holds for some $T>0$, and $\epsilon/\gamma\leq E_{\mathrm{max}}$, then the bounds
\begin{align*}\sup_{0\leq t\leq T}e^{\gamma t}\|{v}(t)\|_{L^2_{w_\infty}}\leq &\min\{\delta_{6}w_\infty^{1/2},1\},\\
\sup_{0\leq t\leq T}\|\overline{v}(t)\|_{H^1}\leq & 1,
\end{align*} imply
\begin{align*} \sup_{0\leq t \leq T}\| {v}(t)\|_{H^1}^2\leq & C_{15}(\|\overline{v}_*\|^2_{H^1}+\|\overline{v}_*\|^2_{H^1_{w}})\\
&+C_{15}\Big(\frac{\epsilon}{\gamma}\sup_{0\leq t\leq T}e^{\gamma t}\|{v}(t)\|_{H^1_{w_\infty}}+\gamma^{-1}\sup_{0\leq t\leq T}e^{2\gamma t}\|{v}(t)\|_{H^1_{w_\infty}}^2\Big).
\end{align*}
\end{proposition}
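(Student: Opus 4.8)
The plan is to control $\|\overline{v}(t)\|_{H^1}$ through the modified energy $\mathcal{J}(t)=\mathcal{E}_{c(t)}[u(t)]-\mathcal{E}_{c(t)}[\phi_{c(t)}]$, exploiting the near-equivalence $\|\overline{v}(t)\|_{H^1}^2\sim|\mathcal{J}(t)|$ recorded in \eqref{eqn:Jest}--\eqref{eqn:forJ0} together with the differential inequality \eqref{eqn:below2} of \Cref{lem:Jderivative}, and then to transfer the resulting bound from $\overline{v}$ to $v$ via the rescaling identities of \Cref{lem:scaling}. The hypotheses $\sup_{[0,T]}e^{\gamma t}\|v(t)\|_{L^2_{w_\infty}}\le\min\{\delta_6w_\infty^{1/2},1\}$ and $\sup_{[0,T]}\|\overline{v}(t)\|_{H^1}\le 1$ supply the pointwise bounds $\|v(t)\|_{L^2_{w_\infty}}\le\min\{\delta_6w_\infty^{1/2},1\}$ and $\|\overline{v}(t)\|_{L^2}\le 1$ needed to invoke \Cref{lem:Jderivative}, \Cref{lem:L2derivative}, and, since $\epsilon/\gamma\le E_{\mathrm{max}}$, also \Cref{lem:L^2}.

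First I would integrate \eqref{eqn:below2} over $[0,t]$, giving $|\mathcal{J}(t)|\le|\mathcal{J}(0)|+C_{14}I_{\mathrm G}+C_{14}I_{\mathrm{err}}$, where $I_{\mathrm G}=\int_0^t\epsilon|f(\gamma s)|\|\overline{v}(s)\|_{H^1}^2\,\d s$ and $I_{\mathrm{err}}=\int_0^t\big(\epsilon|f(\gamma s)|\|v(s)\|_{L^2_{w_\infty}}+\|v(s)\|_{L^2_{w_\infty}}^2\big)\,\d s$. In $I_{\mathrm G}$ I would substitute \eqref{eqn:Jest}, using the a priori bound $\|\overline{v}\|_{H^1}\le 1$ to demote the cubic term $\|\overline{v}\|_{H^1}\|\overline{v}\|_{L^2}^2$ to $\|\overline{v}\|_{L^2}^2$ and then bounding $\sup_s\|\overline{v}(s)\|_{L^2}^2$ by \Cref{lem:L^2}; this leaves a term proportional to $\int_0^t\epsilon|f(\gamma s)||\mathcal{J}(s)|\,\d s$ plus lower-order contributions. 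Applying Gr\"onwall's inequality, the decisive point — and this is exactly where \hyperlink{hyp:exponential}{C4} enters — is that $\int_0^\infty\epsilon|f(\gamma s)|\,\d s\le\epsilon\int_0^\infty e^{-\gamma s}\,\d s=\epsilon/\gamma\le E_{\mathrm{max}}$, so the Gr\"onwall multiplier is bounded by $e^{CE_{\mathrm{max}}}$, uniformly in $\epsilon$ and $\gamma$.

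For $I_{\mathrm{err}}$ and for $\mathcal{J}(0)$ I would argue as follows. Using \hyperlink{hyp:exponential}{C4} in the form $|f(\gamma s)|\le e^{-\gamma s}$, together with $\|v(s)\|_{L^2_{w_\infty}}\le e^{-\gamma s}\sup_r e^{\gamma r}\|v(r)\|_{L^2_{w_\infty}}$, the a priori bound $\sup_r e^{\gamma r}\|v(r)\|_{L^2_{w_\infty}}\le 1$, and $\|v\|_{L^2_{w_\infty}}\le\|v\|_{H^1_{w_\infty}}$, the two integrals in $I_{\mathrm{err}}$ become $O\big(\tfrac{\epsilon}{\gamma}\sup_{[0,T]}e^{\gamma t}\|v(t)\|_{H^1_{w_\infty}}\big)$ and $O\big(\gamma^{-1}\sup_{[0,T]}e^{2\gamma t}\|v(t)\|_{H^1_{w_\infty}}^2\big)$, matching the two error terms in the statement (and the same estimate handles the lower-order contributions generated in the previous step). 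The initial value $\mathcal{J}(0)$ is estimated by \eqref{eqn:forJ0} at $t=0$, where $\alpha(0)=1$, $c(0)=c_0$, and $\overline{v}(0)=v(0)=\overline{v}_0$, combined with the continuous-dependence estimate $\|\overline{v}_0\|_{H^1}+\|\overline{v}_0\|_{H^1_w}\le C_2\big(\|\overline{v}_*\|_{H^1}+\|\overline{v}_*\|_{H^1_w}\big)$ from \Cref{lem:implicit}; this also bounds $\|\overline{v}_0\|_{L^2}^2$ and yields $|\mathcal{J}(0)|\le C\big(\|\overline{v}_*\|_{H^1}^2+\|\overline{v}_*\|_{H^1_w}^2\big)$.

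Putting these bounds together controls $\sup_{[0,T]}|\mathcal{J}(t)|$ by the right-hand side of the claimed estimate; feeding this back into \eqref{eqn:Jest} (again linearizing the cubic term and invoking \Cref{lem:L^2}) controls $\sup_{[0,T]}\|\overline{v}(t)\|_{H^1}^2$ by the same expression; and finally the unweighted ($b=0$) case of the rescaling identities in \Cref{lem:scaling}, namely $\|v(t)\|_{L^2}=\alpha^{3/2}(t)\|\overline{v}(t)\|_{L^2}$ and $\|\partial_xv(t)\|_{L^2}=\alpha^{5/2}(t)\|\partial_x\overline{v}(t)\|_{L^2}$ with $\alpha(t)\in[\alpha_{\mathrm{min}},\alpha_{\mathrm{max}}]$ by \hyperlink{hyp:Tproperties2}{C5}, upgrades this to the asserted bound on $\sup_{[0,T]}\|v(t)\|_{H^1}^2$. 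The main obstacle is the circular coupling between $\mathcal{J}$ and $\|\overline{v}\|_{H^1}^2$ combined with the genuinely cubic term $\|\overline{v}\|_{H^1}\|\overline{v}\|_{L^2}^2$ in \eqref{eqn:Jest}; this is broken by using the a priori $H^1$-smallness to demote the cubic term, \Cref{lem:L^2} to absorb the remaining $L^2$-norm, and — crucially — the exponential decay \hyperlink{hyp:exponential}{C4}, which is precisely what makes $\int_0^\infty\epsilon|f(\gamma s)|\,\d s$ finite and bounded by $E_{\mathrm{max}}$, thereby keeping the Gr\"onwall constant uniform in $\epsilon$ and $\gamma$.
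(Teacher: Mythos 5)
Your proposal is correct and follows essentially the same route as the paper: it controls $\|\overline{v}\|_{H^1}^2$ through the modified energy $\mathcal{J}(t)$ via \eqref{eqn:Jest}--\eqref{eqn:forJ0}, integrates the differential inequality \eqref{eqn:below2}, closes the loop with Gr\"onwall using $\int_0^\infty\epsilon|f(\gamma s)|\,\d s\leq\epsilon/\gamma\leq E_{\mathrm{max}}$, absorbs the $L^2$ and cubic terms via \Cref{lem:L^2} and the a priori bound $\|\overline{v}\|_{H^1}\leq 1$, and transfers to $v$ by \Cref{lem:scaling}. The only cosmetic difference is that you run Gr\"onwall on $|\mathcal{J}(t)|$ and then feed the result back into \eqref{eqn:Jest}, whereas the paper runs it directly on $\|\overline{v}(t)\|_{H^1}^2$; the two are equivalent given the two-sided comparison between $|\mathcal{J}|$ and $\|\overline{v}\|_{H^1}^2$.
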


\begin{proof}
We again write
\[\epsilon_1=\sup_{0\leq t\leq T}e^{\gamma t}\|{v}\|_{L^2_{w_\infty}} \quad \text{and} \quad \epsilon_2=\sup_{0\leq t\leq T}\|\overline{v}\|_{L^2}^2.\]
Applying \eqref{eqn:Jest}, we obtain
\begin{align*}
    \|\overline{v}\|_{H^1}^2
    \leq &C_{13}|\mathcal{J}(t)|+C_{13}\|{v}\|_{L_{w_\infty}^2}^2+C_{13}\|\overline{v}\|_{H^1}\|\overline{v}\|_{L^2}^2\\
    \leq & C_{13}|\mathcal{J}(0)|+C_{13}\int_0^t|\partial_t \mathcal{J}(s)|\d s+C_{13}(\epsilon_1^2+\epsilon_2),
 \end{align*}
which, using \Cref{lem:Jderivative}, leads to the bound
\begin{align*}
   \|\overline{v}\|_{H^1}^2 \leq & C_{13}|\mathcal{J}(0)|+C_{13}C_{14}\int_0^t\epsilon e^{-\gamma s}\| \overline{v}(s)\|_{H^1}^2\d s+C_{13}(\epsilon_1^2+\epsilon_2)\\
    &+C_{13}C_{14} \big(\frac{\epsilon}{\gamma}(\epsilon_1+\epsilon_2)+\frac{\epsilon_1^2}{\gamma}\big).
\end{align*}
% \begin{align*}
%     k\|\overline{v}\|_{H^1}^2\leq &|\mathcal{J}(t)|+\|e^{-2w_\infty x}\phi_{c(t)}\|_{L^\infty}\|\overline{v}\|_{L_{w_\infty}^2}^2+\tfrac{\sqrt{2}}{3}\|\overline{v}\|_{H^1}\|\overline{v}\|_{L^2}^2\\
%     \leq & |\tfrac{1}{2}c(t)\|\overline{v}\|_{L^2}^2+ \mathcal{H}[u]-\mathcal{H}[\phi_{c(t)}]|+\tilde{C}_4\epsilon_1^2+\tfrac{\sqrt{2}}{3}\|\overline{v}\|_{L^2}^2\\
% \leq & (\tfrac{1}{2}c_{\mathrm{max}}+\tfrac{\sqrt{2}}{3})B_{L^2}+\tilde{C}_4 \epsilon_1^2+ |\mathcal{H}[\phi_{c_0}+v_0]-\mathcal{H}[\phi_{c_0}]|\\
% &+\int_0^t| \partial_t \mathcal{H}[u(s)]-\partial_t \mathcal{H}[\phi_{c(s)}]|\d s\\
% \leq & (\tfrac{1}{2}c_{\mathrm{max}}+\tfrac{\sqrt{2}}{3})B_{L^2}+\tilde{C}_4 \epsilon_1^2+ |\mathcal{H}[\phi_{c_0}+v_0]-\mathcal{H}[\phi_{c_0}]|\\
% &+\tilde{C}_2\epsilon\int_0^t e^{-\gamma s}\| \overline{v}(s)\|_{H^1}^2\d s+\tilde{C}_3\frac{\epsilon}{\gamma}[\epsilon_1+(1+\epsilon)\epsilon_1^2].
% \end{align*}
Applying Gr\"{o}nwall's inequality finally yields
\begin{align*}
    \|\overline{v}\|_{H^1}^2\leq &\Big(C_{13}|\mathcal{J}(0)|+C_{13}(\epsilon_1^2+\epsilon_2)+C_{13}C_{14} \big(\frac{\epsilon}{\gamma}(\epsilon_1+\epsilon_2)+\frac{\epsilon_1^2}{\gamma}\big)\Big)e^{C_{13}C_{14}\frac{\epsilon}{\gamma}}.\end{align*}
The result now follows by controlling $\epsilon_2$ via \Cref{lem:L^2}, controlling $|\mathcal{J}(0)|$ via \eqref{eqn:forJ0}, and applying item 1 of \Cref{lem:scaling}.
% \begin{align*}
%      \mathcal{J}(0)=&\mathcal{E}_{c(0)}[u(0)]-\mathcal{E}_{c(0)}[\phi_{c(0)}]=\mathcal{E}_{c(0)}[\phi_{c(0)}+\overline{v}(0,\cdot+\xi_0)]-\mathcal{E}_{c(0)}[\phi_{c(0)}]\\
%      =&\tfrac{1}{2}c\|\overline{v}_0\|_{L^2}^2+\tfrac{1}{2}\|\partial_x \overline{v}_0\|_{L^2}^2+\int_{\R}-\phi_c \overline{v}^2(0) +\tfrac{1}{3}\overline{v}^3(0) \d x.
% \end{align*}
\end{proof}

\section{Proof of main result}\label{sec:proofmain}
% \begin{Lemma}\label{lem:alphabounds}
% Assume \hyperlink{hyp:exponential}{C4} and let $\epsilon_1>0$ be a constant for which
%     \begin{align*}\|{v}(t)\|_{H^1_{w_\infty}}\leq\epsilon_1 e^{-b\epsilon t}, \quad 0\leq t\leq T.\end{align*} Then
% \begin{align*}\left|\log({\alpha}(t))+ \tfrac{2}{3}\int_0^{\epsilon t}f(s) \d s\right|\lesssim\epsilon_1+\tfrac{\epsilon_1^2}{\epsilon}, \quad 0\leq t\leq T,
% \end{align*}
% and in particular there exist constants  $0<\alpha_{\mathrm{min}}<1 < \alpha_{\mathrm{max}}$ independent of $T$ such that
% \[\alpha(t)\in [\alpha_{\mathrm{min}},\alpha_{\mathrm{max}}], \quad t\in[0,T].\]
% \end{Lemma}

Building upon the results of the previous sections, we set out to prove \Cref{thm:bigthm}. Assuming \hyperlink{hyp:initial}{S1}, we fix the constants appearing in \hyperlink{hyp:alphabound}{S2} by writing
\begin{equation}
    \label{eq:defn:alpha:pm}
    \alpha_{\mathrm{min}}=\tfrac{1}{2}\inf_{t\geq 0}e^{\frac{2}{3}E_{\mathrm{max}}\int_0^{ t}f(s) \d s}\quad \text{and} \quad \alpha_{\mathrm{max}}=2\sup_{t\geq 0}e^{\frac{2}{3}E_{\mathrm{max}}\int_0^{t}f(s) \d s}.
\end{equation}
%based only on \hyperlink{hyp:initial}{S1}. 
Recall that the weight $w_{\mathrm{min}}$ of \hyperlink{hyp:wmin}{S3} and the asymptotic weight $w_\infty$ are then determined through \eqref{eqn:wmin}, in which $\delta_6$ and $C_6$ are the constants introduced in \Cref{lem:estimates}, which depend only on \hyperlink{hyp:initial}{S1} and \hyperlink{hyp:alphabound}{S2}. As a final preparation,  %for the proof of \Cref{thm:bigthm}, 
we estimate the deviation of the  modulation parameters from their leading-order approximations. Recall that ${\xi}(t)-\xi_0-\int_0^t {c}(s)\ \d s={\Omega}(t)$. 

\begin{lemma}\label{lem:approx}
Assume \hyperlink{hyp:initial}{S1} and the choices \eqref{eq:defn:alpha:pm} and \eqref{eqn:wmin} for \hyperlink{hyp:alphabound}{S2} and \hyperlink{hyp:wmin}{S3}.
%and the choices above for \hyperlink{hyp:alphabound}{S2} and \hyperlink{hyp:wmin}{S3}, 
If \hyperlink{hyp:exponential}{C4} is satisfied and \hyperlink{hyp:Tproperties2}{C5} holds for some $T>0$, then for each $\epsilon, \gamma>0$, the bound 
\begin{align*}e^{\gamma t}\|{v}(t)\|_{L^2_{w_\infty}}\leq\delta_{6}w_\infty^{1/2} , \quad 0\leq t\leq T\end{align*} 
implies
\begin{align*}
   &\sup_{t\in[0,T]} \left|\log({\alpha}(t))+\tfrac{2}{3}\epsilon\int_0^{ t}f(\gamma s) \d s\right|+\sup_{t\in[0,T]}\left|\Omega(t)-\tfrac{2}{3}\epsilon\int_0^t \frac{ f(\gamma s)}{c^{1/2}(s)} \d s\right|\\
   &\leq C_6 w_{\infty}^{-1/2}\frac{\epsilon}{\gamma}\sup_{t\in[0,T]}e^{\gamma t}\|v(t)\|_{L^2_{w_\infty}}
   +\frac{C_6}{\gamma}\sup_{t\in[0,T]}e^{2\gamma t}\|v(t)\|^2_{L^2_{w_\infty}}.
\end{align*}
\end{lemma}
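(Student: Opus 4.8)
The plan is to differentiate both deviation quantities in time, recognize that the resulting integrands are exactly the expressions controlled by \eqref{eqn:modulationestimproved} in \Cref{lem:estimates}, and then integrate. For the amplitude, since $\alpha(0)=1$ and $\tfrac{\d}{\d t}\log\alpha(t)=\alpha_t(t)/\alpha(t)$, I would write
\[
\log\alpha(t)+\tfrac{2}{3}\epsilon\int_0^{t}f(\gamma s)\,\d s=\int_0^t\frac{1}{\alpha(s)}\Big(\alpha_s(s)+\tfrac{2}{3}\alpha(s)\epsilon f(\gamma s)\Big)\,\d s .
\]
For the phase, the key observation is that $c(s)=c_0\alpha^{-2}(s)$ gives $c^{-1/2}(s)=c_0^{-1/2}\alpha(s)$, so the leading-order term in \eqref{eqn:leadingorderxi} equals $\tfrac{2}{3}\epsilon c_0^{-1/2}\int_0^t\alpha(s)f(\gamma s)\,\d s$; since $\Omega(0)=0$, this yields
\[
\Omega(t)-\tfrac{2}{3}\epsilon\int_0^t\frac{f(\gamma s)}{c^{1/2}(s)}\,\d s=\int_0^t\Big(\Omega_s(s)-\tfrac{2}{3}c_0^{-1/2}\alpha(s)\epsilon f(\gamma s)\Big)\,\d s .
\]

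Next I would invoke \Cref{lem:estimates} with the symmetric weight $\mathbf{b}=(w_\infty,w_\infty)$. This is admissible because $w_\infty\in(0,\sqrt{c_*}/3]$, and the hypothesis $e^{\gamma t}\|v(t)\|_{L^2_{w_\infty}}\leq\delta_6 w_\infty^{1/2}$ implies $\|v(t)\|_{L^2_{w_\infty}}\leq\delta_6 w_\infty^{1/2}$ since $e^{\gamma t}\geq 1$. Estimate \eqref{eqn:modulationestimproved} then bounds the two integrands above, in absolute value, by $C_6\epsilon|f(\gamma t)|w_\infty^{-1/2}\|v(t)\|_{L^2_{w_\infty}}+C_6\|v(t)\|^2_{L^2_{w_\infty}}$ for every $t\in[0,T]$.

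To finish, I would insert \hyperlink{hyp:exponential}{C4} in the form $|f(\gamma t)|\leq e^{-\gamma t}$ together with $\|v(t)\|_{L^2_{w_\infty}}\leq e^{-\gamma t}\sup_{s\leq T}e^{\gamma s}\|v(s)\|_{L^2_{w_\infty}}$ and the analogous bound for $\|v(t)\|^2_{L^2_{w_\infty}}$; both integrands are then dominated by $e^{-2\gamma t}$ times the corresponding supremum appearing in the statement. Integrating over $[0,t]$ via $\int_0^\infty e^{-2\gamma s}\,\d s=\tfrac{1}{2\gamma}$, taking the supremum over $t\in[0,T]$, and bounding the extra factor $\alpha(s)^{-1}\leq\alpha_{\mathrm{min}}^{-1}$ present in the amplitude identity produces the claimed inequality, with the harmless factors $\tfrac{1}{2}$ and $\alpha_{\mathrm{min}}^{-1}$ (the latter depending only on \hyperlink{hyp:initial}{S1} through \eqref{eq:defn:alpha:pm}) absorbed into the constant, after enlarging $C_6$ if necessary.

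I expect no genuine obstacle here; the only point that requires care is the bookkeeping that matches the leading-order phase correction of \eqref{eqn:leadingorderxi}, expressed through $c(s)$, with the quantity $\tfrac{2}{3}c_0^{-1/2}\alpha(s)\epsilon f(\gamma s)$ appearing in \eqref{eqn:modulationestimproved}, via the rescaling identity $c=c_0\alpha^{-2}$, and the analogous treatment of $\log\alpha$ versus $\alpha$ (dividing by $\alpha$ and using the lower bound $\alpha_{\mathrm{min}}$). Everything else is a direct substitution followed by a single exponential integral.
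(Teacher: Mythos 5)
Your proposal is correct and follows essentially the same route as the paper: differentiate, rewrite the deviations so that the integrands become exactly the quantities controlled by \eqref{eqn:modulationestimproved} (using $c^{-1/2}=c_0^{-1/2}\alpha$ for the phase and dividing by $\alpha$ for the amplitude), insert $|f(\gamma t)|\leq e^{-\gamma t}$ and the exponential bound on $\|v(t)\|_{L^2_{w_\infty}}$, and integrate. Your explicit handling of the $\alpha^{-1}\leq\alpha_{\mathrm{min}}^{-1}$ factor in the $\log\alpha$ identity is in fact slightly more careful than the paper's one-line argument.
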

\begin{proof}
Estimating \eqref{eqn:modulation} for $t\in[0,T]$ as in \eqref{eqn:modulationestimproved} yields
\begin{align*}
    \left|\partial_t\log({\alpha}(t))+\tfrac{2}{3}\epsilon f(\gamma t) \right|+\left|\partial_t\Omega(t)-\tfrac{2}{3}\epsilon \frac{f(\gamma t)}{c^{1/2}(t)}\right|
    \leq & C_6w_{\infty}^{-1/2}\epsilon e^{-\gamma t} \|v(t)\|_{L^2_{w_\infty}}\\
    &+C_6\|v(t)\|^2_{L^2_{w_\infty}}.
\end{align*}
The result now follows by integrating.
% \begin{align*}
% \sup_{0\leq t\leq  T_{\mathrm{max}}}\left|\log({\alpha}(t))+\tfrac{2}{3}\epsilon \int_0^{{ t}}f(\gamma s) \d s\right|
%     \leq & \tilde{C}_1\frac{\epsilon}{\gamma}(\|\overline{v}_*\|_{H^1_{w}}+\|\overline{v}_*\|_{H^1}+ \epsilon^{1-2p})\\
%     &+\tilde{C}_1 \gamma^{-1}(\|\overline{v}_*\|^2_{H^1_{w}}+\|\overline{v}_*\|^2_{H^1}+\epsilon^{2-4p})
% \end{align*}
\end{proof}
For each $\epsilon,E>0$ and $\overline{v}_*\in H^2\cap H^1_w$, we now introduce the sets
\begin{align*}O_1=&\big\{T\geq 0 : e^{\gamma t}\|{v}(t)\|_{H^1_{\mathbf{w}(t)}}\leq C_8(\|\overline{v}_*\|_{H^1_{w}}+\|\overline{v}_*\|_{H^1}+ \epsilon^{1-2p}) \hbox{ for all } 0\leq t\leq T\big\},\\
O_2=&\big\{T\geq 0 : \|{v}(t)\|^2_{H^1}\leq \alpha_{\mathrm{min}}^{5} 
\hbox{ for all } 0\leq t\leq T\big\},\end{align*}
where $C_8$ is the constant from \Cref{prop:duhamel}, and we recall that the weight-function $\mathbf{w}(t) = \big(w_-(t), w_+(t) \big)$ is defined in \eqref{eq:defn:w:pm}. % and \eqref{eqn:weightdef}. 
We then define $ T_{\mathrm{max}}(\epsilon,\gamma,\overline{v}_*)\in [0,\infty]$ as %through
\[T_{\mathrm{max}}(\epsilon,\gamma,\overline{v}_*)=\sup (O_1 \cap O_2).\]
Recalling that $E=\epsilon/\gamma$, the key ingredient toward establishing \Cref{thm:bigthm} is to show that $ T_{\mathrm{max}}(\epsilon,\gamma,\overline{v}_*)=\infty$. 

\begin{proof}[Proof of \Cref{thm:bigthm}]
Pick $\epsilon\in (0,1]$ and $\gamma>0$ that satisfies $ \epsilon/\gamma\in (0, E_{\mathrm{max}}]$ and
\[ \epsilon^{1+p}+\epsilon^{2-4p}\leq  \delta_1 \gamma, \quad \epsilon^{p}\geq \gamma , \quad \gamma\leq \delta_1,\]
and consider an initial condition $\overline{v}_*\in H^2\cap H^1_w$ 
that satisfies $
\|\overline{v}_*\|^2_{H^1}+\|\overline{v}_*\|^2_{H^1_w}\leq \delta_1\gamma$.
Then, the conditions in \Cref{thm:bigthm} are met, and we note that \[\epsilon \leq \delta_1 E_{\mathrm{max}}\quad  \text{and}\quad  \|\overline{v}_*\|_{H^1}+\|\overline{v}_*\|_{H^1_w}\leq \sqrt{2}\delta_1.\]
Suppose, for the sake of contradiction, that $ T_{\mathrm{max}}<\infty$. By construction of $O_1$, we have
\[\sup_{t\in[0,T_{\mathrm{max}})}e^{\gamma t}\|{v}(t)\|_{H^1_{\mathbf{w}(t)}}\leq C_8\big(\|\overline{v}_*\|_{H^1_{w}}+\|\overline{v}_*\|_{H^1}+ \epsilon^{1-2p}\big)\leq  C_8(\sqrt{2}+1)\delta_1,\]
and
\[\sup_{t\in[0,T_{\mathrm{max}})}\gamma^{-1}e^{2\gamma t}\|{v}(t)\|^2_{H^1_{\mathbf{w}(t)}}\leq 3C_8^2\gamma^{-1}\big(\|\overline{v}_*\|_{H^1_{w}}^2+\|\overline{v}_*\|_{H^1}^2+ \epsilon^{2-4p}\big)\leq 6C_8^2\delta_1.\]
In particular, \[\sup_{t\in[0,T_{\mathrm{max}})}e^{\gamma t}\|{v}(t)\|_{H^1_{\mathbf{w}(t)}}\leq  \min\{\delta_{6}w_\infty^{1/2},1\}\] for $\delta_1$ sufficiently small. Thus, we may use \Cref{lem:approx} to obtain 
\begin{align*}
   \sup_{t\in[0,T_{\mathrm{max}})} \left|\log({\alpha}(t))+\tfrac{2}{3}\epsilon\int_0^{ t}f(\gamma s) \d s\right|
   \leq & C_6 C_8w_{\infty}^{-1/2} \frac{\epsilon}{\gamma}\big(\|\overline{v}_*\|_{H^1_{w}}+\|\overline{v}_*\|_{H^1}+ \epsilon^{1-2p}\big)
   \\
   &+C_6 C_8^2\gamma ^{-1}\big(\|\overline{v}_*\|_{H^1_{w}}+\|\overline{v}_*\|_{H^1}+ \epsilon^{1-2p}\big)^2\\
   \leq & C_6C_8w_{\infty}^{-1/2}(\sqrt{2}E_{\mathrm{max}}\delta_1 +\delta_1)+6C_6C_8^2\delta_1
\end{align*}
and in particular
\[\alpha_{\mathrm{min}}<\alpha(t)<\alpha_{\mathrm{max}} ,\quad t\in [0, T_{\mathrm{max}}],\]
for $\delta_1$ sufficiently small. By construction of $O_2$ and \Cref{lem:scaling}, we have
\[\sup_{t\in[0,T_{\mathrm{max}})}\|\overline{v}(t)\|_{H^1}^2\leq 1.\]
Via \Cref{prop:H1}, we may improve this bound to 
\begin{align*}\sup_{t\in[0,T_{\mathrm{max}})}\|{v}(t)\|_{H^1}^2\leq & C_{15}(\|\overline{v}_*\|^2_{H^1}\!+\!\|\overline{v}_*\|^2_{H^1_{w_\infty}})
\!+\!C_{15}C_8\frac{\epsilon}{\gamma}\big(\|\overline{v}_*\|_{H^1_{w}}\!+\!\|\overline{v}_*\|_{H^1}+ \epsilon^{1-2p}\big)\\
&+C_{15}C_8^2\gamma^{-1}\big(\|\overline{v}_*\|_{H^1_{w}}+\|\overline{v}_*\|_{H^1}+ \epsilon^{1-2p}\big)^2\\
\leq &C_{15}\delta_1^2+C_{15}C_8(\sqrt{2}E_{\mathrm{max}}\delta_1 +\delta_1)+6C_{15}C_8^2\delta_1.\end{align*}
Combining our results so far, we find via \eqref{eqn:kdef} that
\begin{align*}
    K_{\epsilon,\gamma}(T_{\mathrm{max}}-q)\leq& \sup_{s\in[0,T_{\mathrm{max}})}\big(\|{v}
(s)\|_{H^1}+e^{ \gamma s}\|{v}
(s)\|_{H^1_{\mathbf{w}(s)}}+\gamma^{-1}e^{2 \gamma s}\|{v}
(s)\|^2_{H^1_{\mathbf{w}(s)}}\big)\\
&+E_{\mathrm{max}}\delta_1+\delta_1
<\delta_8
\end{align*}
 for any $q\in (0,T_{\mathrm{max}}]$, decreasing the size of $\delta_1 > 0$ if necessary. By continuity of 
 \[t \mapsto e^{\gamma t}\| v(t)\|_{H^1_{\mathbf{w}(t)}} \quad  \text{and} \quad t \mapsto \| v(t)\|_{H^1},\] there must be a small $r>0$ for which
% \[\sup_{t\in[0,T_{\mathrm{max}}+r]}\big[e^{\gamma t}\| v(t)\|_{H^1_{\mathbf{w}(t)}}+\| v(t)\|_{H^1}\big]\leq 2/3 \delta_{8} \]
\begin{align*}
    K_{\epsilon,\gamma}(T_{\mathrm{max}}-q+r)\leq \delta_{8}.
\end{align*}
Having established a priori control on $K_{\epsilon,\gamma}$, we may apply \Cref{prop:duhamel} to obtain
\[\sup_{t\in[0,T_{\mathrm{max}}-q+r]}e^{\gamma t}\| v(t)\|_{H^1_{\mathbf{w}(t)}}\leq C_8\big(\|\overline{v}_*\|_{H^1_{w}}+\|\overline{v}_*\|_{H^1}+ \epsilon^{1-2p}\big).\]
Choosing $q<r$ shows that $ T_{\mathrm{max}}$ is not maximal, allowing us to conclude that, indeed, $ T_{\mathrm{max}}=\infty$. 

To complete the proof, we now observe that
\begin{align*}\sup_{t \geq 0}e^{\gamma t}\|{v}(t)\|_{H^1_{w_\infty}}\leq \sup_{t \geq 0}e^{\gamma t}\| v(t)\|_{H^1_{\mathbf{w}(t)}} \leq C_8\big(\|\overline{v}_*\|_{H^1_{w}}+\|\overline{v}_*\|_{H^1}+ \epsilon^{1-2p}\big),
\end{align*}
which establishes item 2. Items 3 and 4 follow by applying \Cref{lem:approx}, while item 1 follows from \Cref{prop:H1}.
%with $T=\infty$.
% and 
% \[\sup_{t \geq 0}\|{v}(t)\|^2_{H^1}\leq C_{15}(\|\overline{v}_*\|^2_{H^1}+\|\overline{v}_*\|^2_{H^1_{w_\infty}})
% +6C_{15}C_8^2 \epsilon^{1-4p}.\]
\end{proof}

\appendix

\section{Decompositions} %Classical stability results}
\label{app:old}

 Our work makes frequent use of the linear stability theory of the operator $\mathcal{L}_c$ developed in \cite{pegoweinstein}. Let us briefly review these results here, based on the exposition in \cite{mizumachi}. Introducing the exponential weight $e^{wx}$ on $L^2$ moves the essential spectrum of the operator $\mathcal{L}_c$ from the imaginary axis into the stable halfplane, leaving a double eigenvalue at 0. This 0-eigenvalue has a two-dimensional generalized kernel spanned by $\partial_x\phi_{c}$ and $\partial_c \phi_{c}$. In particular, it is easily verified that $\mathcal{L}_{c} \partial_x\phi_{c}=0$ and $\mathcal{L}_{c} \partial_c \phi_{c}=\partial_x\phi_{c}$. The operator $\mathcal{L}_c$ is related to its (formal) adjoint $\mathcal{L}^*_c$ on the space $L^2_{-w}$ via the relation $\partial_x\mathcal{L}_c^*=-\mathcal{L}_c\partial_x$. In particular, $\mathcal{L}^*_c$ has a two-dimensional kernel spanned by $\phi_c$ and the primitive
\[\zeta_c (x)=\int_{-\infty}^x \partial_c \phi_c(y)\d y,\]
which satisfies $\zeta_c \in L^2_{-w}$.
The spectral projection onto the generalized kernel of $\mathcal{L}_c$ is given by
\begin{align}P_{c} f=\langle f, \eta_c^1 \rangle \partial_x \phi_{c} +\langle f, \eta_c^2\rangle \partial_c \phi_c,\label{eqn:spectral}\end{align}
where 
\begin{align}\eta_c^1=\tfrac{2}{9}c^{-1/2}\zeta_c+\tfrac{2}{9}c^{-2}\phi_c \quad \text{and} \quad \eta_c^2=\tfrac{2}{9}c^{-1/2}\phi_c, \label{eqn:linearcomb}\end{align}
and we write  $Q_c=I-P_c$ for the complementary projection. Based on the spectral properties of $\mathcal{L}_c$, the following can be concluded about the flow generated by this operator.

\begin{theorem}[\cite{pegoweinstein}]\label{thm:linearstability}
    Let $c>0$ and $0<w<\sqrt{c}$. Then, $\mathcal{L}_{c}$ is the generator of a $C_0$-semigroup on $H^s_w$ for any real $s$. For all $\beta>0$ which satisfy $\beta<w(c-w^2)$, there exists a constant $C>0$ such that, for all $g\in L_{w}^2$, $t>0$ and $k\in\{0,1\}$, we have
    \begin{align}\label{eqn:linear}
        \|\partial_x^k  e^{\mathcal{L}_{c}t}Q_c g\|_{L^2_{w}}\leq Ct^{-k/2}e^{-\beta t}\|g\|_{L^2_{w}}.
    \end{align}
\end{theorem}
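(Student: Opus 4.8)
The plan is to remove the exponential weight by conjugation and reduce to a Fourier-analytic statement about a constant-coefficient principal part perturbed by an exponentially localized lower-order term. Set $\mathcal{L}_c^w := e^{wx}\mathcal{L}_c e^{-wx}$, acting on the unweighted Sobolev scale; it splits as $\mathcal{L}_c^w = \mathcal{A}_w + \mathcal{B}_w$, where $\mathcal{A}_w = -(\partial_x-w)^3 + c(\partial_x - w)$ is the Fourier multiplier with symbol $p_w(k) = -(ik-w)^3 + c(ik-w)$ and $\mathcal{B}_w v = -2(\partial_x-w)(\phi_c v)$ collects the terms carrying the Schwartz profile $\phi_c$; in particular $\mathcal{B}_w$ is a first-order operator with rapidly decaying coefficients, so it is $\mathcal{A}_w$-compact (and $\mathcal{A}_w$-bounded with relative bound $0$). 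A direct computation gives
\[\mathrm{Re}\,p_w(k) = w^3 - cw - 3wk^2 = -w(c-w^2) - 3wk^2,\]
so $|e^{tp_w(k)}| \le e^{-w(c-w^2)t}\,e^{-3wk^2 t}$; the Gaussian factor is the effective high-frequency parabolic smoothing, and since $\sup_{k}|k|^k e^{-3wk^2 t}\le C t^{-k/2}$ for $k\in\{0,1\}$ we obtain that $\mathcal{A}_w$ generates a $C_0$-semigroup on every $H^s$ with $\|\partial_x^k e^{t\mathcal{A}}g\|_{L^2_w}\le Ct^{-k/2}e^{-w(c-w^2)t}\|g\|_{L^2_w}$, where $\mathcal{A}=-\partial_x^3+c\partial_x$. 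Standard lower-order perturbation theory (or the Picard iteration underlying the well-posedness in \cite[Appendix A]{pegoweinstein}) then yields that $\mathcal{L}_c$ generates a $C_0$-semigroup on each $H^s_w$, and the Duhamel identity $\partial_x e^{t\mathcal{L}_c} = \partial_x e^{t\mathcal{A}} + \int_0^t \partial_x e^{(t-s)\mathcal{A}}\,\mathcal{B}\,e^{s\mathcal{L}_c}\,ds$ together with $\|\mathcal{B}h\|_{L^2_w}\le C\|h\|_{H^1_w}$ upgrades this to the short-time bound $\|\partial_x e^{t\mathcal{L}_c}g\|_{L^2_w}\le Ct^{-1/2}\|g\|_{L^2_w}$ for $t\in(0,1]$.

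Next I would pin down the spectrum of $\mathcal{L}_c$ on $L^2_w$. Since $\mathcal{B}_w$ is $\mathcal{A}_w$-compact, Weyl's theorem gives $\sigma_{\mathrm{ess}}(\mathcal{L}_c) = \sigma_{\mathrm{ess}}(\mathcal{A}_w) = \{p_w(k):k\in\R\}\subset\{\mathrm{Re}\,\lambda\le -w(c-w^2)\}$, so for any $\beta < w(c-w^2)$ the portion of $\sigma(\mathcal{L}_c)$ in $\{\mathrm{Re}\,\lambda\ge-\beta\}$ is a finite set of eigenvalues of finite algebraic multiplicity. The decisive input — the heart of the Pego--Weinstein analysis \cite{pegoweinstein} — is that the Evans function $D(\lambda)$ of the third-order eigenvalue ODE $\mathcal{L}_c\psi=\lambda\psi$ has no zeros in $\{\mathrm{Re}\,\lambda\ge 0\}$ other than a zero of order exactly two at $\lambda=0$, and there are no embedded eigenvalues on the boundary of the shifted essential spectrum; this forces $\sigma(\mathcal{L}_c)\cap\{\mathrm{Re}\,\lambda\ge-\beta\} = \{0\}$, with $0$ geometrically simple and algebraically double. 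Its generalized kernel is $\mathrm{span}\{\partial_x\phi_c,\partial_c\phi_c\}$ (since $\mathcal{L}_c\partial_x\phi_c=0$, $\mathcal{L}_c\partial_c\phi_c=\partial_x\phi_c$), so the Riesz projection $\tfrac{1}{2\pi i}\oint_{|\lambda|=r}(\lambda-\mathcal{L}_c)^{-1}\,d\lambda$ is precisely the operator $P_c$ of \eqref{eqn:spectral}, with $\eta_c^1,\eta_c^2$ as in \eqref{eqn:linearcomb} determined by biorthogonality against the basis $\{\phi_c,\zeta_c\}$ of $\ker\mathcal{L}_c^*$ (one checks $\mathcal{L}_c^*\phi_c=0$ and $\mathcal{L}_c^*\zeta_c=\phi_c$). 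Thus $\mathcal{L}_c$ restricted to the closed invariant subspace $\mathrm{ran}\,Q_c$ has spectrum in $\{\mathrm{Re}\,\lambda\le-\beta\}$.

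It then remains to convert this spectral gap into the semigroup bound, which on the Hilbert space $L^2_w$ is supplied by the Gearhart--Pr\"uss theorem: one needs a uniform bound for $(\lambda-\mathcal{L}_c)^{-1}$ on $\mathrm{ran}\,Q_c$ over a half-plane $\{\mathrm{Re}\,\lambda\ge-\beta_0\}$ with $\beta_0\in(\beta,w(c-w^2))$. On $\mathrm{ran}\,Q_c$ the resolvent is analytic across $\lambda=0$, hence bounded on any bounded portion of that half-plane; for $|\mathrm{Im}\,\lambda|\to\infty$ one factors $\lambda-\mathcal{L}_c^w = (\lambda-\mathcal{A}_w)\big(I-(\lambda-\mathcal{A}_w)^{-1}\mathcal{B}_w\big)$ and uses $\|(\lambda-\mathcal{A}_w)^{-1}\| = \sup_k|p_w(k)-\lambda|^{-1}\to 0$ along vertical lines in the region — the only frequencies for which $|p_w(k)-\lambda|$ could be small are those with $\mathrm{Im}\,p_w(k)\approx\mathrm{Im}\,\lambda$, hence $|k|\sim|\mathrm{Im}\,\lambda|^{1/3}$ and $\mathrm{Re}\,p_w(k)\sim-3w|\mathrm{Im}\,\lambda|^{2/3}\to-\infty$ — so the Neumann series converges with a uniform bound. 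Gearhart--Pr\"uss then gives $\|e^{t\mathcal{L}_c}Q_c\|_{L^2_w\to L^2_w}\le Me^{-\beta t}$. For the $k=1$ estimate at all times, split $\partial_x e^{t\mathcal{L}_c}Q_c = \big(\partial_x e^{\min(t,1)\mathcal{L}_c}Q_c\big)\big(e^{(t-\min(t,1))\mathcal{L}_c}Q_c\big)$: for $t\le 1$ this is $\le Ct^{-1/2}\le C't^{-1/2}e^{-\beta t}$ by the short-time smoothing, and for $t\ge 1$ it is $\le Ce^{-\beta_0(t-1)}\le C''t^{-1/2}e^{-\beta t}$ because $t^{1/2}e^{-(\beta_0-\beta)t}$ is bounded on $[1,\infty)$; together with the $k=0$ bound this is \eqref{eqn:linear}.

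The step I expect to be the genuine obstacle is the spectral count — that $0$ is the only eigenvalue of $\mathcal{L}_c$ with $\mathrm{Re}\,\lambda\ge-\beta$ and that its algebraic multiplicity is exactly two. This is precisely the spectral-stability statement for the KdV solitary wave; it rests on the fine structure of the linearization (the sign of $D''(0)$, equivalently a Sturm--Liouville / Lax-pair argument) rather than on soft semigroup theory, and everything else above is routine once it is granted. For this reason I would ultimately quote the result from \cite{pegoweinstein} (see also \cite{mizumachi}) rather than reprove it.
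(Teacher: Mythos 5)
This theorem is stated in the paper as a quoted result from \cite{pegoweinstein} and is not proved there; your sketch is a faithful outline of the argument in that reference (conjugation by $e^{wx}$, location of the essential spectrum via the symbol $p_w$, the Evans-function count showing $\lambda=0$ is the only eigenvalue to the right of the essential spectrum with algebraic multiplicity two, resolvent bounds on vertical lines converting the gap into semigroup decay, and parabolic-type smoothing from $\mathrm{Re}\,p_w(k)=-w(c-w^2)-3wk^2$ for the $k=1$ estimate). You correctly isolate the one genuinely hard ingredient --- the absence of nonzero eigenvalues and the exact order-two vanishing of $D(\lambda)$ at the origin --- and, like the paper, ultimately defer it to \cite{pegoweinstein}, so this is an accurate account of the standard proof rather than an independent one; the remaining steps (relative compactness of the localized first-order term, the Duhamel bootstrap for short-time smoothing, the splitting at $t=1$) are routine and correctly executed.
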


We conclude here with a result regarding the orthogonality conditions arising from the projection $P_c$. This result ensures that
the decomposition \eqref{eqn:unscaled} underlying the arguments
in this paper is uniquely defined.

\begin{lemma}[\cite{mizumachi}]\label{lem:implicit}
    Let $c_*>0$ and $w\in(0,\sqrt{c_*})$. Then, there exist constants $\delta_2, C_2>0$ such that,  for each $\overline{v}_* \in H^1_w$, the bound $\|\overline{v}_*\|_{L^2_w}\leq \delta_2$ implies that there exist unique parameters $c_0>0$ and $\xi_0\in \R$ such that
    \[\phi_{c_*}(x)+\overline{v}_*(x)=\phi_{c_0}(x-\xi_0)+\overline{v}_0(x-\xi_0) \quad \text{with} \quad \langle\overline{v}_0,\phi_{c_0}\rangle =\langle\overline{v_0},\zeta_{c_0}\rangle=0\]
    and
    \begin{align*}\|\overline{v}_0\|_{H^1_w}+|\xi_0|+|c_*-c_0|\leq &C_2\|\overline{v}_*\|_{H^1_w},\\
    \|\overline{v}_0\|_{H^1}\leq& C_2(\|\overline{v}_*\|_{H^1}+\|\overline{v}_*\|_{H^1_w}).
    \end{align*}
\end{lemma}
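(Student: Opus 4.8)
The plan is to produce $(c_0,\xi_0)$ by an implicit-function-theorem argument applied to the two orthogonality conditions, viewed as equations parametrised by $\overline{v}_*$, and then to read off the bounds on $\overline{v}_0$ from the control obtained on $(c_0,\xi_0)$. First I would rephrase the decomposition: after the substitution $y=x-\xi_0$, requiring $\phi_{c_*}+\overline{v}_*=\phi_{c_0}(\cdot-\xi_0)+\overline{v}_0(\cdot-\xi_0)$ with $\langle\overline{v}_0,\phi_{c_0}\rangle=\langle\overline{v}_0,\zeta_{c_0}\rangle=0$ is equivalent to requiring that $\overline{v}_0:=\phi_{c_*}(\cdot+\xi_0)+\overline{v}_*(\cdot+\xi_0)-\phi_{c_0}$ be $L^2$-orthogonal to $\phi_{c_0}$ and $\zeta_{c_0}$. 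Using $\langle g(\cdot+\xi),h\rangle=\langle g,h(\cdot-\xi)\rangle$, this leads me to define
\[
G(\xi,c;\overline{v}_*)=\begin{bmatrix}\langle\phi_{c_*},\phi_c(\cdot-\xi)\rangle+\langle\overline{v}_*,\phi_c(\cdot-\xi)\rangle-\langle\phi_c,\phi_c\rangle\\ \langle\phi_{c_*},\zeta_c(\cdot-\xi)\rangle+\langle\overline{v}_*,\zeta_c(\cdot-\xi)\rangle-\langle\phi_c,\zeta_c\rangle\end{bmatrix}
\]
on $(\xi,c,\overline{v}_*)\in\R\times(0,\infty)\times L^2_w$, so that $\overline{v}_0$ satisfies the orthogonality conditions exactly when $G(\xi_0,c_0;\overline{v}_*)=0$.

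The key structural point, and the step I expect to be the main obstacle, is the regularity of $G$: differentiating $\xi\mapsto\overline{v}_*(\cdot+\xi)$ directly would cost a derivative on $\overline{v}_*$, but here the translation sits on $\phi_c$ and $\zeta_c$ only. Since $\phi_c$, $\partial_x\phi_c$, $\partial_c\phi_c$, $\zeta_c$, $\partial_x\zeta_c=\partial_c\phi_c$, and $\partial_c\zeta_c$ all lie in $L^2_{-w}$ whenever $0<w<\sqrt{c}$ (they decay exponentially at $-\infty$ faster than $e^{-wx}$ grows, and $\zeta_c,\partial_c\zeta_c$ are bounded at $+\infty$), the map $(\xi,c)\mapsto(\phi_c(\cdot-\xi),\zeta_c(\cdot-\xi))\in(L^2_{-w})^2$ is smooth; pairing against $\overline{v}_*\in L^2_w$ is a bounded bilinear operation, so $G$ is smooth in $(\xi,c)$ with derivatives bounded locally uniformly in $\overline{v}_*$, and affine in $\overline{v}_*$. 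This sidesteps the loss-of-derivative issue and is the only genuinely delicate ingredient; everything else is bookkeeping.

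Next I would evaluate at the base point $(0,c_*,0)$, where $G=0$, and compute the $(\xi,c)$-Jacobian there. Integrating by parts and using $\partial_x\zeta_c=\partial_c\phi_c$, $\langle\phi_c,\phi_c\rangle=6c^{3/2}$, $\langle\partial_x\phi_c,\phi_c\rangle=0$, and $\langle\partial_x\phi_c,\zeta_c\rangle=-\langle\phi_c,\partial_c\phi_c\rangle=-\tfrac{9}{2}c^{1/2}$, I obtain
\[
D_{(\xi,c)}G(0,c_*;0)=\begin{bmatrix}0 & -\tfrac{9}{2}c_*^{1/2}\\ -\tfrac{9}{2}c_*^{1/2} & -\langle\partial_c\phi_{c_*},\zeta_{c_*}\rangle\end{bmatrix},
\]
whose determinant is $-\tfrac{81}{4}c_*\neq0$. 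By continuity of $DG$ this matrix stays invertible with uniformly bounded inverse on a fixed neighbourhood of $(0,c_*)$ and for $\overline{v}_*$ in a small $L^2_w$-ball.

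Finally I would run a quantitative implicit function theorem as a contraction: with $A:=[D_{(\xi,c)}G(0,c_*;0)]^{-1}$ put $\Phi(\xi,c):=(\xi,c)-AG(\xi,c;\overline{v}_*)$, whose fixed points are the zeros of $G$. Since $D\Phi$ vanishes at the base point, $\Phi$ is a $\tfrac12$-contraction on a small ball around $(0,c_*)$; since $G(0,c_*;\overline{v}_*)=(\langle\overline{v}_*,\phi_{c_*}\rangle,\langle\overline{v}_*,\zeta_{c_*}\rangle)$ has norm $\leq C\|\overline{v}_*\|_{L^2_w}$, for $\|\overline{v}_*\|_{L^2_w}\leq\delta_2$ with $\delta_2$ small $\Phi$ maps that ball into itself, giving a unique fixed point $(\xi_0,c_0)$ there with $|\xi_0|+|c_*-c_0|\leq 2\|AG(0,c_*;\overline{v}_*)\|\leq C\|\overline{v}_*\|_{L^2_w}\leq C\|\overline{v}_*\|_{H^1_w}$; uniqueness is local, within the contraction ball. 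Setting $\overline{v}_0:=\phi_{c_*}(\cdot+\xi_0)+\overline{v}_*(\cdot+\xi_0)-\phi_{c_0}$, which satisfies the orthogonality conditions and the claimed identity by construction, I would close the estimates with the triangle inequality, using that $c\mapsto\phi_c$ and $\xi\mapsto\phi_{c_*}(\cdot+\xi)$ are Lipschitz into $H^1_w$ and into $H^1$ near the base point, that $\|\overline{v}_*(\cdot+\xi_0)\|_{H^1_w}\leq e^{w|\xi_0|}\|\overline{v}_*\|_{H^1_w}$, and that $\|\overline{v}_*(\cdot+\xi_0)\|_{H^1}=\|\overline{v}_*\|_{H^1}$: this yields $\|\overline{v}_0\|_{H^1_w}\leq C(|\xi_0|+|c_*-c_0|+\|\overline{v}_*\|_{H^1_w})\leq C\|\overline{v}_*\|_{H^1_w}$ and $\|\overline{v}_0\|_{H^1}\leq C(|\xi_0|+|c_*-c_0|+\|\overline{v}_*\|_{H^1})\leq C(\|\overline{v}_*\|_{H^1}+\|\overline{v}_*\|_{H^1_w})$, as claimed.
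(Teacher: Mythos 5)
The paper does not prove this lemma at all; it is imported verbatim from \cite{mizumachi} (and ultimately goes back to the modulation lemmas of Pego--Weinstein), so there is no in-paper proof to compare against. Your reconstruction is the standard argument and is correct: you place the translation on $\phi_c$ and $\zeta_c$ rather than on $\overline{v}_*$ so that the orthogonality map $G$ is smooth in $(\xi,c)$ with the $L^2_w$--$L^2_{-w}$ duality absorbing the low regularity of $\overline{v}_*$, the Jacobian at $(0,c_*;0)$ has determinant $-\tfrac{81}{4}c_*\neq 0$, and the quantitative contraction plus the triangle-inequality bookkeeping for $\overline{v}_0$ yields exactly the stated bounds (indeed slightly stronger, with $\|\overline{v}_*\|_{L^2_w}$ controlling $|\xi_0|+|c_*-c_0|$).
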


\section{Time-varying weights}\label{app:weight}

Our goal here is to establish several properties of the time-dependent weights $\mathbf{w}(t)=\big(w_-(t),w_+(t)\big)$ given by
\[w_-(t)=w_{\mathrm{min}}(\tfrac{w_\infty}{w_{\mathrm{min}}})^{1-e^{- \gamma  t}}, \quad w_+(t)=w(\tfrac{w_\infty}{w})^{1-e^{- \gamma  t}}\]
that were used to control the perturbation over long time-scales in \Cref{sec:longtime}. In particular, we prove \Cref{lem:weightcondition}.

\begin{proof}[Proof of \Cref{lem:weightcondition}] 
Writing
\[\epsilon_1=\sup_{t\in [0,T]}e^{\gamma t}\|{v}
(t)\|_{L^2_{w_\infty}},\]
we note that for $s\in[0,\delta]$, we may compute % it holds that
\begin{align*}
    \big|\log\big({\alpha}(t)\big)-\log\big({\alpha}(t+s)\big)\big|= &\left|\int_t^{t+s}\log({\alpha}(r))^\prime \d r\right| \\ \stackrel{\eqref{eqn:modulationestimate}}{\leq} &C_6\int_t^{t+s}\epsilon e^{-\gamma r} (1+w_\infty^{-1/2}\epsilon_1 e^{-\gamma r})+\epsilon_1^2 e^{-2\gamma r}\d r\\
    \leq \ & C_6 \big(\epsilon(1+w_\infty^{-1/2}\epsilon_1)+\epsilon_1^2 \big)\int_t^{t+s} e^{-\gamma r} \d r
    \\
    = \ &C_6 \gamma^{-1}\big(\epsilon(1+w_\infty^{-1/2}\epsilon_1)+\epsilon_1^2 \big) e^{-\gamma t}(1-e^{-\gamma s})\\\leq \ & 2 C_6 E_{\mathrm{max}}(2+\delta_6) e^{1/2}(e^{-\gamma (t+s/2)}-e^{-\gamma (t+s)}),
\end{align*}
where we have used that $1-e^{-x}\leq 2(1-e^{-x/2})$ for all $x\geq 0$ and applied the assumptions $\epsilon_1\leq \min\{\delta_6 w_\infty^{1/2},(\gamma E_{\mathrm{max}})^{1/2}\}$ and $\frac{\epsilon}{\gamma}\leq E_{\mathrm{max}}$. Taking an exponential then gives 
\begin{align*}
    \frac{{\alpha}(t)}{{\alpha}(t+s)}\leq \frac{(e^{2 C_6 E_{\mathrm{max}}(2+\delta_6) e^{1/2}})^{e^{-\gamma (t+s/2)}}}{(e^{2 C_6 E_{\mathrm{max}}(2+\delta_6) e^{1/2}})^{e^{-\gamma (t+s)}}}=\frac{w_+(t+s/2)}{w_+(t+s)},
\end{align*}
and
\begin{equation*}
     \frac{{\alpha}(t+s)}{{\alpha}(t)}\leq \frac{(e^{2 C_6 E_{\mathrm{max}}(2+\delta_6) e^{1/2}})^{-e^{-\gamma (t+s)}}}{(e^{2 C_6 E_{\mathrm{max}}(2+\delta_6) e^{1/2}})^{-e^{-\gamma (t+s/2)}}}= \frac{w_-(t+s)}{w_-(t+s/2)}.   \qedhere
\end{equation*}

% Moving on to the condition on $w_-$, we write
% \begin{align*}
%     \log({\alpha}(t+s))-\log({\alpha}(t))= &\int_t^{t+s} \log({\alpha}(r))^\prime \d r  \\
%     \leq &2 \tilde{C}_1 \gamma^{-1}(\epsilon+\epsilon_1^2 ) e^{\gamma \delta/2}(e^{- \gamma (t+s/2)}-e^{- \gamma (t+s)}),
% \end{align*}
\end{proof}
We finally establish a bound %on the supremum over $s\in [0,t]$ of the quantity
for the quantity
\[q(t,s,\delta)=\tfrac{1}{w_-(t+\delta/2)-w_-(s)}+\tfrac{1}{w_+(s)-w_+(t+\delta/2)},\]
which is used in the proof of \Cref{lem:duhamel}.

\begin{lemma}\label{lem:weight}
Assuming \hyperlink{hyp:initial}{S1} and \hyperlink{hyp:wmin}{S3}, there exists a constant $C_{15}>0$ such that
    \[\sup_{s\in[0,t]} e^{-\gamma  s}|q(t,s,\delta)|\leq C_{15} \frac{e^{\gamma{\delta}/2}}{\gamma{\delta}}, \]
   for all $t,\gamma,\delta>0$.\end{lemma}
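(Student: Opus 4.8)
The plan is to bound the two summands of $q(t,s,\delta)$ separately, exploiting that $w_-$ is strictly increasing and $w_+$ strictly decreasing, so that for $s\le t<t+\delta/2$ both denominators are strictly positive. First I would rewrite the weights in exponential form: with the fixed positive constants $A=\log(w_\infty/w_{\mathrm{min}})$ and $B=\log(w/w_\infty)$ determined by \hyperlink{hyp:wmin}{S3} and the choice \eqref{eqn:wmin}, one has $w_-(r)=w_{\mathrm{min}}e^{A}e^{-Ae^{-\gamma r}}$ and $w_+(r)=w_\infty e^{Be^{-\gamma r}}$.

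Next I would invoke the elementary tangent-line inequality $e^{a}-e^{b}\ge e^{b}(a-b)$, valid for all real $a,b$ by convexity of $\exp$. Applying it with $a=-Ae^{-\gamma(t+\delta/2)}\ge b=-Ae^{-\gamma s}$ (the ordering holds since $s\le t+\delta/2$) gives
\[w_-(t+\delta/2)-w_-(s)\ \ge\ w_-(s)\,A\bigl(e^{-\gamma s}-e^{-\gamma(t+\delta/2)}\bigr)\ \ge\ w_{\mathrm{min}}\,A\bigl(e^{-\gamma s}-e^{-\gamma(t+\delta/2)}\bigr),\]
where the last step uses $w_-(s)\ge w_{\mathrm{min}}$. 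Similarly, applying the inequality with $a=Be^{-\gamma s}\ge b=Be^{-\gamma(t+\delta/2)}$ and using $e^{Be^{-\gamma(t+\delta/2)}}\ge 1$ yields
\[w_+(s)-w_+(t+\delta/2)\ \ge\ w_\infty\,B\bigl(e^{-\gamma s}-e^{-\gamma(t+\delta/2)}\bigr).\]
Dividing $e^{-\gamma s}$ by these two lower bounds produces in both cases the common factor $\bigl(1-e^{-\gamma(t+\delta/2-s)}\bigr)^{-1}$, since $e^{-\gamma s}/\bigl(e^{-\gamma s}-e^{-\gamma(t+\delta/2)}\bigr)=\bigl(1-e^{-\gamma(t+\delta/2-s)}\bigr)^{-1}$.

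Finally, since $s\le t$ forces $t+\delta/2-s\ge\delta/2$ and $x\mapsto 1-e^{-x}$ is increasing, I would estimate $\bigl(1-e^{-\gamma(t+\delta/2-s)}\bigr)^{-1}\le\bigl(1-e^{-\gamma\delta/2}\bigr)^{-1}$, and then use $\bigl(1-e^{-x}\bigr)^{-1}=e^{x}/(e^{x}-1)\le e^{x}/x$ (again from $e^{x}\ge 1+x$) with $x=\gamma\delta/2$ to obtain $\bigl(1-e^{-\gamma\delta/2}\bigr)^{-1}\le 2e^{\gamma\delta/2}/(\gamma\delta)$. Summing the two contributions and taking the supremum over $s\in[0,t]$ then gives the claimed bound with $C_{15}=2/(Aw_{\mathrm{min}})+2/(Bw_\infty)$. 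I do not expect a genuine obstacle here; the only points requiring care are keeping the monotonicity and sign ordering consistent so that every denominator and every exponential difference is taken in the correct direction, and recording that the exponential-form prefactor of $w_+$ equals $w_\infty$ exactly while for $w_-$ one only has $w_-(s)\ge w_{\mathrm{min}}$ — these are what make the constant clean and independent of $\gamma$ apart from the explicit factor $e^{\gamma\delta/2}/(\gamma\delta)$.
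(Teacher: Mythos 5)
Your proof is correct. The key computations all check out: writing $w_-(r)=w_{\mathrm{min}}e^{A}e^{-Ae^{-\gamma r}}$ and $w_+(r)=w_\infty e^{Be^{-\gamma r}}$ is an exact rewriting of \eqref{eqn:weightdef}--\eqref{eq:defn:w:pm}; the tangent-line inequality $e^{a}\geq e^{b}+e^{b}(a-b)$ is applied with the correct ordering of $a$ and $b$ in both cases, so both denominators of $q$ are bounded below by a positive multiple of $e^{-\gamma s}-e^{-\gamma(t+\delta/2)}$; and the chain $\bigl(1-e^{-\gamma(t+\delta/2-s)}\bigr)^{-1}\leq\bigl(1-e^{-\gamma\delta/2}\bigr)^{-1}\leq 2e^{\gamma\delta/2}/(\gamma\delta)$ is valid and yields the stated constant $C_{15}=2/(Aw_{\mathrm{min}})+2/(Bw_\infty)$, which depends only on the fixed weights from \hyperlink{hyp:initial}{S1} and \hyperlink{hyp:wmin}{S3}. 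Your route differs from the paper's: the paper substitutes $x=\gamma s$, introduces the auxiliary function $B(x)=e^{x}(\tfrac{w}{w_\infty})^{e^{-x}}-e^{x}(\tfrac{w}{w_\infty})^{e^{-\gamma(t+\delta/2)}}$, proves $B'<0$ so that the supremum over $s\in[0,t]$ is attained at $s=t$, and then bounds $w_+(t)-w_+(t+\delta/2)$ from below by $\tfrac{\delta}{2}|w_+'(t+\delta/2)|$ using the monotonicity of $|w_+'|$; it treats only the $w_+$ term explicitly and declares the $w_-$ term analogous. Your argument avoids locating the maximizer altogether by bounding each $s$ uniformly with a single convexity inequality, handles both terms symmetrically, and produces a fully explicit constant; the paper's version, in exchange, identifies exactly where the supremum sits, which is extra information but is not needed for the lemma. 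Either proof is acceptable; yours is arguably the more economical one.
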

\begin{proof}
We first remark that
\[\sup_{s\in[0,t]}\frac{e^{-\gamma  s} }{w_{+}(s)-w_{+}(t+\delta/2)}=\frac{w^2}{w_\infty}\sup_{x\in [0,\gamma   t]}\frac{1}{B(x)},\]
where
\[B(x)=e^x(\tfrac{w}{w_\infty})^{e^{-x}}-e^x(\tfrac{w}{w_\infty})^{e^{-\gamma (t+\delta/2)}}.\]
We now claim that $B$ is decreasing on $[0,\gamma   t]$, so that its infimum is attained at $\gamma  t$. To see this, we compute the derivative 
% \[D(x)=\tfrac{\d }{\d x}e^x\Big((\tfrac{a}{w_\infty})^{e^{-x}}-(\tfrac{a}{w_\infty})^{e^{-\gamma (t+\delta/2)}}\Big)\] 
\begin{align*}
    B^\prime(x)=e^x\Big((1+\log(\tfrac{w_\infty}{w})e^{-x})(\tfrac{w}{w_\infty})^{e^{-x}}-(\tfrac{w}{w_\infty})^{e^{-\gamma (t+\delta/2)}}\Big)
\end{align*}
and use $1+\log(\tfrac{w_\infty}{w})e^{-x}=1+\log((\tfrac{w_\infty}{w})^{e^{-x}})\leq (\tfrac{w_\infty}{w})^{e^{-x}}$ to find
\begin{align*}
    B^\prime(x)\leq e^x\Big(1-(\tfrac{w}{w_\infty})^{e^{-\gamma (t+\delta/2)}}\Big)<0,
\end{align*}
since $\tfrac{w}{w_\infty}>1$. 

Using our claim, we may now estimate
\begin{align*}
    w_+(t)-w_+(t+\delta/2)\geq \tfrac{\delta}{2} |w_+^\prime (t+\delta/2)|
    =\tfrac{\delta}{2} \gamma  \log(\tfrac{w_{\mathrm{min}}}{w}) (\tfrac{w_\infty}{w})^{1-e^{-\gamma  (t+\delta/2)}}e^{-\gamma (t+\delta/2)} ,
\end{align*}
which yields
\begin{align*}
    \frac{e^{-\gamma  t} }{w_+(t)-w_+(t+\delta/2)}\leq2\frac{(\tfrac{w_{\mathrm{min}}}{w})}{\log(\tfrac{w_{\mathrm{min}}}{w})}\frac{ e^{\gamma \delta/2} }{\gamma   \delta }.
\end{align*}
A similar bound for the remaining term involving $w_-$ can be established analogously, completing the proof.
\end{proof}
\bibliographystyle{plain}
\bibliography{references}
\end{document}